\makeatletter \@addtoreset{equation}{section} \makeatother
\renewcommand\thefigure{\thesection.\@arabic\c@figure}
\renewcommand\thetable{\thesection.\@arabic\c@table}
\newtheorem{theorem}{Theorem}[section]
\newtheorem{lemma}[theorem]{Lemma}
\theoremstyle{remark}
\newcommand{\mc}[1]{{\mathscr #1}}
\newcommand{\bb}[1]{{\mathbb #1}}
\newcommand{\CC}{\mathbb{C}}
\newcommand{\NN}{\mathbb{N}}
\newcommand{\RR}{\mathbb{R}}
\newcommand{\ZZ}{\mathbb{Z}}
\let\ve=\varepsilon
\let\ve=\varepsilon
\newcommand{\sfrac}[2]{{\smash{\frac{#1}{#2}}}}
\newcommand{\tnorm}{\vert \kern -1pt\vert\kern -1pt\vert}
\newcommand{\plus}{\!+\!}
\newcommand{\minus}{\!-\!}
\begin{document}
\title[Weakly Harmonic Chain] {Weakly harmonic oscillators perturbed by a conservative noise}

\author{C\'edric Bernardin}
\address{Universit\'e C\^ote d'Azur, CNRS, LJAD\\
Parc Valrose\\
06108 NICE Cedex 02, France}
\email{{\tt cbernard@unice.fr}}

\author{Patr\'{\i}cia Gon\c{c}alves}
\address{\noindent Center for Mathematical Analysis,  Geometry and Dynamical Systems \\
Instituto Superior T\'ecnico, Universidade de Lisboa\\
Av. Rovisco Pais, 1049-001 Lisboa, Portugal}
\email{patricia.goncalves@math.tecnico.ulisboa.pt}

\author{ Milton Jara}
\address{\noindent Instituto de Matem\'atica Pura e Aplicada\\ Estrada Dona Castorina 110\\ 22460-320 Rio De Janeiro, Brazil.}
\email{mjara@impa.br}

\begin{abstract}
We consider a chain of weakly harmonic coupled oscillators perturbed by a conservative noise. We show that by tuning accordingly the coupling constant energy can diffuse like a Brownian motion or superdiffuse like a maximally $3/2$-stable asymmetric L\'evy process. For a critical value of the coupling, the energy diffusion is described by a family of L\'evy processes which interpolates between these two processes.
\end{abstract}

\maketitle

\section{Introduction}

The problem of anomalous diffusion of energy in low dimensions has been the subject of intense research among last years, see \cite{LNP Lepri} for a recent review. In \cite{BBO2}, Hamiltonian chains of oscillators perturbed by conservative noise were introduced as a mathematically tractable model for energy superdiffusion. From the study of these models \cite{BGJ, JKO2, BBO2, BG14, KO}, the relevance of the different conservation laws in the origin of anomalous diffusion has started to be mathematically understood. These results have served as an inspiration and match perfectly the predictions of Spohn's fluctuating hydrodynamics theory \cite{S}, which allows to make precise conjectures on the decay of correlations of these chains. Fluctuating hydrodynamics predicts several universality classes for the behaviour of energy correlations in these chains, and a natural question is to understand how these different universality classes are related. In particular, we focus on the derivation of {\em crossovers} between different universality classes. We say that a family of equations, parametrized by some variable $\gamma$ is a {\em crossover} between two universality classes if the equations governing these universality classes are recovered taking the limits $\gamma \to 0$ and $\gamma \to \infty$. 

In \cite{BGJ}, \cite{JKO2}, it was proved that the scaling limit of the energy fluctuations of stochastic harmonic chains with at least two conserved quantities is governed by a fractional heat equation
\[
\partial_t u = {\mc L} u
\]
where ${\mc L}$ is the generator of a $3/2$-stable L\'evy process.  Notice that there is some freedom in the choice of ${\mc L}$, which corresponds to the {\em skewness} of the corresponding L\'evy process. The skewness of this operator actually depends on the sound velocity of the {\em mechanical} modes of the chain: volume in the case of \cite{BGJ} and momentum and stretch in the case of the model considered in \cite{JKO2}. These results correspond to the {\em zero-tension} universality class in the fluctuating hydrodynamics framework \cite{S,SS}.

When energy is the only conserved quantity in the chain, energy has normal diffusion, and energy fluctuations are governed by the usual heat equation, corresponding to the Edwards-Wilkinson universality class
\[
\partial_t u = \Delta u.
\]

In \cite{BGJSS,BGJS}, a crossover between these two universality classes was obtained by introducing a noise of vanishing intensity that destroys the conservation of volume. Tuning this intensity in a proper way, it was shown that energy fluctuations are governed by the evolution equation
\[
\partial_t u = \mc L_\gamma u,
\]
where $\mc L_\gamma$ is a non-local operator satisfying
\[
\lim_{\gamma \to 0} \mc L_\gamma = {\mc L},  \quad \lim_{\gamma \to \infty} \sqrt{\gamma} \mc L_\gamma = \Delta.
\]

In this article we take a different route, which we believe is more natural and provides a different interpolating operator. The stochastic chains introduced in \cite{BBO2,BS} are generated by an operator of the form $L_\kappa = \kappa A + S$, where  $A$ is the generator of the deterministic (Hamiltonian) part of the dynamics and $S$ is the generator of the stochastic part of the dynamics. The operator $A$ is antisymmetric with respect to the Gibbs measures associated to the chain, and $S$ is symmetric. If the operator $A$ is absent from the dynamics, namely $\kappa =0$, energy is diffusive. If $\kappa =1$, energy is superdiffusive \cite{BGJ}. Therefore, our task is to find how should $\kappa$ decay in order to observe an evolution different from the cases $\kappa=0$ and $\kappa=1$. 

This situation is very reminiscent of what happens in the so-called {\em weakly asymmetric}, one-dimensional exclusion process. The relevant quantity there is the current of particles through the origin. In the asymmetric case, it has been proved \cite{Joh} that the current of particles through the origin converges to the celebrated Tracy-Widom law. In the symmetric case, fluctuations are Gaussian \cite{RosVar,JarLan}. The crossover regime appears for $\kappa := \kappa_n= \frac{\gamma}{n^{1/2}}$, where $\tfrac{1}{n} \to 0$ is the space scaling, and it has been described in \cite{SaSp, ACQ}. In a more elaborated development, the {\em KPZ equation} serves as a crossover equation between the Edwards-Wilkinson universality class and the KPZ universality class. 

In our situation, since the macroscopic model is linear, fluctuations are always Gaussian, but the covariance structure changes drastically with $\kappa$. It turns out that the crossover scaling is $\kappa = \frac{\gamma}{n^{1/3}}$, and the crossover operator is given by
\[
\bb L_{\gamma,1/3} = \Delta + \sqrt{\gamma} {\mc L}. 
\]
It is clear that $\bb L_{0,1/3} = \Delta$ and
\[
\lim_{\gamma \to \infty} \tfrac{1}{\sqrt{\gamma}} \bb L_{\gamma,1/3} ={\mc L},
\]
which is the operator governing the energy fluctuations for $\kappa =1$.

Notice that the scaling $\kappa = \frac{\gamma}{n^{1/3}}$ differs from the crossover scaling of the weakly asymmetric exclusion process. In order to explain this scaling, we need to describe the results in \cite{BGJ} in more detail. Recall that the stochastic chains considered here have two conserved quantities: the {\em energy} and the so-called {\em volume}. It turns out that the volume serves as a fast variable for the evolution of the energy. Let us take the chain generated by the operator {$L_{\kappa,\lambda} = \kappa A  + \lambda S$}. By writing in a subdiffusive time-scale $t n^{3/2}$ the time evolution of the space-time energy-energy correlation $u_{t} (x)$ and of the space-time  energy-volume correlation $\psi_t(x,y)$, it can be shown that they are well approximated by the solution of 
\begin{equation}
\label{eq:approximative}
\begin{cases}
\; \partial_t u_t(x) = -2\lambda \partial_y^2 \psi_t(x,0)+ \tfrac{\lambda}{\sqrt n} \partial_x^2 u_t(x),\\
\; -\kappa \partial_x \psi_t(x,y) + \lambda \partial_y^2 \psi_t(x,y) = 0,\\
\; 4\lambda \partial_y \psi_t(x,0) = -\kappa \partial_x u_t(x).\\
\end{cases}
\end{equation}
Notice the presence of the correction term $\tfrac{\lambda}{\sqrt n} \partial_x^2 u$. This coupled system can be solved in $\psi$, giving an effective equation for $u_t$:
\[
\partial_t u_t(x) = \tfrac{\kappa^{3/2}}{\lambda^{1/2}} \mc L u_t(x) + \tfrac{\lambda}{\sqrt n} \partial_x^2 u_t(x).
\]
If we choose $\kappa = \lambda^{1/3}$, the first operator does not change with $\lambda$. We conclude that for $\lambda \ll \sqrt n$, the Laplacian part of this equation vanishes in the scaling limit, while for $\lambda = {c}{\sqrt n}$, $c>0$, the Laplacian has a finite, non-trivial contribution in the limit. One can check that $\lambda ={c}{\sqrt n}$ in the subdiffusive time scale $t n^{3/2}$ corresponds, in the diffusive time scale $tn^2$, to the choice $\kappa = \frac{\gamma}{n^{1/3}}$ mentioned above with $\gamma=c^{1/3}$ since $n^{3/2} \kappa= n^{3/2} \lambda^{1/3} = n^2 \, \tfrac{c^{1/3}}{ n^{1/3}}$. 

The fact that (\ref{eq:approximative}) is a good approximation is based on the approach initiated in \cite{BGJ}. However to control the error terms appearing in this approximation we have to be more clever than in \cite{BGJ}. Indeed, one of them can not be estimated by a {\textit{static}} estimate and is controlled by a {\textit{dynamical}} argument  (see the discussion after Lemma \ref{l3}). 

The paper is organized as follows. In Section \ref{sec:model-results} we describe the model and the main result. The proof is  given in Section \ref{sec:proofth} and the technical computations appear in the Appendix.

\section{The model}
\label{sec:model-results}
\subsection{Description of the model}

For $\eta: \bb Z \to \bb R$ and $\alpha >0$, define
\begin{equation}
\tnorm \eta \tnorm_{\alpha} = \sum_{x \in \bb Z} \big| \eta(x) \big| e^{-\alpha |x|}
\end{equation}
and let $\Omega_\alpha = \{ \eta: \bb Z \to \bb R; \tnorm \eta \tnorm_\alpha < +\infty\}$. The normed space $(\Omega_\alpha, \tnorm \cdot \tnorm)$ turns out to be a Banach space. In $\Omega_\alpha$ we consider the system of coupled ODE's
\begin{equation}
\label{ODE}
\tfrac{d}{dt} \tilde{\eta}_t(x) = \kappa \big[ {\tilde \eta}_t(x+1) - \tilde{\eta}_t(x-1)\big] \text{ for } t \geq 0 \text{ and } x \in \bb Z
\end{equation}
where $\kappa>0$ is a constant.The Picard-Lindel\"of Theorem shows that the system \eqref{ODE} is well posed in $\Omega_\alpha$. We will superpose to this deterministic dynamics a stochastic dynamics as follows. To each bond $\{x,x+1\}$, with $x \in \bb Z$ we associate an exponential clock of rate one. Those clocks are independent among them. Each time the clock associated to $\{x,x+1\}$ rings, we exchange the values of $\tilde \eta_t(x)$ and $\tilde \eta_t(x+1)$. Since there is an infinite number of such clocks, the existence of this dynamics needs to be justified. If we freeze the clocks associated to bonds not contained in $\{-M,\dots,M\}$, the dynamics is easy to define, since it corresponds to a piecewise deterministic Markov process. It can be shown that for an initial data $\eta_0$ in
\begin{equation}
\Omega = \bigcap_{\alpha>0} \Omega_\alpha,
\end{equation}
these piecewise deterministic processes stay at $\Omega$ and they converge to a well-defined Markov process $\{\eta_t; t\geq 0\}$, as $M \to \infty$, see \cite{BS} and references therein. This Markov process is the rigorous version of the dynamics described above. Notice that $\Omega$ is a complete metric space with respect to the distance
\begin{equation}
d(\eta,\xi) = \sum_{\ell \in \bb N} \frac{1}{2^\ell} \min\{ 1, \tnorm \eta-\xi\tnorm_{\frac{1}{\ell}}\}.
\end{equation}
Let us describe the generator of the process $\{\eta_t; t\geq 0\}$. For $x,y \in \bb Z$ and $\eta \in \Omega$ we define $\eta^{x,y} \in \Omega$ as
\begin{equation}
\eta^{x,y}(z)=
\begin{cases}
\eta(y); & z=x\\
\eta(x); &z=y\\
\eta(z); &z \neq x,y.
\end{cases}
\end{equation}
We say that a function $f: \Omega \to \bb R$ is \textit{ local } if there exists a finite set $B \subseteq \bb Z$ such that $f(\eta) = f(\xi)$ whenever $\eta(x) = \xi(x)$ for any $x \in B$. For a smooth function $f: \Omega \to \bb R$ we denote by $\partial_x f: \Omega \to \bb R$ its partial derivative with respect to $\eta(x)$. For a function $f: \Omega \to \bb R$ that is local, smooth and bounded, we define $L_\kappa  f: \Omega \to \bb R$ as $L_{\kappa} f = S f + \kappa A f$, where for any $\eta \in \Omega$,
\begin{equation}
S f(\eta)  = \sum_{x \in \bb Z} \big(f(\eta^{x,x+1}) - f(\eta)\big),
\end{equation}
\begin{equation}
Af(\eta) = \sum_{x \in \bb Z} \big( \eta (x+1) -\eta (x-1) \big)\, \partial_x f (\eta).
\end{equation}

The process $\{\eta_t; t\geq 0 \}$ has a family $\{\mu_{\rho,\beta}; \rho \in \bb R, \beta >0\}$ of invariant measures given by
\begin{equation}
\mu_{\rho,\beta}(d\eta) = \prod_{x \in \bb Z} \sqrt{\tfrac{\beta}{2 \pi}} \exp\big\{ - \tfrac{\beta}{2}\big(\eta(x) - \rho \big)^2 \big\} d\eta(x).
\end{equation}
It also has two conserved quantities. If one of the numbers
\begin{equation}
\sum_{x \in \bb Z} \eta_0(x) , \quad \sum_{x \in \bb Z} \eta_0(x)^2
\end{equation}
is finite, then its value is preserved by the evolution of $\{\eta_t; t \geq 0\}$. Following \cite{BS}, we will call these conserved quantities \textit{volume} and \textit{energy}, {respectively}. Notice that $\int \eta(x) d\mu_{\rho,\beta} = \rho$ and $\int\eta(x)^2 d\mu_{\rho,\beta} = \rho^2 + \frac{1}{\beta}$.

\subsection{Description of the result}
Fix $\rho \in \bb R$ and $\beta>0$, and consider the process $\{\eta_t; t \geq 0\}$ with initial distribution $\mu_{\rho,\beta}$. Notice that $\{\eta_t+\lambda; t \geq 0\}$ has the same distribution of the process {$\{\eta_t; t \geq 0\}$} with initial measure $\mu_{\rho+\lambda,\beta}$. Therefore, we can assume, without loss of generality, that $\rho = 0$. We will write $\mu_\beta= \mu_{0,\beta}$ and we will denote by $\bb P$ the law of $\{\eta_t; t \geq 0\}$ and by $\bb E$ the expectation with respect to $\bb P$. The \textit{ energy correlation function} $\{S_t(x); x \in \bb Z, t \geq 0\}$ is defined as
\begin{equation}
S_t(x) = \tfrac{\beta^2}{2} \,  \bb E\big[\big(\eta_0(0)^2-\tfrac{1}{\beta} \big) \big( \eta_t(x)^2 -\tfrac{1}{\beta} \big)\big]
\end{equation}
for any $x \in \bb Z$ and any $t \geq 0$. The constant $\frac{\beta^2}{2}$ is just the inverse of the variance of $\eta(x)^2-\frac{1}{\beta}$ under $\mu_\beta$. By translation invariance of the dynamics and the initial distribution $\mu_\beta$, we see that
\begin{equation}
\tfrac{\beta^2}{2}\,  \bb E\big[ \big(\eta_0(x)^2 -\tfrac{1}{\beta} \big) \big(\eta_t(y)^2 -\tfrac{1}{\beta}\big)\big]=S_t(y-x)
\end{equation}
for any $x, y \in \bb Z$. The following result was proved{\footnote{In \cite{BGJ}, $\kappa=1$ was assumed for simplicity but it is straightforward to extend the results there to cover the case $\kappa \ne 1$.}

\begin{theorem}[\cite{BGJ}]
\label{t1}
Assume $\kappa>0$. Let $f,g: \bb R \to \bb R$ be smooth functions of compact support. Then,
\begin{equation}
\label{ec1.120}
\lim_{n \to \infty} \tfrac{1}{n}\sum_{x, y  \in \bb Z} f\big( \tfrac{x}{n} \big) g\big( \tfrac{y}{n}\big) S_{tn^{3/2}}(x-y) = \iint f(x)g(y) P_t(x-y) dx dy,
\end{equation}
where $\{P_t(x); x \in \bb R, t \geq 0\}$ is the fundamental solution of the fractional heat equation
\begin{equation}
\label{ec1.1300}
\partial_t u =  -\sqrt{\cfrac{\kappa}{2}} \; \big\{ (-\Delta)^{3/4} - \nabla (-\Delta)^{1/4}\big\} u.
\end{equation}
\end{theorem}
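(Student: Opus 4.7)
I follow the correlation-function approach sketched in the introduction. Because the microscopic dynamics is linear in $\eta$ and $\mu_\beta$ is Gaussian and invariant, the evolution of quadratic observables under $L_\kappa$ forms a closed linear system. Set $\xi(z) := \eta(z)^2 - \tfrac{1}{\beta}$ and define
\[
H_t(x,y) := \bb E\big[\xi_0(0)\, \eta_t(x)\eta_t(y)\big],
\]
so that $S_t(x) = \tfrac{\beta^2}{2}\, H_t(x,x)$. It is therefore enough to analyse the symmetric kernel $H_t$ and pass to the limit along the diagonal $x=y$.

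\medskip

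\noindent\textbf{Step 1 (closed equation for $H_t$).} A direct computation shows that $L_\kappa(\eta(x)\eta(y))$ is again a quadratic polynomial in $\eta$. For $|x-y|\ge 2$ one obtains
\[
L_\kappa\bigl(\eta(x)\eta(y)\bigr) = \bigl(\Delta_x + \Delta_y\bigr)\bigl(\eta(x)\eta(y)\bigr) + \kappa\bigl(\nabla^s_x + \nabla^s_y\bigr)\bigl(\eta(x)\eta(y)\bigr),
\]
where $\Delta$ is the discrete Laplacian and $\nabla^s f(x) := f(x+1)-f(x-1)$. At the diagonal $x=y$ and at nearest neighbours $|x-y|=1$, additional terms appear because the exchange on the bond $\{x,x+1\}$ preserves $\eta(x)\eta(x+1)$ while modifying $\eta(x)^2+\eta(x+1)^2$; these produce an effective boundary/jump condition for $H_t$ along the diagonal.

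\medskip

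\noindent\textbf{Step 2 (fast--slow decomposition).} Writing $u_t(x) = H_t(x,x)$ and $\psi_t(x,y) = H_t(x, x+y)$ for $y\ne 0$, one obtains a coupled linear system in which the $\psi$-equation combines a transport in $x$ of intensity $\kappa$ with a Laplacian in $y$, while the $u$-equation is driven by the trace of $\psi$ along $y=0$. After rescaling $x\mapsto x/n$ and $t\mapsto tn^{3/2}$, the variable $\psi$ equilibrates faster than $u$, so that in the limit $\psi_t(x,\cdot)$ should solve the stationary elliptic problem displayed in \eqref{eq:approximative} (with the noise intensity set to $1$). This problem can be solved explicitly by Fourier transform in $x$: its solution involves a square root of $ik$, and substituting the resulting trace back into the equation for $u_t$ yields exactly the non-local operator $\mc L$ of \eqref{ec1.1300}. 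The exponent $3/2$, and hence the time scale $n^{3/2}$, emerges from this $\sqrt{ik}$ symbol.

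\medskip

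\noindent\textbf{Step 3 (justification and main obstacle).} To make these heuristics rigorous one tests $H_t$ against functions of the form $f(x/n)\chi(y)$ with $f \in C^\infty_c(\bb R)$ and $\chi$ compactly supported on $\bb Z$, writes the associated discrete integrated equation, and passes to the limit using tightness together with uniqueness of the limiting fractional heat equation. Most error terms can be absorbed through static, equilibrium-type variance estimates for quadratic observables under $\mu_\beta$. The main difficulty, to which the authors themselves draw attention, is that some error terms are not controllable by a static bound and require a genuinely dynamical estimate: one has to exploit the dissipation produced by the symmetric part $S$ of the generator through an $H_{-1}$-type or resolvent inequality for the non-self-adjoint operator $\kappa A + S$. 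This non-self-adjointness, absent in the purely diffusive case $\kappa=0$, is what makes the analysis strictly more delicate than the standard fluctuation/hydrodynamic treatment, and I expect it to be the crux of the argument.
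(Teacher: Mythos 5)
Your sketch correctly reconstructs the two-tier correlation-function strategy that underlies Theorem~\ref{t1}: close the evolution of the symmetric quadratic kernel $S_t(x,y)$ under $L_\kappa$, identify the fast off-diagonal variable, solve the resulting transport-Laplacian balance in Fourier variables (this is precisely where the $\sqrt{ik}$ symbol and hence the $n^{3/2}$ time scale and the $3/4$-fractional operator appear), and finish by tightness plus uniqueness. This is indeed the strategy of \cite{BGJ}, which the present paper cites without reproving; the introduction's system \eqref{eq:approximative} and the structure of the proof of Theorem~\ref{t3} (fields $\mc S_t^n$, $Q_t^n$, Poisson equation \eqref{Poisson}) are the rigorous incarnation of your Steps 1--2.

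Your Step~3, however, misattributes the ``dynamical'' difficulty. The remark in the introduction about an error term that ``can not be estimated by a static estimate'' refers to the crossover proof of Theorem~\ref{t3} in this paper, and the authors expressly present it as a complication \emph{not} present in \cite{BGJ}. Concretely, in the notation of Section~\ref{sec:proofth}, the problematic term is $\int_0^T Q_t^n(n^{a-2}\widetilde{\mc D}_n h_n)\,dt$, and the paper notes after Lemma~\ref{lem:pasteque} that the bare Cauchy--Schwarz plus $\ell^2$ bound already suffices when $a<2$, i.e. when $b<1/3$. Theorem~\ref{t1} is the case $b=0$, $a=3/2$, so for that theorem the static $L^2$ estimates on the Poisson solution (your ``equilibrium-type variance estimates'') do close the argument, and no extra dynamical input is needed. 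Moreover, what the authors call a ``dynamical argument'' in the crossover regime is not an $H_{-1}$-resolvent inequality for $\kappa A+S$; it is an iteration of the same Poisson-equation closure applied to the residual field (Lemmas~\ref{lem:pasteque}, \ref{lem:new_da}, with $v_n$ and $w_n$ solving \eqref{eq:poisson-v} and \eqref{eq:poisson-w}). A genuine $H_{-1}$-norm argument from \cite{BG14} does appear in the paper, but only for the range $b>1/2$, which again is not the regime of Theorem~\ref{t1}. So the substance of your proof is right, but you should drop the claim that a non-self-adjoint resolvent/$H_{-1}$ estimate is the crux of the BGJ proof: the crux there is the explicit Fourier solution of the Poisson equation and the $|y|^{-3/2}$ asymptotics of the associated symbol, all controlled by static estimates.
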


It is not difficult to check that if $\kappa=0$ then the following result holds. 
\begin{theorem}
\label{t2}
Assume $\kappa=0$. Let $f,g: \bb R \to \bb R$ be smooth functions of compact support. Then,
\begin{equation}
\label{ec1.121}
\lim_{n \to \infty} \tfrac{1}{n}\sum_{x, y  \in \bb Z} f\big( \tfrac{x}{n} \big) g\big( \tfrac{y}{n}\big) S_{tn^{2}}(x-y) = \iint f(x)g(y) P_t(x-y) dx dy,
\end{equation}
where $\{P_t(x); x \in \bb R, t \geq 0\}$ is the fundamental solution of the heat equation
\begin{equation}
\label{ec1.1311}
\partial_t u = \Delta u.
\end{equation}
\end{theorem}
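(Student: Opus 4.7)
The plan is to exploit the fact that when $\kappa = 0$ the dynamics is pure symmetric exchange, so that the evolution of $S_t$ decouples from any higher-order correlation and reduces to a discrete heat equation that can be solved explicitly.

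First, I would compute the action of $S$ on $\eta(x)^2$. Only the two exchanges across the bonds $\{x-1,x\}$ and $\{x,x+1\}$ alter $\eta(x)^2$, so
\[
S(\eta(x)^2 - \tfrac{1}{\beta}) = \big(\eta(x+1)^2 - \tfrac{1}{\beta}\big) + \big(\eta(x-1)^2 - \tfrac{1}{\beta}\big) - 2 \big(\eta(x)^2 - \tfrac{1}{\beta}\big).
\]
Combining this with the semigroup identity $\tfrac{d}{dt}\bb E[f(\eta_0) g(\eta_t)] = \bb E[f(\eta_0) S g(\eta_t)]$ yields the closed equation $\partial_t S_t(x) = S_t(x+1) + S_t(x-1) - 2 S_t(x)$. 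The product structure of $\mu_\beta$ together with $\Var_{\mu_\beta}(\eta(0)^2 - \tfrac{1}{\beta}) = 2/\beta^2$ (which is exactly why the prefactor $\beta^2/2$ appears in the definition of $S_t$) gives the initial condition $S_0(x) = \mathbf{1}_{\{x=0\}}$. Hence $S_t(x) = p_t(x)$ is the transition probability at time $t$ of the continuous-time nearest-neighbor random walk on $\bb Z$ with jump rate one in each direction, started at the origin.

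Second, I would pass to the diffusive scaling. Rewriting
\[
\tfrac{1}{n} \sum_{x,y \in \bb Z} f\big(\tfrac{x}{n}\big) g\big(\tfrac{y}{n}\big) S_{tn^2}(x-y) = \tfrac{1}{n^2} \sum_{x,y \in \bb Z} f\big(\tfrac{x}{n}\big) g\big(\tfrac{y}{n}\big) \cdot n\, p_{tn^2}(x-y),
\]
the local central limit theorem for the continuous-time symmetric walk delivers the pointwise convergence $n\, p_{tn^2}(\lfloor n u \rfloor) \to P_t(u) = (4\pi t)^{-1/2} e^{-u^2/(4t)}$ together with a uniform Gaussian upper bound; since $f$ and $g$ are smooth and compactly supported, dominated convergence identifies the limit with $\iint f(u) g(v) P_t(u-v) \, du \, dv$, which is the right-hand side of \eqref{ec1.121}. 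The only mildly delicate step is the uniformity in the local CLT, which is classical for the nearest-neighbor continuous-time symmetric walk; in particular, none of the sophisticated machinery invoked for Theorem \ref{t1} (such as the coupled system \eqref{eq:approximative}) is required, consistent with the paper's remark that this case is ``not difficult to check''.
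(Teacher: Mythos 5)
Your proof is correct, and since the paper itself offers no proof of Theorem~\ref{t2} (stating only that the case $\kappa=0$ is ``not difficult to check''), your argument supplies precisely the elementary verification the authors had in mind: with $\kappa=0$ the generator reduces to $S$, the energy two-point function closes into the discrete heat equation $\partial_t S_t(x) = \Delta S_t(x)$ with $S_0 = \delta_0$ (the prefactor $\beta^2/2$ normalising the Gaussian variance $\Var_{\mu_\beta}(\eta(0)^2) = 2/\beta^2$), so $S_t$ is the rate-one nearest-neighbour random-walk kernel and the local CLT under diffusive scaling gives the heat kernel $P_t(u) = (4\pi t)^{-1/2} e^{-u^2/(4t)}$. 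This is also consistent with the paper's own framework: in equation \eqref{ec3.10} the coupling term $-2\gamma n^{a-b-3/2} Q_t^n(\nabla_n f \otimes \delta)$ vanishes when $\kappa=0$, leaving exactly the closed discrete heat equation for $\mc S_t^n(f)$ at scale $a=2$ that you solve directly.
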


\textcolor{black}{We note that the previous results are obtained by looking at the system in different time scales: either in a superdiffusive time scale $tn^{3/2}$ or in the diffusive time scale $tn^2$.}
Our aim is now to investigate a crossover between these two regimes by letting $\kappa$ to be arbitrary small. To this end, we introduce a large parameter $n \in \NN$ and take 
\begin{equation}
\textcolor{black}{\kappa:=\kappa_n=\frac{\gamma}{n^b}}
\end{equation}
where $\gamma>0$ is fixed and $b\in (0,+\infty)$. Observe that the two previous theorems describe, respectively, the limiting cases $b=0$ and $b=+\infty$. The main result of this paper is the following theorem.

\begin{theorem}
\label{t3}
Assume $\kappa_n= \tfrac{\gamma}{n^b}$ where $b\ge 0$ and $\gamma>0$. We define the exponent $a$ of the time scale by $a=\inf ( 3/2- 3b/2, 2)$. Let $f,g: \bb R \to \bb R$ be smooth functions of compact support. Then,
\begin{equation}
\label{ec1.122}
\lim_{n \to \infty} \tfrac{1}{n}\sum_{x, y  \in \bb Z} f\big( \tfrac{x}{n} \big) g\big( \tfrac{y}{n}\big) S_{tn^{a}}(x-y) = \iint f(x)g(y) P^{\gamma,b}_t (x-y) dx dy,
\end{equation}
where $\{P^{\gamma,b}_t(x); x \in \bb R, t \geq 0\}$ is the fundamental solution of the equation
\begin{equation}
\label{ec1.1320}
\partial_t u = {\bb L}_{\gamma,b} u
\end{equation}
with ${\bb L}_{\gamma,b}$ \textcolor{black}{being} the generator of the following \textcolor{black}{L\' evy} process
\begin{equation}
\label{eq:Levy356}
{\bb L}_{\gamma, b} = {\bf 1}_{b \ge 1/3}\; \Delta + \sqrt{\gamma} \, {\bf 1}_{b \le 1/3}  \left[ - \tfrac{1}{\sqrt 2} \; \big\{ (-\Delta)^{3/4} - \nabla (-\Delta)^{1/4}\big\} \right].
\end{equation}
\end{theorem}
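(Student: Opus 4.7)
The plan is to follow the Wick-chaos approach of \cite{BGJ}, adapted to the present weakly-coupled regime. Since $L_\kappa=S+\kappa A$ preserves the Hermite degree of polynomial observables, $S_t(x)$ is governed by the restriction of $L_\kappa$ to the second Wick chaos $\mc H_2\subset L^2(\mu_\beta)$. A convenient basis of $\mc H_2$ consists of the diagonal products $\eta(x)^2-\tfrac{1}{\beta}$ together with the off-diagonal products $\eta(x)\eta(y)$ for $x\neq y$. Applying the map $F\mapsto\tfrac{\beta^2}{2}\bb E[(\eta_0(0)^2-\tfrac{1}{\beta})F(\eta_t)]$ to these two families and computing the action of $L_\kappa$ produces a closed linear system for the diagonal correlation $u_t(x)=S_t(x)$ and the off-diagonal correlation $\psi_t(x,y)$, whose formal continuum limit under the rescaling $t\mapsto tn^a$, $x\mapsto x/n$ is exactly the coupled problem \eqref{eq:approximative}.

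Next, I would solve \eqref{eq:approximative} for $u_t$ alone, treating $\psi_t$ as quasi-stationary. Fourier transforming in $x$, the interior equation $-\kappa\partial_x\psi+\lambda\partial_y^2\psi=0$ becomes a second order ODE in $y$ with exponentially decaying solution; the boundary condition $4\lambda\partial_y\psi|_{y=0^\pm}=-\kappa\partial_x u$ fixes the amplitude in terms of $\hat u_t$. Substituting back into the evolution of $u$ yields a closed Fourier-side equation whose symbol is the sum of a $\tfrac{3}{2}$-stable symbol (coming from the boundary term $\partial_y^2\psi|_{y=0}$) and a Laplacian symbol (coming from the corrective $\tfrac{\lambda}{\sqrt n}\partial_x^2 u$). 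Inserting $\kappa_n=\gamma/n^b$ and rescaling time by $n^a$, a straightforward bookkeeping shows that the fractional symbol survives in the limit exactly when $b\leq 1/3$ and the Laplacian symbol exactly when $b\geq 1/3$, with both contributing simultaneously at the critical value $b=1/3$. This identifies the limiting operator with $\bb L_{\gamma,b}$.

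The main technical work is to quantify the replacement of $\psi_t$ by its quasi-stationary profile. Following the scheme of \cite{BGJ}, I would pair $u_t$ and $\psi_t$ against well-chosen test functions $\phi(x/n)$ and $\varphi_n(x/n,y/n)$, the latter being a lattice approximation of the Green's function of the elliptic problem described above, write a Duhamel-type identity in the rescaled time $tn^a$, and estimate the remainder via resolvent bounds for $L_\kappa$ in $L^2(\mu_\beta)$. Compared to the fully coupled case $\kappa=1$ of \cite{BGJ}, a new error term appears due to the corrective Laplacian $\tfrac{1}{\sqrt n}\partial_x^2 u$: at the critical scaling $b=1/3$ this error has the same order as the main term and cannot be dominated by a static $H_{-1}(\mu_\beta)$ bound on the resolvent. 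As indicated in the discussion preceding Lemma~\ref{l3}, I would instead handle it dynamically, integrating in time and closing a Gr\"onwall-type inequality using the dissipation generated by $S$.
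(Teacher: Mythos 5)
Your high-level framework is exactly the paper's: restrict to the second Wick chaos, derive coupled evolution equations for the diagonal (energy) field $\mc S_t^n$ and the off-diagonal (volume) field $Q_t^n$, and close the hierarchy by replacing the volume correlation by the quasi-stationary profile given by a Poisson equation, whose Fourier symbol yields the $3/2$-stable operator and, with the $\tfrac{1}{\sqrt n}\partial_x^2$ correction, the Laplacian. The formal bookkeeping that identifies $b=1/3$ as critical is also as in the paper's introduction. So far so good.

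The gap is in the remainder analysis, and it is a real one. First, the term that defeats the static $H_{-1}$/$L^2$ bound is not the corrective Laplacian $\tfrac{\lambda}{\sqrt n}\partial_x^2 u$ (that term is benign: it is what survives as $\Delta$ in the limit). It is the discrete boundary term $\int_0^T Q_t^n\bigl(n^{a-2}\widetilde{\mc D}_n h_n\bigr)\,dt$ appearing in \eqref{dec_S}, which is produced by the generator acting on quadratic observables \emph{at the diagonal} $x=y$ (the operator $D_\kappa$ of Appendix~\ref{sec:Abb}) and has no counterpart in the continuum system \eqref{eq:approximative}. Second, the ``dynamical argument'' is not a Gr\"onwall iteration and it is not clear one would close here, since the offending quantity is not pointwise small in $t$ and the dissipation of $S$ does not directly damp it. What the paper actually does is bootstrap the Poisson-equation trick: one introduces $v_n$ solving \eqref{eq:poisson-v} with right-hand side $n^{a-2}\widetilde{\mc D}_n h_n$ and rewrites the time integral via \eqref{ec3.15}; the new remainder $\int_0^T Q_t^n\bigl(n^{a-2}\widetilde{\mc D}_n v_n\bigr)\,dt$ is still not controllable by the a priori bound \eqref{apriori_bound_Q} over the whole range $1/3\le b<1$, so one iterates once more with $w_n$ solving \eqref{eq:poisson-w}. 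Each iteration gains roughly a factor $n^{b-1}$ through the Fourier-side bounds of Lemmas~\ref{lem:in18}, \ref{lem:j_nint}, \ref{lem:K_nint}, and two iterations suffice for all $b<1$. Without this nested-resolvent mechanism, your proposal does not reach the critical window $b\in[1/3,1)$, which is precisely where the crossover lives.
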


The most interesting regime is for $b=1/3$ since the operator ${\bb L}_{\gamma, 1/3}$ is a L\'evy process connecting the Brownian motion to the totally asymmetric $3/2$-stable L\'evy process: 
$${\bb L}_{\gamma,1/3} \underset{\gamma \to 0} {\longrightarrow} \Delta$$
and 
$$\gamma^{-1/2}\, {\bb L}_{\gamma,1/3} \underset{\gamma \to \infty} {\longrightarrow} \left[ - \tfrac{1}{\sqrt 2} \; \big\{ (-\Delta)^{3/4} - \nabla (-\Delta)^{1/4}\big\} \right].$$

\section{Proof of Theorem \ref{t3}}
\label{sec:proofth}

Following the method introduced in \cite{BGJ}, the proof of this theorem will be established by a careful study of the correlation function $\{S_t(x,y); x \neq y \in \bb Z, t \geq 0\}$ given by
\begin{equation}
S_t(x,y) = \tfrac{\beta^2}{2} \bb E \big[ \big(\eta_0(0)^2-\tfrac{1}{\beta} \big) \eta_t(x) \eta_t(y)\big]
\end{equation}
for any $t \geq 0$ and any $x \neq y \in \bb Z$. Notice that this definition makes perfect sense for $x =y$ and, in fact, we have $S_t(x,x) = S_t(x)$. For notational convenience we define $S_t(x,x)$ as equal to $S_t(x)$. However, these quantities are of different nature, since $S_t(x)$ is related to \textit{energy fluctuations} and $S_t(x,y)$ is related to {\textit{volume fluctuations} (for $x \neq y$).

Let $a=\inf( 3/2- 3b/2, 2)$ which fixes the time scale in which we observe the process at. The generator $n^{a} L_{\kappa_n}$ is denoted by ${\mc L}_n$. From now on we assume $\beta=1$, since the general case can be recovered from this particular case by multiplying the process by $\beta^{-1/2}$. 

For $d \ge 1$, denote by $\mc C_c^\infty(\bb R^d)$ the space of infinitely differentiable functions $f:{\bb R}^d \to \bb R$ of compact support. For any function $f \in \mc C_c^\infty(\bb R^d)$, define the \textcolor{black}{discrete}  $\ell^2({\bb Z}^d)$-norm as
\begin{equation}
\|f\|_{2,n} = \sqrt{ \vphantom{H^H_H}\smash{\tfrac{1}{n^d} \sum_{x \in {\bb Z}^d} f\big( \tfrac{x}{n} \big)^2}}.
\end{equation}
\vspace{0pt}

 Let $g \in \mc C_c^\infty(\bb R)$ be a fixed function.
For any $n \in \bb N$,  \textcolor{black}{any}  $t \geq 0$ and any $f \in \mc C_c^\infty(\bb R)$, we define the field $\{\mc S_t^n; t \geq 0\}$ as
\begin{equation}
\mc S_t^n(f) = \tfrac{1}{n}\!\! \sum_{x, y \in \bb Z} g\big(\tfrac{\vphantom{y}x}{n} \big) f\big( \tfrac{y}{n}\big) S_{tn^{a}}(y-x).
\end{equation}
\textcolor{black}{We observe that the previous field is the one which appears at the left hand side of \eqref{ec1.122}.} 
Rearranging terms in a convenient way we have that
\begin{equation}
\mc S_t^n(f) = \tfrac{1}{2} \bb E \Big[ \Big(\tfrac{1}{\sqrt n} \sum_{x \in \bb Z} g\big(\tfrac{x}{n}\big)\big(\eta_0(x)^2-1\big) \Big) \times
	\Big( \tfrac{1}{\sqrt n} \sum_{y \in \bb Z}f\big(\tfrac{y}{n}\big)  \big(\eta_{tn^{3/2}}(y)^2 -1\big)  \Big)\Big].
\end{equation}
By a simple application of the Cauchy-Schwarz inequality we have that
\begin{equation}\label{apriori_bound_S}
|\mc S^n_t(f)|\leq \|g\|_{2,n}\|f\|_{2,n}.
\end{equation}
For a function $h \in \mc C_c^\infty(\bb R^2)$ we define $\{ Q_t^n(h); t \geq 0\}$ as
\begin{equation}
 Q_t^n(h) = \tfrac{1}{2} \bb E \Big[ \Big(\tfrac{1}{\sqrt n} \sum_{x \in \bb Z} g\big(\tfrac{x}{n}\big)\big(\eta_0(x)^2-1\big) \Big) \times
	\Big( \tfrac{1}{ n}\!\! \sum_{y \neq z  \in \bb Z}\!\! h\big(\tfrac{y}{n}, \tfrac{\vphantom{y}z}{n}\big) \eta_{tn^{a}}(y) \eta_{tn^{a}}(z)   \Big)\Big].
\end{equation}
Note that $Q_t^n(h)$ depends only on the symmetric part of the function $h$ and, along this article, we will always assume, without loss of generality, that $h(x,y) = h(y,x)$ for any $x,y \in \bb Z$. We also point out that $Q_t^n(h)$ does not depend on the values of $h$ at the diagonal $\{x=y\}$. Again, by a simple application of the Cauchy-Schwarz inequality we have that
\begin{equation}\label{apriori_bound_Q}
|Q^n_t(h)|\leq 2 \|g\|_{2,n}\|h\|_{2,n}.
\end{equation}

Note that, by a simple computation, whose details are given in Appendix \ref{sec:Abb}, we have that
\begin{equation}
 \label{ec3.10}
\tfrac{d}{dt} \mc S_t^n(f) = -2\gamma n^{a-b -3/2} \, Q_t^n(\nabla_n f \otimes \delta) +  n^{a-2} \, \mc S_t^n(\Delta_n f)
\end{equation}
where for a function $f \in \mc C_c^\infty(\bb R)$,  $\Delta_n f: \bb R \to \bb R$ is a discrete approximation of the second derivative of $f$ given by
\begin{equation}
\Delta_n f \big( \tfrac{x}{n}\big) = n^2\Big( f\big(\tfrac{x\plus 1}{n} \big) + f\big( \tfrac{x\minus 1}{n} \big) - 2 f\big(\tfrac{x}{n}\big) \Big)
\end{equation}
and  $\nabla_n f \otimes \delta : \sfrac{1}{n} \bb Z^2 \to \bb R$ is a discrete approximation of the distribution $f'\!(x) \otimes \delta(x=y)$, where $\delta(x=y)$ is the $\delta$ of Dirac at the line $x=y$ and it is given by
\begin{equation}
\label{eq:3.9}
\big(\nabla_n f \otimes \delta\big) \big( \tfrac{x}{n}, \tfrac{y}{n}\big) =
\begin{cases}
\frac{n^2}{2}\big(f\big(\tfrac{x+1}{n}\big) - f\big(\tfrac{x}{n}\big)\big); & y =x\plus 1\\
\frac{n^2}{2}\big(f\big(\tfrac{x}{n}\big) - f\big(\tfrac{x-1}{n}\big)\big); & y =x\minus 1\\
0; & \text{ otherwise.}
\end{cases}
\end{equation}

At this point we \textcolor{black}{ split the proof of the theorem according to the range of the parameter $b$. First we treat the case $b>1$. For that purpose we  note that putting together (\ref{apriori_bound_Q}) plus  the fact that $\| \nabla_n f \otimes \delta\|_{2,n}=\mc O(\sqrt{n})$,  we get that for  $b>1$ and $a=2$, 
$$\lim_{n \to \infty}  n^{a-b -3/2} \, Q_t^n(\nabla_n f \otimes \delta) =0.$$
This concludes the proof of the theorem for $b>1$, since the equation \eqref{ec3.10} for $\mc S_t^n(f)$ is now closed and a simple tightness argument gives the result. Now we note that applying the  $H_{-1}$-norm argument as in the proof of Theorem 4 in \cite{BG14} we get that for $b>1/2$, the previous result still holds. We do not present this proof  here since it is a reproduction of the arguments of Theorem 4 of  \cite{BG14} but we ask the interested reader to look particularly  at the last inequality of the proof of that theorem  and to note that the time scaling for this range of $b$ is $a=2$. At this point we still need to analyze the remaining cases where  $b\leq 1/2$. 
We also point out that 
the previous arguments do not use the asymmetric part of the dynamics in order to control the problematic term $ n^{a-b -3/2} \, Q_t^n(\nabla_n f \otimes \delta)$. The understanding of the effect of the asymmetric part is crucial to cover the case $b\leq 1/2$. From now on, we assume  that this is the case (in fact the rest of the argument is valid for $b<1$ but not for $b=1$). } Since, from \eqref{ec3.10} the time evolution of $\mc S^n_t$ depends on the time evolution of $Q^n_t$, we need also to find an equation similar to \eqref{ec3.10} for $Q^n_t$, in order to close the equation for $\mc S_t^n.$ \textcolor{black}{ We note that this argument has already been used in \cite{BGJ} when treating the case corresponding to  $b=0$. } By a simple computation, whose details are given in Appendix \ref{sec:Abb}, we have that
\begin{align}
\label{ec3.15}
\tfrac{d}{dt} Q_t^n(h) &= Q_t^n\big(n^{a-2}\Delta_n h+ \gamma n^{a-b-1}\mc A_n h\big) -2 \gamma n^{a-b-3/2} \mc S_t^n\big( \mc D_n h\big) \\
&+ 2 Q_t^n\big( n^{a-2} \widetilde{\mc D}_n h \big), 
\end{align}
where for $h \in \mc C_c^\infty(\bb R^2)$ the operator  $\Delta_n h : \bb R^2 \to \bb R$ is a  discrete approximation of the \textcolor{black}{$2$}-dimensional Laplacian of $h$  and it is given by
\begin{equation}
\Delta_n h\big( \tfrac{\vphantom{y}x}{n}, \tfrac{y}{n}\big) = n^2\Big( h\big( \tfrac{\vphantom{y}x+1}{n}, \tfrac{y}{n}\big)+h\big( \tfrac{\vphantom{y}x-1}{n}, \tfrac{y}{n}\big)+h\big( \tfrac{\vphantom{y}x}{n}, \tfrac{y+1}{n}\big)+ h\big( \tfrac{\vphantom{y}x}{n}, \tfrac{y-1}{n}\big) - 4 h\big( \tfrac{\vphantom{y}x}{n}, \tfrac{y}{n}\big)\Big),
\end{equation}
 $\mc A_n h: \bb R \to \bb R$ is a  discrete approximation of the directional derivative $(-2,-2) \cdot \nabla h$ and is given by
\begin{equation}
\mc A_n h\big( \tfrac{\vphantom{y}x}{n}, \tfrac{y}{n}\big) = n\Big(h\big( \tfrac{\vphantom{y}x}{n}, \tfrac{y-1}{n}\big)+ h\big( \tfrac{\vphantom{y}x-1}{n}, \tfrac{y}{n}\big)- h\big( \tfrac{\vphantom{y}x}{n}, \tfrac{y+1}{n}\big)-h\big( \tfrac{\vphantom{y}x+1}{n}, \tfrac{y}{n}\big)\Big),
\end{equation}
 the operator $\mc D_n h : \sfrac{1}{n} \bb Z \to \bb R$ is a discrete approximation of the directional derivative of $h$ along the diagonal $x=y$ and it is  given by
\begin{equation}
\mc D_n h\big( \tfrac{x}{n} \big) = n \Big( h \big(\tfrac{x}{n}, \tfrac{x+1}{n}\big) - h \big( \tfrac{x-1}{n}, \tfrac{x}{n} \big) \Big)
\end{equation}
and the operator $\widetilde {\mc D}_n$ is defined as follows: $\widetilde {\mc D}_n h$ is a symmetric function from $\tfrac{1}{n} \ZZ^2$ into $\RR$, vanishing outside the upper and lower diagonal, and such that
\begin{equation}
\label{eq:D_tilde}
\widetilde{\mc D}_n h  \; \big(\tfrac{x}{n},\tfrac{x+1}{n}\big)   =n^2 \left[ \widetilde{\mc E}_n h \, \big(\tfrac{x}{n}\big)   -\tfrac{1-\kappa}{2} \; \widetilde{\mc F}_n h\,   \big(\tfrac{x}{n}\big) \right] 
\end{equation}
with
\begin{equation}\label{eq:En}
\widetilde{\mc E}_n h \big(\tfrac{x}{n}\big) = 
 h \big( \tfrac{x}{n} , \tfrac{x+1}{n} \big) -h \big( \tfrac{x}{n} , \tfrac{x}{n} \big)\end{equation}
and
\begin{equation}\label{eq:Fn}
\widetilde{\mc F}_n h (\tfrac{x}{n},\tfrac{y}{n}) =  h \big( \tfrac{x+1}{n} , \tfrac{x+1}{n} \big) -h \big( \tfrac{x}{n} , \tfrac{x}{n} \big).
\end{equation}

In order to combine \eqref{ec3.10} and \eqref{ec3.15} in such a way that we obtain a simple equation for the time evolution of $\mc S_t^n$ we consider  $h_n: \sfrac{1}{n} \bb Z^2 \to \bb R$ as the solution of the \textcolor{black}{Poisson} equation
\begin{equation}
\label{Poisson}
\Delta_n h \big(\tfrac{\vphantom{y}x}{n},\tfrac{y}{n}\big) + \gamma n^{1-b} \mc A_n h \big(\tfrac{\vphantom{y}x}{n},\tfrac{y}{n}\big)  =  2\gamma n^{1/2-b} \, \nabla_n f \otimes \delta \big(\tfrac{\vphantom{y}x}{n},\tfrac{y}{n}\big).
\end{equation}
Note that $h_n$ is independent of $a$. We get that
\begin{equation}
\tfrac{d}{dt} \mc S_t^n(f) = - \tfrac{d}{dt} Q_t^n(h_n) + {\mc S}_t^n \Big( n^{a-2} \Delta_n f -2 \gamma n^{a-b-3/2} {\mc D}_n h_n  \Big) + 2 Q_t^n\big( n^{a-2} \widetilde{\mc D}_n h_n \big).
\end{equation}
By integrating last expression in time we have that for $T>0$,
\begin{equation}
\label{dec_S}
\begin{split}
 \mc S_T^n(f)-  \mc S_0^n(f)=& ~\int_0^T{\mc S}_t^n \Big( n^{a-2} \Delta_n f -2 \gamma n^{a-b-3/2} {\mc D}_n h_n  \Big)\, dt \\&+Q_0^n(h_n) - Q_T^n(h_n) + 2\int_0^T Q_t^n\big( n^{a-2} \widetilde{\mc D}_n h_n \big)\, dt.
\end{split}\end{equation}
Now we need to analyze each term at the right hand side of (\ref{dec_S}). Let us first observe that for any $b \ge 0$
\begin{equation}
\label{eq:l31}
\lim_{n \to \infty}\|h_n\|_{2,n}^2=0.
\end{equation}
This is proved in \textcolor{black}{Appendix \ref{sec:proof_l3_1}}. We note then that by \eqref{apriori_bound_Q} the second and third terms at the right hand side of (\ref{dec_S}) vanish, as $n\to\infty$. 
The contribution of the main term at the right hand side of (\ref{dec_S}) is encapsulated in the following lemma which is proved in \textcolor{black}{ Appendix \ref{sec:proof_l3_2}}.
\begin{lemma}
\label{l3}
Let $f \in \mc C_c^\infty(\bb R)$. If $a=\inf(3/2+3b/2, 2)$ and $b\in(0,+\infty)$, \textcolor{black}{then}
\begin{equation}
\label{eq:l32}
\lim_{n \to \infty} \frac{1}{n} \sum_{x \in \bb Z} \big| \big\{n^{a-2} \Delta_n f -2 \gamma n^{a-b-3/2} {\mc D}_n h_n\big\}\big(\tfrac{x}{n}\big)  -{\bb L}_{\gamma,b} f\big(\tfrac{x}{n}\big) \big|^2 =0
\end{equation}
where ${\bb L}_{\gamma,b}$ is defined in (\ref{eq:Levy356}).
\end{lemma}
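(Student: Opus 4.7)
The plan is to solve the discrete Poisson equation \eqref{Poisson} in Fourier space, extract an explicit formula for $\mc D_n h_n$, and then pass to the limit in $\ell^2$ via Plancherel combined with dominated convergence.

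First, I exploit the fact that $\Delta_n$, $\mc A_n$ and the source $\nabla_n f\otimes\delta$ are all translation invariant along the diagonal direction $(1,1)$. Writing $h_n\bigl(\tfrac{x}{n},\tfrac{y}{n}\bigr)=H_n(x,\,y-x)$ and taking the discrete Fourier transform in $x$, the Poisson equation becomes, for each frequency $k$, a second-order linear recursion in the transverse variable $m=y-x\in\bb Z\setminus\{0\}$. Its constant (in $m$) coefficients depend only on $k/n$, $\gamma$ and $n^{1-b}$, and its source is concentrated at $m=\pm 1$. Requiring $\widehat H_n(k,\cdot)$ to decay as $|m|\to\infty$ selects the stable root $\mu_n(k)$ of the associated characteristic polynomial, and the matching conditions at $m=\pm 1$ then give a fully explicit formula for the amplitudes.

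Second, I plug this formula into $\mc D_n h_n(\tfrac{x}{n})=n[H_n(x,1)-H_n(x-1,1)]$, whose Fourier symbol is $n(1-e^{-ik/n})\widehat H_n(k,1)$. A Taylor expansion of the characteristic polynomial shows that, for fixed $k$, $\mu_n(k)=1-n^{-1/2}r(k)+o(n^{-1/2})$ for an explicit $r(k)$ involving $|k|^{1/2}(1\mp i\,\mathrm{sgn}(k))$, and a short computation yields
\begin{equation*}
-2\gamma\,n^{a-b-3/2}\,\widehat{\mc D_n h_n}(k)\;\longrightarrow\;-\tfrac{\sqrt{\gamma}}{\sqrt 2}\,\mathbf{1}_{b\le 1/3}\,\bigl(|k|^{3/2}-ik|k|^{1/2}\bigr)\widehat f(k),
\end{equation*}
which is exactly the Fourier symbol of the superdiffusive part of $\bb L_{\gamma,b}$. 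The prefactor $n^{a-b-3/2}$ together with the natural scale $n^{1/2}$ arising from $\mu_n(k)$ force a finite nonzero limit precisely when $a=3/2+3b/2$, and consistency with $a\le 2$ selects the regime $b\le 1/3$.

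Third, the symbol of $n^{a-2}\Delta_n f$ at $k$ equals $-n^{a-2}\cdot 2n^2(1-\cos(k/n))\widehat f(k)$, which tends to $-k^2\widehat f(k)=\widehat{\Delta f}(k)$ when $a=2$ and to $0$ when $a<2$; the two cases correspond respectively to $b\ge 1/3$ and $b<1/3$, and reproduce the indicator $\mathbf{1}_{b\ge 1/3}$ in \eqref{eq:Levy356}. Summing the two contributions gives the symbol of $\bb L_{\gamma,b}f$. The convergence \eqref{eq:l32} then follows by Plancherel applied to the symbol difference, with an integrable majorant obtained from the Schwartz decay of $\widehat f$ and from elementary uniform-in-$n$ bounds on $|\mu_n(k)|$ on the Fourier torus. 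The main obstacle I anticipate is the uniform control of the high-frequency modes $|k|\sim n$, where the discrete symbols of $\Delta_n$ and $\mc A_n$ depart substantially from their continuous counterparts and $\mu_n(k)$ must be analyzed directly on the dual torus $[-\pi n,\pi n)$. The delicate cancellation that turns a naive $O(n^{1/2})$ factor into the correct fractional power $|k|^{3/2}$ rests on a fine balance between the discrete Laplacian and the discrete transport operator, and it is precisely this balance that selects the critical exponent $b=1/3$; making this rigorous while keeping uniform control of the error terms is the technical heart of the argument.
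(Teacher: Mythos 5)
Your approach is essentially the paper's, reorganized: the paper Fourier transforms in both coordinates, changes variables $(k,\ell)\mapsto(\xi-\ell,\ell)$, and computes the resulting $\ell$-integral $G_n$ by contour integration. The roots $z_\pm$ of the quadratic $P_w(z)$ in Appendix F are exactly your stable/unstable recursion roots $\mu_n(k)$ and $\mu_n(k)^{-1}$; the residue computation plays the role of your ``matching at $m=\pm1$.'' So the method is sound and the high-frequency control you flag is indeed handled, in the paper, through Lemma \ref{lem:sfp} plus bounds on $G_n$ uniform over the whole torus.

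There is, however, a concrete scaling error in your sketch. The expansion $\mu_n(k)=1-n^{-1/2}r(k)+o(n^{-1/2})$ is correct only for $b=0$. With $\kappa_n=\gamma n^{-b}$, the discriminant (see \eqref{eq:tournevis1}) satisfies, for $y=\xi/n$ fixed,
\[
1-\tfrac{w}{4}a_n^2(w)\;\approx\;\bigl(1+\tfrac{\gamma^2}{n^{2b}}\bigr)\tfrac{\pi^2\xi^2}{n^2}\;-\;i\,\tfrac{2\gamma\pi\xi}{n^{1+b}},
\]
and for $b<1$ the second term dominates, so the stable root satisfies $\mu_n(k)-1\sim n^{-(1+b)/2}$, not $n^{-1/2}$. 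This matters: plugging ``a factor $n^{1/2}$ from $\mu_n$'' against the prefactor $n^{a-b-3/2}$ would balance at $a=1+b$, which contradicts the claimed critical scale $a=3/2+3b/2$. With the correct scale $n^{(1+b)/2}$ and the remaining $n^{-b/2}$ hidden in the amplitude (compare the factor $n^{-b/2}$ appearing explicitly in Lemma \ref{lem:G0345}), the product comes out to $n^{a-b-3/2}\cdot n^{3(1+b)/2-1-\ldots}$ in a way that does balance at $a=3/2+3b/2$. You therefore need to redo the Taylor expansion of the characteristic polynomial tracking the $n^{-b}$ dependence explicitly; once done, the rest of your dominated-convergence argument is exactly what the paper implements via \eqref{eq:E10}, \eqref{eq:tournevis2} and Lemma \ref{lem:sfp}.
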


\textcolor{black}{The last term at the \textcolor{black}{ right hand side}  of (\ref{dec_S}) is not so easy to control combining the  Cauchy-Schwarz estimate with a bound on the $\bb L^2$-norm of the function involved, that is of $n^{a-2}\widetilde{ \mc D}_n h_n$. Nevertheless, by repeating the same argument as above, that is, by rewriting  the time evolution of the field $ Q_t^n$ in terms of a solution of another Poisson equation which gives us an expression for the term at the right hand side of equation (\ref{dec_S}), we obtain the following result:} 
\begin{lemma}
\label{lem:pasteque}
Let $h_n:\frac{1}{n}\bb Z^2\to\bb R$ be the solution of the Poisson equation given in \eqref{Poisson},  $a=\inf(3/2+3b/2, 2)$ and $b\in(0,1)$. For any $T>0$ we have that
 \begin{equation*}
\lim_{n\to\infty} \bb E\Big[\Big(\int_0^T Q_t^n (n^{a-2}\widetilde{\mc D}_n h_n)\,dt\Big)^2 \Big]=0.
 \end{equation*}
\end{lemma}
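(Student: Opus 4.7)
The plan is to iterate the Poisson equation trick used to derive \eqref{dec_S}. As already indicated before the statement, a direct Cauchy--Schwarz on $n^{a-2}\widetilde{\mc D}_n h_n$ fails: this function concentrates on the two near-diagonal strips $\{|x-y|=1\}$ with amplitude given by discrete derivatives of $h_n$ along the diagonal, and the prefactor $n^{a-2}$ is not strong enough, for $b<1$, to kill the resulting $\ell^2$-norm. Accordingly, I would let $H_n:\tfrac{1}{n}\bb Z^2\to\bb R$ be the (symmetric, decaying) solution of the second-level Poisson equation
\begin{equation*}
\Delta_n H_n + \gamma n^{1-b}\,\mc A_n H_n = \widetilde{\mc D}_n h_n,
\end{equation*}
whose solvability follows by Fourier inversion on $\bb T^2$, exactly as for \eqref{Poisson}.

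Inserting $H_n$ into the identity \eqref{ec3.15} and isolating the $\widetilde{\mc D}_n$ contribution yields
\begin{equation*}
Q_t^n\bigl(n^{a-2}\widetilde{\mc D}_n h_n\bigr) = \tfrac{d}{dt}\, Q_t^n(H_n) + 2\gamma n^{a-b-3/2}\, \mc S_t^n\bigl(\mc D_n H_n\bigr) - 2\, Q_t^n\bigl(n^{a-2}\widetilde{\mc D}_n H_n\bigr),
\end{equation*}
and integrating in time,
\begin{equation*}
\int_0^T \! Q_t^n\bigl(n^{a-2}\widetilde{\mc D}_n h_n\bigr) dt = Q_T^n(H_n) - Q_0^n(H_n) + 2\gamma n^{a-b-3/2}\!\!\int_0^T\!\! \mc S_t^n\bigl(\mc D_n H_n\bigr) dt - 2\!\!\int_0^T\!\! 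Q_t^n\bigl(n^{a-2}\widetilde{\mc D}_n H_n\bigr) dt.
\end{equation*}
By the a priori bounds \eqref{apriori_bound_S}--\eqref{apriori_bound_Q}, each of the four terms on the right vanishes as soon as one establishes the three static $\ell^2$-estimates
\begin{equation*}
\|H_n\|_{2,n}\to 0,\qquad n^{a-b-3/2}\|\mc D_n H_n\|_{2,n}\to 0,\qquad n^{a-2}\|\widetilde{\mc D}_n H_n\|_{2,n}\to 0.
\end{equation*}

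These bounds are obtained by Fourier analysis on $\bb T^2$, in the same spirit as the computation performed in Appendix~\ref{sec:proof_l3_1} to prove \eqref{eq:l31}. With the symbol
\begin{equation*}
\sigma_n(\xi,\zeta) = -4n^2\bigl(\sin^2(\xi/2)+\sin^2(\zeta/2)\bigr) + 2i\gamma n^{2-b}(\sin\xi+\sin\zeta)
\end{equation*}
of the Poisson operator, one has $\widehat{H_n} = \widehat{\widetilde{\mc D}_n h_n}/\sigma_n$, and $\widehat{h_n}$ is itself explicit in terms of $\widehat{f'}$ through \eqref{Poisson}. The first two estimates follow from a dyadic split in the Fourier variables paralleling the analysis for \eqref{eq:l31}. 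The main obstacle is the third estimate: the operator $\widetilde{\mc D}_n$ probes the diagonal trace of $H_n$, precisely where the forcing of the second Poisson equation is concentrated, so no cancellation is automatic. The key point to verify is that a second inversion of $\sigma_n$ composed with the diagonal-strip projection produced by $\widetilde{\mc D}_n$ genuinely gives an additional smallness that beats the $n^{a-2}$ prefactor; this smallness is positive exactly in the range $b\in(0,1)$ imposed by the statement, and closes to $O(1)$ as $b\uparrow 1$, which explains why the argument stops working at $b=1$.
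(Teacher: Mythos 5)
Your proposal correctly identifies the strategy and matches the paper's first step: defining an auxiliary function $H_n$ (the paper calls it $v_n$) solving the second-level Poisson equation $\Delta_n H_n + \gamma n^{1-b}\mc A_n H_n = \widetilde{\mc D}_n h_n$, inserting it into \eqref{ec3.15}, and integrating in time to replace the troublesome term by four others. Your algebraic decomposition agrees with the paper's, and the first two static $\ell^2$ estimates $\|H_n\|_{2,n}\to 0$ and $n^{a-b-3/2}\|\mc D_n H_n\|_{2,n}\to 0$ are indeed what the paper proves (Lemma \ref{lem:5,7.}).

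However, there is a genuine gap at the third term. You assert that the static estimate $n^{a-2}\|\widetilde{\mc D}_n H_n\|_{2,n}\to 0$ holds for all $b\in(0,1)$ and ``closes to $O(1)$ as $b\uparrow 1$.'' This is incorrect: the paper explicitly remarks (just after introducing $v_n$) that the Cauchy--Schwarz bound on $Q_t^n(n^{a-2}\widetilde{\mc D}_n v_n)$ through an $\ell^2$ estimate on $n^{a-2}\widetilde{\mc D}_n v_n$ only closes the argument for $b<1/3$ (where $a<2$) or $b>1$, and fails in the intermediate range $1/3<b<1$. A single extra inversion of the Poisson operator does not produce enough smallness there. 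The paper resolves this by iterating \emph{once more}: it introduces $w_n$ solving the third-level Poisson equation with forcing $\widetilde{\mc D}_n v_n$ (see \eqref{eq:poisson-w} and Lemma \ref{lem:new_da}), and it is only for $w_n$ that all three static bounds \eqref{est wn}, \eqref{est der wn}, \eqref{est tilde d wn} hold simultaneously for the full range $b\in(0,1)$. Concretely, each inversion of the Poisson symbol composed with the diagonal-trace operator $\widetilde{\mc D}_n$ gains a factor roughly of size $|\sin(\pi y)|^{1/2}/(|\sin(\pi y)|+n^{-b})^{1/2}$ (these are the $K_n$, $\widetilde K_n$ bounds of Lemma \ref{lem:K_nint}), and one such gain is not enough to beat the $n^{a-2}$ prefactor when $a=2$ and $b$ is close to $1$; two gains are. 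So you have reproduced the right mechanism but stopped one iteration too early; your ``key point to verify'' in the last paragraph is precisely the point that fails.
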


\begin{proof}
\textcolor{black}{As mentioned above, in order to prove the result we } use again (\ref{ec3.15}) applied to $h=v_n$ where $v_n$, is the solution of the Poisson equation
\begin{equation}
\label{eq:poisson-v}
n^{a-2} \Delta_n v_n \big(\tfrac{\vphantom{y}x}{n},\tfrac{y}{n}\big) + \gamma n^{a-1-b} \mc A_n v_n\big(\tfrac{\vphantom{y}x}{n},\tfrac{y}{n}\big)  =n^{a-2} {\widetilde {\mc D}}_n h_n.
\end{equation}

Then, by integrating in time \eqref{ec3.15} we have
\begin{equation*}
\begin{split}
\int_0^T Q_t^n \big( n^{a-2} {\widetilde{\mc D}}_n h_n \big) dt &= 2\gamma n^{a-b-3/2}  \int_0^T {\mc S}_t^n ({\mc D}_n v_n) dt -2 \int_{0}^T {Q}_t^n  \big( n^{a-2} {\widetilde{\mc D}}_n v_n \big) dt \\
&+ Q_T^n (v_n) -Q_0^n (v_n).
\end{split}
\end{equation*}

Now, by using repeatedly the inequality $(x+y)^2\leq 2x^2+2y^2$ in order to conclude we have to show that 
\textcolor{black}{
 \begin{equation}
 \label{new_da12}
\lim_{n\to\infty} \bb E\Big[\Big(n^{a-b-3/2}  \int_0^T {\mc S}_t^n ({\mc D}_n v_n)\,dt\Big)^2 \Big]=0, 
 \end{equation} 
\begin{equation}
\label{new_da13}
\lim_{n\to\infty} \bb E\Big[\Big( Q_T^n (v_n) -Q_0^n (v_n)\Big)^2 \Big]=0
\end{equation}
and
\begin{equation}\label{new_da}
\lim_{n\to\infty} \bb E\Big[\Big(\int_{0}^T {Q}_t^n  \big( n^{a-2} {\widetilde{\mc D}}_n v_n \big)\, dt \Big)^2 \Big]=0.
 \end{equation}}
We have the following estimates on $v_n$ which are proved in Appendix \ref{subsec:secondterm}.
\begin{lemma}
\label{lem:5,7.}
The solution $v_n$ of \eqref{eq:poisson-v} satisfies
\begin{equation}
\label{est vn}
\lim_{n \to \infty}\|v_n\|_{2,n}^2=0,
\end{equation}
\begin{equation}\label{est der vn}
\lim_{n \to \infty}\|n^{a-b -3/2} \,  \mc D_nv_n\|^2_{2,n}=0,
\end{equation}
\end{lemma}
The expectation in (\ref{new_da12}), by \eqref{apriori_bound_S} and \eqref{est der vn}, vanishes, as $n\to\infty$. Similarly the expectation in (\ref{new_da13}), by \eqref{apriori_bound_Q} and \eqref{est vn}, vanishes as $n\to\infty$.
\textcolor{black}{To bound  the third expectation above we could be tempted to combine the a priori bound \eqref{apriori_bound_Q} with an estimate on the $\bb L^2$ norm of $n^{a-2} {\widetilde{\mc D}}_n v_n $. We leave the interested reader to check that this argument  only shows \eqref{new_da} for $b>1$ (and $a=2$) or for $a<2$ (i.e. $b<1/3$). Therefore, proving \eqref{new_da} for $b>1/3$ requires extra work. To overcome this problem,  our  idea is  to establish the result of Lemma \ref{lem:pasteque} with $h_n$ replaced by $v_n$ by using the same method used in the current lemma but with the advantage that now the a priori bound \eqref{apriori_bound_Q} combined with the estimates on the $\bb L^2$-norms of the functions involved, will be sufficient to conclude  the proof  for any $b<1$.  This is the content of Lemma \ref{lem:new_da}, from where we conclude the proof of the theorem.}
\end{proof}

In fact, in order to complete the proof, some tightness of the fields have to be established. The arguments being similar to the ones given in \cite{BGJ}, Section 5.2, we do not repeat them and invite the interested reader to read the proofs there.

\section*{Acknowledgements}
This work was  supported by the French Ministry of Education through the grant ANR (EDNHS). C.B. thanks the French Ministry of Education for its support through the grant ANR (LSD). 

P.G.~thanks  FCT/Portugal for support through the project 
UID/MAT/04459/2013.  

 M.J.~thanks CNPq for its support through the grant 401628/2012-4 and FAPERJ for its support through the grant JCNE E17/2012. M.J.~was partially supported by NWO Gravitation Grant 024.002.003-NETWORKS. 
 
This research was supported in part by the International Centre for Theoretical Sciences (ICTS) during a visit for participating in the program Non-equilibrium statistical physics (Code: ICTS/Prog-NESP/2015/10).

\appendix

\section{Computations involving the generator $L_\kappa$}
\label{sec:Abb}

Let $f: \bb Z \to \bb R$ be a function of finite support, and let ${\mc E} (f) : \Omega \to \RR$ be defined as
\begin{equation*}
{\mc E} (f) =\sum_{{x \in \bb Z}} f(x)\eta(x)^2.
\end{equation*}
A simple computation shows that
\begin{equation*}
S{\mc E} (f) =\sum_{{x \in \bb Z}} \Delta f(x)\eta^2(x),
\end{equation*}
where $\Delta f(x)  =  f(x\!+\!1)+f(x\!-\!1) -2f(x)$ is the discrete Laplacian on $\bb Z$.
On the other hand
\begin{equation*}
A{\mc E} (f) =-2\sum_{{x \in \bb Z} } \nabla f(x)\eta(x)\eta(x\!+\!1),
\end{equation*}
where $\nabla f(x)  =  f(x\!+\!1) -f(x)$ is the discrete right-derivative on $\bb Z$. It follows that
\begin{equation}
L_{\kappa} {\mc E} (f) =-2\kappa \sum_{{x \in \bb Z} } \nabla f(x)\eta(x)\eta(x\!+\!1) + \sum_{{x \in \bb Z}} \Delta f(x)\eta^2(x).
\end{equation}

Let $f: \bb Z^2 \to \bb R$ be a symmetric function of finite support, and let ${ Q} (f) : \Omega \to \RR$ be defined as
\vspace{-10pt}
\begin{equation*}
{Q} (f) =\sum_{\substack{x,y \in \bb Z \\ x \neq y}} \eta (x) \eta (y) f(x,y).
\end{equation*}
Define $\Delta f: \bb Z^2 \to \bb R$ as
\begin{equation}
\Delta f(x,y)  =  f(x\!+\!1,y)+f(x\!-\!1,y) + f(x,y\!+\!1)+f(x,y\!-\!1)-4f(x,y)
\end{equation}
for any $x,y \in \bb Z$ and $\mc A f: \bb Z^2 \to \bb R$ by
\begin{equation}
\mc Af(x,y) = f(x\!-\!1,y) + f(x,y\!-\!1) - f(x\!+\!1,y) - f(x,y\!+\!1)
\end{equation}
for any $x,y \in \bb Z$. Notice that $\Delta f$ is the discrete Laplacian on the lattice $\bb Z^2$ and $\mc A f$ is a possible definition of the discrete derivative of $f$ in the direction $(-2,-2)$. Notice that we are using the same symbol $\Delta$ for the one-dimensional and two-dimensional, discrete Laplacian. From the context it will be clear which operator we will be using. We have that
\begin{equation}
\begin{split}
S Q (f)
		&=\!\! \sum_{|x-y| \ge 2}\!\! f(x,y) \big[ \eta (y) \Delta \eta (x) + \eta (x) \Delta \eta (y) \big]\\
		&+ 2 \sum_{x\in\bb Z} f(x,x\!+\!1) \big[ (\eta (x\!-\!1) -\eta (x)) \eta (x\!+\!1) +(\eta (x\!+\!2) -\eta (x\!+\!1)) \eta (x)\big]\\
		&=\!\! \sum_{x,y\in\bb Z}\!\! \Delta f(x,y) \eta (x) \eta (y) -2 \sum_{x \in \bb Z} f(x,x) \eta (x) \Delta \eta(x) \\
		&-2 \sum_{x\in\bb Z}  f(x,x\!+\!1) \big[ \eta (x\!+\!1) \Delta \eta(x) + \eta (x) \Delta \eta(x\!+\!1)\big]\\
		&+ 2 \sum_{x\in\bb Z}  f(x,x\!+\!1) \big[ \eta (x\!+\!1) \eta (x\!-\!1) +\eta (x\!+\!2) \eta (x) -2 \eta (x) \eta (x\!+\!1)\big].\\
\end{split}
\end{equation}
Grouping terms involving $\eta(x)^2$ and $\eta(x)\eta(x\!+\!1)$ together we get that
\begin{equation}
\begin{split}
S Q(f)
		&= \sum_{\substack{x,y \in \bb Z \\ x \neq y}} ({\Delta} f)(x,y) \eta (x)\eta (y) \\
		& + 2 \sum_{x \in \bb Z}  \Big\{ \big[f(x,x\!+\!1)-f(x,x)\big] +\big[f(x,x\!+\!1) -f(x\!+\!1, x\!+\!1)\big]\Big\} \eta (x)\eta(x\!+\!1)\\
		&= Q(\Delta f)\\ 
		&+ 2 \sum_{x\in\bb Z}  \Big\{\big[ f(x,x\!+\!1) -f(x,x)\big] +\big[f(x,x\!+\!1) -f(x\!+\!1, x\!+\!1)\big]\Big\} \eta (x)\eta(x\!+\!1).\\
\end{split}
\end{equation}
Similarly, we have that
\begin{equation}
\begin{split}
A{Q} (f) &= \!\!\sum_{\substack{x,y \in \bb Z \\ x \neq y}}\!\!  \mc A f(x,y) \eta(x) \eta(y)\\
		&+ 2 \sum_{x \in \bb Z} \Big\{ \eta(x)^2 \big[ f(x\!-\!1,x) -f(x,x\!+\!1)\big]  - \eta(x) \eta(x\!+\!1) \big[ f(x,x)-f(x\!+\!1,x\!+\!1)\big]\Big\}\\
&= Q(\mc A f)\\
		&+ 2 \sum_{x \in \bb Z} \Big\{ \eta(x)^2 \big[ f(x\!-\!1,x) -f(x,x\!+\!1)\big] - \eta(x) \eta(x\!+\!1) \big[ f(x,x)-f(x\!+\!1,x\!+\!1)\big]\Big\}.
\end{split}
\end{equation}
From this it follows that
\vspace{-10pt}
\begin{equation}
\label{ecA.8}
L_{\kappa} Q(f) = Q((\Delta + \kappa \mc A)f) + D_\kappa (f),
\end{equation}
where the diagonal term $D_\kappa(f)$ is given by
\begin{equation}
\begin{split}
D_\kappa (f)  &=2 \kappa \sum_{x \in \bb Z} \big(\eta(x)^2- \tfrac{1}{\beta}\big) \big(f(x\minus 1,x) - f(x,x \plus 1)\big)\\
	&\quad+ 2 \sum_{x \in \bb Z} \eta(x) \eta(x \plus 1) \big( 2 f(x,x \plus 1) -(1+\kappa) f(x,x) -(1-\kappa) f(x\plus 1, x\plus 1) \big).
\end{split}
\end{equation}
\textcolor{black}{Above in $D_\kappa$, we could add the normalization constant  $\frac{1}{\beta}$ for free, since  $\sum_{x\in\bb Z}f(x,x\plus 1) - f(x\minus 1,x)=0$. We also note that the operators $f \mapsto Q(f)$, $f \mapsto L Q(f)$ are continuous maps from $\ell^2(\bb Z^2)$ to $L^2(\mu_\beta)$ and therefore, an approximation procedure shows that the identities above are true for any $f \in \ell^2(\bb Z^2)$.}

\section{Tools of Fourier analysis}\label{sec:Ab}

Let $d \ge 1$ and let $x \cdot y $ denote the usual scalar product in $\RR^d$ between $x$ and $y$. The Fourier transform  of a function $g :\tfrac{1}{n} \ZZ^d \to \RR$ is defined by
\begin{equation*}
{\widehat g}_n (k) = \tfrac{1}{n^d} \sum_{x \in \ZZ^d} g (\tfrac{x}{n}) e^{ \tfrac{2i \pi k \cdot x}{n}}, \quad k\in \RR^d.
\end{equation*}
The function $\widehat{g}_n$ is $n$-periodic in all the directions of $\RR^d$. We have the following Parseval-Plancherel identity  between the $\ell^2$-norm of $g$, weighted by the natural mesh, and the $L^2 ([-\tfrac{n}{2}, \tfrac{n}{2}]^d)$-norm of its Fourier transform:
\begin{equation}
\| g \|^2_{2,n} := \tfrac{1}{n^d} \sum_{x \in \ZZ^d} | g (\tfrac{x}{n})|^2 = \int_{[-\tfrac{n}{2}, \tfrac{n}{2}]^d}\;  \left| {\widehat g}_n (k) \right|^2 \, dk \; := \| {\widehat g_n} \|_2^2.
\end{equation}

The function $g$ can be recovered from the knowledge of its Fourier transform by the inverse Fourier transform of ${\widehat g}_n$:
\begin{equation}
g ( \tfrac{x}{n}) = \int_{[-\tfrac{n}{2}, \tfrac{n}{2}]^d}\; {\widehat g}_n (k)\;  e^{- \frac{2i \pi x\cdot k}{n}} \, dk.
\end{equation}

For any $p\ge 1$ let $[(\nabla_n)^p ]$ denote the $p$-th iteration of the operator $\nabla_n$.

\begin{lemma}[\cite{BGJ}]
\label{lem:sfp}
Let $f: \tfrac{1}{n} \ZZ \to \RR$ and $p \ge 1$ be such that
\begin{equation}
\label{eq:asssfp}
\frac{1}{n} \sum_{x \in \ZZ} \left| [(\nabla_n)^p ] f\big( \tfrac{x}{n} \big) \right| < +\infty.
\end{equation}
There exists a universal constant $C:=C(p)$ independent of $f$ and $n$  such that for any $|y| \le 1/2$,
\begin{equation*}
| \widehat{f_n} (yn) | \le \frac{C}{n^p |\sin(\pi y)|^p} \; \left| \frac{1}{n} \sum_{x \in \ZZ} [(\nabla_n)^p ] f\big( \tfrac{x}{n} \big) e^{2i \pi y x} \right|.
\end{equation*}
In particular, if $f$ is in the Schwartz space  ${\mc S} (\RR)$, then for any $p\ge 1$, there exists a constant $C:=C(p,f)$ such that for any $|y| \le 1/2$,
\begin{equation*}
| \widehat{f_n} (yn) | \le \frac{C}{1+ (n|y|)^{p}}.
\end{equation*}
\end{lemma}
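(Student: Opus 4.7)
The plan is to exploit the fact that the discrete derivative $\nabla_n$ is diagonalized by the Fourier transform, which makes the first inequality essentially a rewriting. First I would compute the symbol of $\nabla_n$: a single shift of summation index in the defining sum gives
\begin{equation*}
\widehat{\nabla_n f}_n(k) = n\big(e^{-2i\pi k/n}-1\big)\,\widehat{f}_n(k),
\end{equation*}
and iterating produces $\widehat{[(\nabla_n)^p]f}_n(k) = n^p\big(e^{-2i\pi k/n}-1\big)^p\widehat{f}_n(k)$. Setting $k=yn$ and using $|e^{-2i\pi y}-1| = 2|\sin(\pi y)|$, one solves for $|\widehat{f}_n(yn)|$ and obtains the stated inequality with universal constant $C(p)=2^{-p}$. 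The hypothesis \eqref{eq:asssfp} is exactly what is needed for all sums to converge absolutely, so the manipulations are legitimate.

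For the Schwartz corollary, I would interpolate between two bounds. The trivial estimate $|\widehat{f}_n(yn)| \le \tfrac{1}{n}\sum_{x} |f(\tfrac{x}{n})| \le C_0(f)$ controls the region where $n|y|$ is small, since the right-hand side is a Riemann sum for $\int|f|$ and is therefore bounded uniformly in $n$. In the complementary region I would apply the first part of the lemma with exponent $p$, combined with $|\sin(\pi y)| \ge 2|y|$ on $[-1/2,1/2]$ and an analogous Riemann-sum bound for $\tfrac{1}{n}\sum_x |[(\nabla_n)^p f](\tfrac{x}{n})|$, to get $|\widehat{f}_n(yn)| \le C_p(f)/(n|y|)^p$. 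Taking the minimum of the two bounds and using the elementary inequality $\min\{1, t^{-p}\} \le 2/(1+t^p)$ with $t=n|y|$ yields the desired form $C/(1+(n|y|)^p)$.

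The only routine step that requires mild care is the uniform-in-$n$ control of $\tfrac{1}{n}\sum_x |[(\nabla_n)^p f](\tfrac{x}{n})|$ when $f\in\mc S(\RR)$. Here I would write $[(\nabla_n)^p f](\tfrac{x}{n}) = f^{(p)}(\tfrac{x}{n}) + O(1/n)$ by a Taylor expansion, and observe that the pointwise remainder inherits the rapid decay of $f$ and its derivatives, so that the sum is dominated by a Riemann sum for $\int|f^{(p)}|$ plus a vanishing correction. I do not foresee any genuine obstacle; the single mildly delicate point is precisely this verification, which is where the full Schwartz hypothesis (as opposed to mere $C^p$-regularity with decay) is used.
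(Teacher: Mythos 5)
Your argument is correct, and it follows the standard (and essentially forced) route: diagonalize $\nabla_n$ in Fourier, obtaining $\widehat{(\nabla_n f)}_n(k)=n(e^{-2i\pi k/n}-1)\widehat f_n(k)$, iterate, use $|e^{-2i\pi y}-1|=2|\sin(\pi y)|$ to solve for $\widehat f_n(yn)$ (giving the equality with constant $2^{-p}$), and then for the Schwartz case interpolate between the trivial $L^1$ bound and the first part using $|\sin(\pi y)|\ge 2|y|$ on $[-\tfrac12,\tfrac12]$ together with $\min\{1,t^{-p}\}\le 2/(1+t^p)$. The paper itself does not reprove this lemma (it is imported verbatim from \cite{BGJ}), so there is no internal proof to contrast against, but your argument is the natural one and surely coincides with the reference's. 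Two minor points worth tightening: (i) the hypothesis \eqref{eq:asssfp} alone does not guarantee that $\widehat f_n$ itself is defined — one needs $f$ summable (or square-summable, interpreting $\widehat f_n$ in $L^2$), which the lemma implicitly assumes; the assumption on $\nabla_n^p f$ is what makes the right-hand side finite, not what justifies the index shift. (ii) For the uniform-in-$n$ Riemann-sum bound, rather than a pointwise Taylor expansion with an $O(1/n)$ remainder (whose $x$-decay you would then still need to track), it is cleaner to note $(\nabla_n f)(\tfrac xn)=n\int_{x/n}^{(x+1)/n}f'(s)\,ds$, so $|(\nabla_n^p f)(\tfrac xn)|\le \sup_{s\in[x/n,(x+p)/n]}|f^{(p)}(s)|$, and the rapid decay of $f^{(p)}$ then gives the uniform bound directly.
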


Several times we will use the following elementary change of variable property.

\begin{lemma}[\cite{BGJ}]
\label{lem:cov}
Let $F: \RR^2 \to \CC$ be a $n$-periodic function in each direction of $\RR^2$. Then we have that
\begin{equation*}
\iint_{[-\tfrac{n}{2}, \tfrac{n}{2}]^2} F (k,\ell) \, dk d\ell \; = \; \iint_{[-\tfrac{n}{2}, \tfrac{n}{2}]^2} F(\xi- \ell, \ell) \, d\xi d\ell.
\end{equation*}
\end{lemma}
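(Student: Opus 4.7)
The plan is to reduce the claimed identity to the elementary one-dimensional fact that the integral of a periodic function over any full period equals the integral over $[-\tfrac{n}{2}, \tfrac{n}{2}]$. Assuming $F$ is locally integrable (so that the double integrals are well-defined; in every application in the paper $F$ arises from Fourier transforms of Schwartz-type objects and is continuous and bounded on $[-\tfrac{n}{2}, \tfrac{n}{2}]^2$), I would fix $\ell \in [-\tfrac{n}{2}, \tfrac{n}{2}]$ and examine the inner integral
\begin{equation*}
I(\ell) := \int_{-n/2}^{n/2} F(\xi - \ell, \ell) \, d\xi
\end{equation*}
appearing on the right hand side.

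First, I would perform the linear change of variable $k = \xi - \ell$ (with $dk = d\xi$) in $I(\ell)$. Since $\xi$ sweeps through $[-\tfrac{n}{2}, \tfrac{n}{2}]$, the new variable $k$ sweeps through the translated interval $[-\tfrac{n}{2} - \ell,\, \tfrac{n}{2} - \ell]$, giving
\begin{equation*}
I(\ell) = \int_{-n/2 - \ell}^{n/2 - \ell} F(k, \ell)\, dk.
\end{equation*}
Next, I would invoke the hypothesis that $F$ is $n$-periodic in each direction: in particular, for fixed $\ell$, the map $k \mapsto F(k, \ell)$ is $n$-periodic, so its integral over any interval of length $n$ coincides with the integral over $[-\tfrac{n}{2}, \tfrac{n}{2}]$. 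Hence
\begin{equation*}
I(\ell) = \int_{-n/2}^{n/2} F(k, \ell)\, dk.
\end{equation*}

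Finally, integrating this pointwise identity in $\ell$ over $[-\tfrac{n}{2}, \tfrac{n}{2}]$ and applying Fubini's theorem to the resulting double integrals on both sides yields the lemma. There is no genuine obstacle; the only care needed is justifying the swap of the order of integration, which is immediate under local integrability of $F$, and verifying that periodicity transfers to each one-dimensional slice, which is a direct consequence of the two-dimensional periodicity assumption.
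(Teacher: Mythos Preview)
Your proof is correct and is exactly the standard argument for this elementary change-of-variables identity. The paper does not give its own proof of this lemma; it is simply cited from \cite{BGJ}, so there is nothing further to compare.
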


\section{Estimates involving $h_n$}
\label{sec:proof_l3}
\textcolor{black}{Let $h_n: \sfrac{1}{n} \bb Z^2 \to \bb R$ be the unique solution in $\ell^2(\sfrac{1}{n}\bb Z^2)$ of \eqref{Poisson}. Observe that $h_n$ is a symmetric function. The Fourier transform of $h_n$ is not difficult to compute  by using Appendix \ref{sec:Ab}.
First we not that the Fourier transform of the function $\Delta_n h$ for a given, summable function $h: \sfrac{1}{n} \bb Z^2 \to \bb R$ is given by:
\begin{equation}
\begin{split}\label{ft_of_delta_n}
\widehat{(\Delta_n h)}_n (k,\ell) = - n^2\Lambda \big( \tfrac{k}{n}, \tfrac{\ell}{n}\big) \widehat{h}_n(k,\ell),
\end{split}
\end{equation}
where
\begin{equation}
\begin{split}
 \Lambda \big( \tfrac{k}{n}, \tfrac{\ell}{n}\big)&= -\big( e^{\frac{2\pi i k}{n}}\!\! + e^{-\frac{2\pi i k}{n}}\!\! + e^{\frac{2 \pi i \ell}{n}} \!\!+ e^{-\frac{2\pi i \ell}{n}} \!\!-4\big)\\
		&=4 \left[ \sin^2\big(\tfrac{\pi k}{n}\big) + \sin^2\big(\tfrac{\pi \ell}{n}\big)\right].
\end{split}
\end{equation}
Similarly, the Fourier transform of ${\mc A}_n h$ is given by
\begin{equation}\label{ft_of_a_n}
\begin{split}
\widehat{({\mc A}_n h)}_n (k,\ell)=i\,n \,\Omega \big( \tfrac{k}{n}, \tfrac{\ell}{n}\big) \widehat{h}_n(k,\ell),
\end{split}
\end{equation}
 where
\begin{equation}
\begin{split}
i \,\Omega \big( \tfrac{k}{n}, \tfrac{\ell}{n}\big)&= e^{\tfrac{2\pi i k}{n}}\!\! + e^{\tfrac{2\pi i \ell}{n}}\!\! -e^{-\tfrac{2\pi i k}{n}}\!\! - e^{-\tfrac{2 \pi i \ell}{n}}\\
		&= 2\,i\,\big( \sin\big(\tfrac{2\pi k}{n}\big) + \sin \big(\tfrac{2 \pi \ell}{n} \big)\big).
\end{split}
\end{equation}
Note in particular that $\Omega(\frac{k}{n}, \frac{\ell}{n})$ is a real number.
Let us now compute the Fourier transform of the function $g_n= \nabla_n f \otimes \delta$ defined in (\ref{eq:3.9}):
\begin{equation}
\begin{split}
\widehat{g}_n(k,\ell)
		& = \cfrac{1}{n^2} \sum_{x,y\in \mathbb{Z}} \big[ \nabla_n f  \otimes \delta \big] \big( \tfrac{\vphantom{y}x}{n}, \tfrac{y}{n}\big) e^{ \tfrac{2i \pi(kx + \ell y)}{n}}\\
		&= - \cfrac{i n}{2} \Omega \big( \tfrac{k}{n}, \tfrac{\ell}{n}\big) {\widehat{f_n}} (k+ \ell).
\end{split}
\end{equation}
From the previous computations, we have that
\begin{equation}
\label{ecB.4}
\widehat{h}_n(k,\ell) =  \frac{1}{\sqrt n} \frac{i\,\Omega \big( \tfrac{k}{n}, \tfrac{\ell}{n}\big)}{ \gamma^{-1} n^b \Lambda\big( \tfrac{k}{n}, \tfrac{\ell}{n}\big) - i\, \Omega \big( \tfrac{k}{n}, \tfrac{\ell}{n}\big)} \; \widehat{f_n}(k+\ell).
\end{equation}
Our aim will be to study the behavior of $h_n$, as $n \to \infty$.}

\subsection{Proof of \eqref{eq:l31}}
\label{sec:proof_l3_1}

We want to show that
\begin{equation}
\|h_n\|^2_{2,n}:= \frac{1}{n^2} \sum_{x,y \in \bb Z} h_n\big(\tfrac{\vphantom{y}x}{n}, \tfrac{y}{n}\big)^2,
\end{equation}
vanishes, as $n\to\infty$. By Plancherel-Parseval's relation, Lemma \ref{lem:cov} \textcolor{black}{ and \eqref{ecB.4}},  we have that
\begin{equation*}
\begin{split}
\| h_n \|^2_{2,n}
		&=\iint_{[-\tfrac{n}{2}, \tfrac{n}{2}]^2} | {\widehat h}_n (k, \ell) |^2 dk d\ell \\
		&= \frac{1}{n} \iint_{[-\tfrac{n}{2}, \tfrac{n}{2}]^2}\frac{\Omega\big( \tfrac{k}{n}, \tfrac{\ell}{n}\big)^2 \, |{\widehat f_n} (k+\ell)|^2}{\gamma^{-2} n^{2b}\Lambda \big( \tfrac{k}{n}, \tfrac{\ell}{n}\big)^2 + \Omega \big( \tfrac{k}{n}, \tfrac{\ell}{n}\big)^2} \; dk d\ell.
\end{split}
\end{equation*}
Since 
\begin{equation}
\label{eq:est_omega}
\Omega \big( \tfrac{\xi -\ell}{n}, \tfrac{\ell}{n}\big)^2 \le 4 \Big| 1- e^{\tfrac{2 i \pi \xi}{n}} \Big|^2=16 \sin^2 \big(\tfrac{\pi\xi}{n}\big),
\end{equation}
last expression can be bounded from above by 
\begin{equation*}
\begin{split}
& \frac{16}{n} \int_{-n/2}^{n/2} \sin^2 ({\tfrac{\pi \xi}{n}})\, \big|{\widehat f}_n (\xi)\big|^2 \; \left[ \int_{-n/2}^{n/2} \frac{d\ell}{\gamma^{-2} n^{2b} \Lambda \big( \tfrac{\xi - \ell}{n}, \tfrac{\vphantom{\xi}\ell}{n}\big )^2 + \Omega \big( \tfrac{\xi - \ell}{n}, \tfrac{\vphantom{\xi}\ell}{n}\big)^2} \right] \; d\xi \\
		=& 16 n \int_{-1/2}^{1/2} \sin^2 (\pi y) |{\widehat f}_n (ny) |^2 {\widetilde W_n} (y) dy,
\end{split}
\end{equation*}
where
\begin{equation}
\label{eq:function_W}
\begin{split}
{\widetilde W_n} (y) =&  \int_{-1/2}^{1/2}  \frac{dx}{\gamma^{-2} n^{2b}\Lambda (y-x,x)^2 + \Omega (y-x, x)^2 } \\\le&  \int_{-1/2}^{1/2}  \frac{dx}{\gamma^{-2} \Lambda (y-x,x)^2 + \Omega (y-x, x)^2 }.
\end{split}
\end{equation}
Since by Lemma F.5 in \cite{BGJ}, the right hand side of (\ref{eq:function_W}) is of order $|y|^{-3/2}$ for $y\in[-\tfrac 12, \tfrac 12]$, then, from Lemma \ref{lem:sfp} we conclude that $\|h_n\|^2_{2,n}=O(\tfrac {1}{\sqrt n})$, which ends the proof of \eqref{eq:l31}.
\subsection{Proof of Lemma \ref{l3}}
\label{sec:proof_l3_2}
Let $q_n : {\sfrac{1}{n}} \ZZ \to \RR$ be the function defined by
\begin{equation}
q_n \big(\tfrac{x}{n}) = n^{a-b -3/2} \, {{\mc D}}_n h_n \,  \big(\tfrac{x}{n})
\end{equation}
and let $q:\RR \to \RR$ be defined by its Fourier transform ${\mc F} q$ given by
\begin{equation}
({\mc F} q)(\xi) = \int_{\RR} e^{2i \pi x \xi} q(x) dx= \cfrac{|\pi \xi|^{3/2}}{\sqrt {2\gamma}} e^{i {\rm{sgn}}(\xi) \tfrac{\pi}{4}}.
\end{equation}

\begin{lemma}
We have that
\begin{enumerate}[1.]
\item For $b \le 1/3$ and $a=3/2 +3b/2$,
\begin{equation}
\lim_{n \to + \infty} \frac{1}{n} \sum_{x \in \ZZ} \left[ q \big(\tfrac{x}{n} \big) -q_n \big( \tfrac{x}{n}\big)\right]^2 =0.
\end{equation}
\item For $b>1/3$ and $a=2$,
\begin{equation}
\lim_{n \to + \infty} \frac{1}{n} \sum_{x \in \ZZ} q^2_n \big(\tfrac{x}{n}\big) =0.
\end{equation}
\end{enumerate}
\end{lemma}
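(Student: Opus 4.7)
My approach is Fourier-analytic, following the apparatus of Appendix \ref{sec:Ab} and mirroring the proof of \eqref{eq:l31} in Appendix \ref{sec:proof_l3_1}. By Plancherel's identity, both claims reduce to an $L^2$-bound on $\widehat{q_n}$ on $[-n/2, n/2]$, up to the natural periodisation of $\mathcal{F}q$ in case (1).

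The first step is to obtain a one-dimensional integral representation for $\widehat{q_n}(\xi)$. Starting from \eqref{ecB.4}, using the definition of $\mathcal{D}_n$, and exploiting the symmetry of $\widehat{h_n}$ to symmetrise the resulting shift factors, one obtains
\[
\widehat{q_n}(\xi) = \tfrac{n^{a-b}}{2}\,\widehat{f_n}(\xi)\int_{-1/2}^{1/2}\frac{\Omega^2(y-x,x)\,dx}{\gamma^{-1}n^b\Lambda(y-x,x) - i\Omega(y-x,x)},\qquad y := \tfrac{\xi}{n}.
\]
The second step is the asymptotic evaluation of this integral as $n \to \infty$ with $\xi$ in a compact set (so $y \to 0$). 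Using the identities $\Omega(y-x,x) = 4\sin(\pi y)\cos(\pi(y-2x))$ and $\Lambda(y-x,x) = 4[1 - \cos(\pi y)\cos(\pi(y-2x))]$, the substitution $\theta = \pi(y-2x)$ together with Taylor expansion for small $y,\theta$ shows that the integrand is concentrated near $\theta = 0$ on the balancing scale $\lambda_n(y) := \sqrt{2\pi\gamma y/n^b}$, which dominates $\pi y$ whenever $b < 1$. Rescaling $\theta = \lambda_n v$ and invoking the explicit evaluation $\int_\RR (v^2+i)/(v^4+1)\,dv = \pi e^{i\pi/4}$ yields the pointwise asymptotic $\widehat{q_n}(\xi) \sim C_\gamma\, n^{a - 3/2 - 3b/2}\,|\xi|^{3/2}\, e^{i\,\mathrm{sgn}(\xi)\pi/4}\,\widehat{f}(\xi)$ for an explicit constant $C_\gamma > 0$, with the negative-$\xi$ case handled by Hermitian symmetry.

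In case (1), $a = \tfrac{3}{2} + \tfrac{3b}{2}$ so the $n$-exponent vanishes and the pointwise limit identifies $\widehat{q_n}$ with $\mathcal{F}q\cdot\widehat{f}$ (up to the normalisation of the Fourier multiplier in the statement). In case (2), the exponent $\tfrac{1}{2} - \tfrac{3b}{2}$ is strictly negative, so $\widehat{q_n}\to 0$ pointwise. I would upgrade both conclusions to $L^2([-n/2, n/2])$ convergence via dominated convergence: the rescaled integrand admits a uniform majorant $|\widehat{q_n}(\xi)| \le C\,|\xi|^{3/2}\,|\widehat{f}(\xi)|$ produced from the estimate $\widetilde{W}_n(y) \lesssim |y|^{-3/2}$ (Lemma F.5 of \cite{BGJ}, already used in Appendix \ref{sec:proof_l3_1}), and this majorant lies in $L^2(\RR)$ thanks to the fast decay of $\widehat{f}$.

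The principal technical obstacle is not the pointwise asymptotic, which is elementary once the scale $\lambda_n(y)$ is identified, but rather the simultaneous control of the integrand in the concentration region $|\theta| \lesssim \lambda_n$ and in the bulk region $|\theta| \sim 1$ (where the denominator $\gamma^{-2}n^{2b}\Lambda^2$ dominates and yields only lower-order contributions). Managing this crossover uniformly in $\xi$ is the delicate point, but it is within the reach of the Fourier apparatus already assembled in Appendix \ref{sec:Ab}.
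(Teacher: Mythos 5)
Your high-level scheme coincides with the paper's: one reduces, via Plancherel and the change of variable $\ell = nx$, to the representation $\widehat{q_n}(\xi) = 2n^{a-b}\,G_n\big(\tfrac{\xi}{n}\big)\,\widehat{f}_n(\xi)$ where
\[
G_n(y)=\tfrac14\int_{-1/2}^{1/2}\frac{\Omega(y-z,z)^2}{\gamma^{-1}n^{b}\Lambda(y-z,z)-i\,\Omega(y-z,z)}\,dz,
\]
and one then needs the $n\to\infty$ asymptotics of $G_n$ together with the decay of $\widehat{f}_n$ from Lemma \ref{lem:sfp}. Where you genuinely depart from the paper is in how $G_n$ is evaluated. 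The paper (Lemma \ref{lem:G0345}) rewrites $G_n$ as a contour integral over the unit circle, factors the denominator through the degree-two polynomial $P_w(z)=(z-z_-)(z-z_+)$ of \eqref{eq:pz}, computes residues at $z=0$ and $z=z_-$, and hence obtains a \emph{closed-form} expression from which the asymptotic $G_n(y)=\sqrt{\gamma/2}\,n^{-b/2}|\sin\pi y|^{3/2}e^{i\,\mathrm{sgn}(y)\pi/4}+\mathcal O(\sin^2\pi y)$ (for $b<1$) drops out by elementary Taylor expansion, with an error that is uniform in $y\in[-1/2,1/2]$ with no further work. You instead propose a Laplace-type rescaling $\theta=\lambda_n(y)v$, $\lambda_n(y)=\sqrt{2\pi\gamma y/n^{b}}$, which correctly identifies the leading term and the constant (your $\int_{\RR}(v^2+i)/(v^4+1)\,dv=\pi e^{i\pi/4}$ checks out). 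This is a legitimate alternative in spirit, but, as you note, the crossover between the concentration window $|\theta|\lesssim\lambda_n(y)$ and the bulk must be controlled uniformly in $y$, and that work is precisely what the residue formula hands you for free.

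The concrete gap is your claim that the dominated-convergence majorant $|\widehat{q_n}(\xi)|\le C|\xi|^{3/2}|\widehat{f}(\xi)|$ can be ``produced from'' the estimate $\widetilde W_n(y)\lesssim|y|^{-3/2}$ of \eqref{eq:function_W}. That route fails: using $\Omega^2\le 16\sin^2(\pi y)$ and then Cauchy--Schwarz to bound $\int\big(\gamma^{-2}n^{2b}\Lambda^2+\Omega^2\big)^{-1/2}dz$ by $\sqrt{\widetilde W_n(y)}$ yields only $|G_n(y)|\lesssim|y|^{5/4}$, hence $|\widehat{q_n}(\xi)|\lesssim n^{a-b-5/4}|\xi|^{5/4}|\widehat{f}_n(\xi)|=n^{1/4+b/2}|\xi|^{5/4}|\widehat{f}_n(\xi)|$, which is \emph{not} $n$-uniform (the exponent $\tfrac14+\tfrac{b}{2}$ is strictly positive for all $b\ge 0$). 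The needed bound $|G_n(y)|\lesssim n^{-b/2}|y|^{3/2}+|y|^{2}$ is sharper than anything $\widetilde W_n$ can give, because the numerator $\Omega^2$ is not slaved to the denominator in a way Cauchy--Schwarz can exploit; one must either carry the same two-regime balancing estimate you use for the pointwise limit, or appeal to the paper's exact formula. So the majorant cannot be outsourced to $\widetilde W_n$; it has to come from the uniform version of your rescaling analysis, which is exactly the step you flag as delicate and leave undone.
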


\begin{proof}
By following the proof of Lemma D.1 in \cite{BGJ} we have that 
\begin{equation*}
\begin{split}
{\widehat{q}_n} (\xi) =-  n^{a-b -3/2} \, \cfrac{i n}{2} \int_{-n/2}^{n/2} \Omega  \big( \tfrac{\xi -\ell}{n}, \tfrac{\ell}{n}\big)  {\widehat h_n} (\xi- \ell,\ell) \, d\ell.
\end{split}
\end{equation*}
By the explicit expression (\ref{ecB.4}) for ${\widehat h_n}$ we obtain that
\begin{equation*}
{\widehat{q_n}} (\xi) =  n^{a-b -3/2} \, \frac{\sqrt{n}}{2} \, \left[\int_{-n/2}^{n/2} \frac{\Omega  \big( \tfrac{\xi -\ell}{n}, \tfrac{\ell}{n}\big)^2}{ \gamma^{-1} n^b \Lambda \big( \tfrac{\xi - \ell}{n}, \tfrac{\ell}{n}\big )-i\Omega \big( \tfrac{\xi - \ell}{n}, \tfrac{\ell}{n}\big)} \, d\ell \right]\; {\widehat f_n} (\xi).
\end{equation*}
By the inverse Fourier transform we get that
\begin{equation*}
q_n\big(\tfrac{x}{n}\big)=2  n^{a-b}  \int_{-n /2}^{n/2}\; e^{- \tfrac{2i\pi \xi x}{n}} G_n \big( \tfrac{\xi}{n}\big)  {\widehat f}_n (\xi)   \; d\xi,
\end{equation*}
where 
\begin{equation}
\label{eq:fF}
G_n (y)= \frac{1}{4} \int_{-1/2}^{1/2} \cfrac{\Omega(y-z,z)^2 }{\gamma^{-1} n^b \Lambda (y-z,z) -i\, \Omega (y-z,z)}\, dz.
\end{equation}
By Lemma \ref{lem:G0345} we have that
\begin{equation}
\begin{split}
&G_n (y) = \cfrac{1}{n^{b/2} \sqrt{2 \gamma}} |\sin(\pi y)|^{3/2} \; e^{i {\rm{sgn}} (y) \tfrac{\pi}{4}} + {\mc O} ( \sin^2 (\pi y) ) \quad \text{if} \quad b<1, \\
& G_n (y) = {\mc O} \Big( n^{-b} |\sin (\pi y)|\Big)\quad \text{if} \quad b\ge1.
\end{split}
\end{equation}
By using Lemma \ref{lem:sfp}, we obtain easily the result. 
\end{proof}

\section{Proof of Lemma \ref{lem:5,7.}}

We start by computing the Fourier transform of $v_n$. Recall that $v_n $ is solution of \eqref{eq:poisson-v}. Applying the Fourier transform we get that
\begin{equation*}
\widehat{\Delta_n v_n} (k,\ell) + \gamma n^{1-b}\widehat{ \mc A_n v_n}(k,\ell)  =\widehat{ {\widetilde {\mc D}}_n h_n}.
\end{equation*}
Note that since $h_n$ does not depend on $a$ it is the same for $v_n$.
From \eqref{ft_of_delta_n} and \eqref{ft_of_a_n} the left hand side of the previous display is given by
\begin{equation*}
-n^{2}\Lambda\big(\tfrac{k}{n}, \tfrac{\ell}{n}\big) \widehat{v_n} (k,\ell) + i\gamma n^{2-b}\Omega \big(\tfrac{k}{n}, \tfrac{\ell}{n}\big)\widehat{ v_n}\big(k,\ell) .
\end{equation*}
Now we need to compute the Fourier transform of 
${\widetilde{\mc D}}_n h_n$.
By a simple computation we get that
\begin{equation}\label{eq:D_tilde}
\begin{split}
\widehat{\widetilde{\mc D}_n{h}_n}(k,l) =& \sum_{x\in\mathbb{Z}}\Big\{ {\widetilde{\mc E}}_n h  \big(\tfrac{x}{n}\big)-\Big(\tfrac{1-k}{2}\Big) {\widetilde{\mc F}}_n h\big(\tfrac{x}{n}\big)\Big\}\Big\{e^{\tfrac{2\pi i (kx + \ell (x+1))}{n}}+e^{\tfrac{2 i \pi (k(x+1)+\ell x)}{n}}\Big\}\\
=&n \Big\{e^{\tfrac{2i\pi\ell}{n}}+e^{\tfrac{2i\pi k}{n}}\Big\}\Big\{\widehat{{\widetilde{\mc E}}_n h }\,  (k+\ell)-\Big(\tfrac{1-k}{2}\Big)\widehat{{\widetilde{\mc F}}_n h}\, (k+\ell)\Big\},
\end{split}
\end{equation}
\textcolor{black}{where ${\widetilde{\mc E}}_n h$ and ${\widetilde{\mc F}}_n h$ were given in \eqref{eq:En} and \eqref{eq:Fn}, respectively.} 
From the previous computations we conclude  that the Fourier transform ${\widehat v}_n$ is given by
\begin{equation}
\label{FT of vn}
\begin{split}
{\widehat v}_n (k, \ell) & = -\cfrac{1}{n} \, \cfrac{e^{ \tfrac{2i \pi k}{n}} +e^{\tfrac{2i \pi \ell}{n}} }{  \Lambda\big( \tfrac{k}{n}, \tfrac{\ell}{n}\big) -i\gamma n^{-b}\, \Omega\big( \tfrac{k}{n}, \tfrac{\ell}{n}\big) } \, \Big\{\widehat{{\widetilde{\mc E}}_n h }\, (k+\ell)-\Big(\tfrac{1-k}{2}\Big)\widehat{{\widetilde{\mc F}}_n h }\, (k+\ell)\}\\
&=- \gamma^{-1} n^{b-1}  \, \cfrac{e^{ \tfrac{2i \pi k}{n}} +e^{\tfrac{2i \pi \ell}{n}} }{  \gamma^{-1} n^b \Lambda\big( \tfrac{k}{n}, \tfrac{\ell}{n}\big) -i \, \Omega\big( \tfrac{k}{n}, \tfrac{\ell}{n}\big) } \, \Big\{\widehat{{\widetilde{\mc E}}_n h }\, (k+\ell)-\Big(\tfrac{1-k}{2}\Big)\widehat{{\widetilde{\mc F}}_n h }\, (k+\ell)\}.
\end{split}
\end{equation}
By Lemma \ref{lem:cov} \textcolor{black}{and \eqref{eq:En}} we have that the Fourier transform of ${{\widetilde{\mc E}}_n h }\, $ is given by
\begin{equation}\label{exp_imp_1}
\begin{split}
\widehat{{\widetilde{\mc E}}_n h }\,  (\xi) &= \frac{1}{n} \sum_{x\in\mathbb{Z}} e^{ \tfrac{2i\pi \xi x}{n}} \Big(h_n \big( \tfrac{x}{n} , \tfrac{x+1}{n} \big) -h_n \big( \tfrac{x}{n} , \tfrac{x}{n} \big)\Big)\\
&= \frac{1}{n} \sum_{x\in\mathbb{Z}} e^{ \tfrac{2i\pi \xi x}{n}} \iint_{[-\tfrac{n}{2}, \tfrac{n}{2}]^2} {\widehat h}_n (k,\ell) e^{- \tfrac{2 i \pi (k+\ell) x}{n}} \big\{ e^{- \tfrac{2i\pi\ell}{n}} -1 \big\} \; dk d\ell \\
&= \frac{1}{n} \sum_{x\in\mathbb{Z}} e^{ \tfrac{2i\pi \xi x}{n}} \int_{-n/2}^{n/2} \; e^{- \tfrac{2 i \pi u x}{n}} \left\{ \int_{-n/2}^{n/2}  {\widehat h}_n (u-\ell,\ell)\big\{ e^{- \tfrac{2i\pi \ell}{n}} -1 \big\} d\ell \right\} du\\
&=  \int_{-n/2}^{n/2}  {\widehat h}_n (\xi-\ell,\ell)\big\{ e^{- \tfrac{2i\pi \ell}{n}} -1 \big\} \, d\ell.
\end{split}
\end{equation}
In the last line we used the inverse Fourier transform.
By (\ref{ecB.4}) we get that
\begin{equation}\label{ft_of_E_n_h}
\begin{split}
\widehat{{\widetilde{\mc E}}_n h }\,  (\xi) &= -\frac{1}{\sqrt{n}} \, {\widehat f}_n (\xi) \,  \int_{-n/2}^{n/2}  \frac{\big(1- e^{- \tfrac{2i\pi \ell}{n}} \big)\,  i\, \Omega \big( \tfrac{\xi - \ell}{n}, \tfrac{\vphantom{\xi}\ell}{n}\big)} { \gamma^{-1} n^b\Lambda\big( \tfrac{\xi - \ell}{n}, \tfrac{\vphantom{\xi}\ell}{n}\big) - i \, \Omega\big( \tfrac{\xi - \ell}{n}, \tfrac{\vphantom{\xi}\ell}{n}\big)}\, d\ell\\
&= -\sqrt{n} I_n \big( \tfrac{\xi}{n} \big) {\widehat f}_n (\xi),
\end{split}
\end{equation}
where the function $I_n $ is defined by
\begin{equation}
\label{eq:int_I}
I_n (y)= \int_{-1/2}^{1/2} \cfrac{(1-e^{-2i \pi x}) \, i \, \Omega (y-x,x)}{\gamma^{-1} n^b\Lambda(y-x,x) -i \Omega(y-x,x)} dx.
\end{equation}
Again, by Lemma \ref{lem:cov} \textcolor{black}{and \eqref{eq:Fn}}  the Fourier transform of $\widetilde{\mc F}_n h$ is given by
\begin{equation}\label{exp_imp_2}
\begin{split}
\widehat{{\widetilde{\mc F}}_n h }\,  (\xi) &= \frac{1}{n} \sum_{x\in\mathbb{Z}} e^{ \tfrac{2i\pi \xi x}{n}} \Big(h_n \big( \tfrac{x+1}{n} , \tfrac{x+1}{n} \big) -h_n \big( \tfrac{x}{n} , \tfrac{x}{n} \big)\Big)\\
&=\Big( e^{ -\tfrac{2i\pi \xi }{n}}-1\Big)\frac{1}{n} \sum_{x\in\mathbb{Z}} e^{ \tfrac{2i\pi \xi x}{n}} h_n \big( \tfrac{x}{n} , \tfrac{x}{n} \big)\\
&= \Big( e^{ -\tfrac{2i\pi \xi }{n}}-1\Big)\frac{1}{n} \sum_{x\in\mathbb{Z}} e^{ \tfrac{2i\pi \xi x}{n}} \iint_{[-\tfrac{n}{2}, \tfrac{n}{2}]^2} {\widehat h}_n (k,\ell) e^{- \tfrac{2 i \pi (k+\ell) x}{n}} \; dk d\ell \\
&=\Big( e^{ -\tfrac{2i\pi \xi }{n}}-1\Big) \frac{1}{n} \sum_{x\in\mathbb{Z}} e^{ \tfrac{2i\pi \xi x}{n}} \int_{-n/2}^{n/2} \; e^{- \tfrac{2 i \pi u x}{n}} \left\{ \int_{-n/2}^{n/2}  {\widehat h}_n (u-\ell,\ell) d\ell \right\} du\\
&= \Big( e^{ -\tfrac{2i\pi \xi }{n}}-1\Big)
 \int_{-n/2}^{n/2}  {\widehat h}_n (\xi-\ell,\ell)\, d\ell.
\end{split}
\end{equation}
By (\ref{ecB.4}) we get
\begin{equation}\label{ft_of_F_n_h}
\begin{split}
\widehat{{\widetilde{\mc F}}_n h }\,  (\xi) &= -\frac{1}{\sqrt{n}}\big(1- e^{- \tfrac{2i\pi \xi}{n}} \big) \, {\widehat f}_n (\xi) \,  \int_{-n/2}^{n/2}  \frac{ i\, \Omega \big( \tfrac{\xi - \ell}{n}, \tfrac{\vphantom{\xi}\ell}{n}\big)} { \gamma^{-1} n^b\Lambda\big( \tfrac{\xi - \ell}{n}, \tfrac{\vphantom{\xi}\ell}{n}\big) - i \, \Omega\big( \tfrac{\xi - \ell}{n}, \tfrac{\vphantom{\xi}\ell}{n}\big)}\, d\ell\\
&= -\sqrt{n}\big(1- e^{- \tfrac{2i\pi \xi}{n}} \big) \widetilde{I_n}\big( \tfrac{\xi}{n} \big) {\widehat f}_n (\xi),
\end{split}
\end{equation}
where the function $\widetilde{I_n}$ is defined by
\begin{equation}
\label{eq:int_tilde_I}
{\widetilde I_n} (y)= \int_{-1/2}^{1/2}\frac{i \, \Omega (y-x,x)}{\gamma^{-1} n^b\Lambda(y-x,x) -i \Omega(y-x,x)} dx.
\end{equation}

\subsection{Proof of \eqref{est vn}} \label{subsec:secondterm}

In order to compute the discrete $\bb L^2$ norm of $v_n$ we use the Plancherel-Parseval's relation, \textcolor{black}{\eqref{FT of vn}}, Lemma \ref{lem:cov}
and we have that 
\begin{equation*}
\begin{split}
\|v_n\|_{2,n}^2
		&= \iint_{[-\tfrac{n}{2},\tfrac{n}{2}]^2} |{\widehat v_n} (k, \ell)|^2 dk d\ell\\
		&= \frac{1}{n^2} \int_{-n/2}^{n/2} |\widehat{{\widetilde{\mc E}}_n h }\,(\xi)-\Big(\tfrac{1-k}{2}\Big)\widehat{{\widetilde{\mc F}}_n h }\,(\xi)|^2 \,  \int_{-n/2}^{n/2} \left|\cfrac{e^{ \tfrac{2i \pi(\xi - \ell)}{n} } +e^{ \tfrac{2 i \pi \ell}{n} }} {\Lambda\big(\tfrac{\xi -\ell}{n}, \tfrac{\vphantom{\xi}\ell}{n} \big) -i\gamma n^{-b} \Omega\big(\tfrac{\xi -\ell}{n}, \tfrac{\vphantom{\xi}\ell}{n} \big) }\,  \right|^2 d\ell d\xi\\
		&\le \frac{C}{n} \int_{-n/2}^{n/2} \Big| \widehat{{\widetilde{\mc E}}_n h }\, (\xi)-\Big(\tfrac{1-k}{2}\Big)\widehat{{\widetilde{\mc F}}_n h }\,(\xi)\Big|^2 W_n \big( \tfrac{\xi}{n}\big) d\xi,
\end{split}
\end{equation*}
where 
\begin{equation}
\label{eq:int_omega}
W_n(y)=\int_{-1/2}^{1/2}\frac{dx}{\Lambda(y-x,x)^2+\gamma^2 n^{-2b}\Omega(y-x,x)^2}.
\end{equation}
For $0<y<1/2$ we observe that since $a \le 2$, we can bound from above $W_n (y)$ by
\begin{equation}
W_n (y) \le n^{2b} \int_{-1/2}^{1/2}\frac{dx}{\Lambda(y-x,x)^2+\gamma^2 \Omega(y-x,x)^2}.
\end{equation}
This integral has been estimated in \textcolor{black} {Lemma F.5 of \cite{BGJ}} and is of order $|y|^{-3/2}$ for $y \to 0$. Therefore we have that
\begin{equation}
\label{eq:E10}
|W_n (y)| \le C \; n^{2b} \; |y|^{-3/2}.
\end{equation}


By the triangular inequality, in order to finish the proof,  we are reduced to show that
\begin{equation}\label{last_terms}
\begin{split}
 \frac{1}{n} \int_{-n/2}^{n/2}| \widehat{{\widetilde{\mc E}}_n h }\,(\xi)|^2 W_n \big( \tfrac{\xi}{n}\big) d\xi \quad \textrm{and} \quad
  \frac{1}{n} \int_{-n/2}^{n/2}| \widehat{{\widetilde{\mc F}}_n h }\,(\xi)|^2 W_n \big( \tfrac{\xi}{n}\big) d\xi
\end{split}
\end{equation} 
vanish as $n\to\infty$.
By \eqref{ft_of_E_n_h} the term at the left hand side of the previous display is equal to 
\begin{equation*}
\begin{split}
\int_{-n/2}^{n/2}|\widehat{f}_n(\xi)|^2 | I_n (\tfrac{\xi}{n})|^2 W_n \big( \tfrac{\xi}{n}\big) d\xi={n} \int_{-1/2}^{1/2}|\widehat{f}_n(ny)|^2 | I_n (y)|^2 W_n (y) dy.
\end{split}
\end{equation*}
By Lemma \ref{lem:in18} and Lemma \ref{lem:sfp} we have
\begin{equation*}
\begin{split}
&{n} \int_{-1/2}^{1/2}|\widehat{f}_n(ny)|^2 | I_n (y)|^2 W_n (y) dy \\
&\le C {n} \int_{-1/2}^{1/2} \tfrac{1}{1+|ny|^p} \tfrac{|y|^{3/2}}{|y| + n^{-b}} \, dy= 2C\, \tfrac{1}{\sqrt n} \,  \int_0^{n/2} \tfrac{1}{1+|z|^p} \cfrac{|z|^{3/2}}{|z| + n^{1-b}} \, dz\\
& \le 2C\, \tfrac{1}{\sqrt n} \,  \int_0^{n/2} \tfrac{|z|^{1/2}}{1+|z|^p} \, dz
\end{split}
\end{equation*}
which goes to $0$ as $n \to \infty$ by choosing $p  \ge 3/2$.
Finally, by \eqref{ft_of_F_n_h}, Lemma \ref{lem:in18} and Lemma \ref{lem:sfp}, the term at the right hand side of \eqref{last_terms} is bounded from above as
\begin{equation*}
\begin{split}
 & \int_{-n/2}^{n/2}\Big| 1-e^{\tfrac{2\pi i\xi}{n}}\Big|^2 |{\widetilde I_n} (\tfrac{\xi}{n})|^2|\widehat{f}_n(\tfrac{\xi}{n})|^2 W_n \big( \tfrac{\xi}{n}\big) d\xi\\
&= n \int_{-1/2}^{1/2}|\sin(\pi y)|^2|\widehat{f}_n(ny)|^2 | \widetilde {I_n}(y)|^2 W_n (y) dy\\
&\le C\, \tfrac{1}{\sqrt n} \,  \int_0^{n/2} \tfrac{|z|^{1/2}}{1+|z|^p} \, dz
\end{split}
\end{equation*}
which goes to $0$ as $n \to \infty$ by choosing $p  \ge 3/2$.

\subsection{Proof of \eqref{est der vn}}
\textcolor{black}{In order to compute the discrete $\bb L^2$ norm of  $ \mc D_n v_n$ we first  note that by simple computations together with Lemma \ref{lem:cov} we have that (see equation (E.7) in \cite{BGJ})}
\begin{equation}\label{eq:d_n}
\begin{split}
{\widehat{{\mc D}_n v_n}}(\xi) =n\Big(1-e^{\tfrac{2i\pi \xi }{n}}\Big)\; \int_{-\tfrac{n}{2}}^{\tfrac{n}{2}} {\widehat v_n} (\xi-\ell, \ell)e^{-\tfrac{2i\pi\ell}{n}} d\ell.
	\end{split}
\end{equation}
By \eqref{FT of vn} last expression equals to 
		\begin{equation}
\begin{split}
&{\widehat{{\mc D}_n v_n}}(\xi)\\
		&=-\big(1-e^{\tfrac{2i\pi \xi }{n}}\big)\Big\{\widehat{{\widetilde{\mc E}}_n h }\,(\xi)-\Big(\tfrac{1-k}{2}\Big)\widehat{{\widetilde{\mc F}}_n h }\,(\xi)\Big\}\int_{-\tfrac{n}{2}}^{\tfrac{n}{2}}\frac{1+e^{ \tfrac{2i \pi(\xi-2\ell)}{n}}}{ \Lambda \big( \tfrac{\xi-\ell}{n}, \tfrac{\vphantom{\xi}\ell}{n}\big) -i\gamma n^{-b} \Omega \big( \tfrac{\xi-\ell}{n}, \tfrac{\vphantom{\xi}\ell}{n}\big)}d\ell\\
		&=-{n}\big(1-e^{\tfrac{2i\pi \xi }{n}}\big)\Big\{\widehat{{\widetilde{\mc E}}_n h }\,(\xi)-\Big(\tfrac{1-k}{2}\Big)\widehat{{\widetilde{\mc F}}_n h }\,(\xi)\Big\} J_n \big (\tfrac{\xi}{n}\big),
\end{split}
\end{equation}
where  $J_n$ is given by
\begin{equation}
\label{eq:int_J}
J_n (y) = \int_{-1/2}^{1/2} \cfrac{1+e^{2i \pi (y-2x)}}{\Lambda(y-x,x)-i\gamma n^{-b}\Omega(y-x,x) } dx.
\end{equation}
Now, by using \eqref{ft_of_E_n_h} and \eqref{ft_of_F_n_h} we get
\begin{equation}
{\widehat{{\mc D}_n v_n}}(\xi)=\frac{n^{3/2}}{2}\big( 1- e^{ \tfrac{2i \pi \xi}{n}} \big)  {\widehat f}_n (\xi) I_n  \big (\tfrac{\xi}{n}\big) J_n \big (\tfrac{\xi}{n}\big)-4n^{3/2}\Big(\tfrac{1-k}{2}\Big) \sin^2(\tfrac{\pi \xi}{n}) \widetilde {I_n}(\tfrac{\xi}{n})\widehat{f}_n(\tfrac{\xi}{n})J_n(\tfrac{\xi}{n})
\end{equation}
where $I_n$ is defined by (\ref{eq:int_I}). Finally, by 
the Plancherel-Parseval's relation we have that
\begin{equation}
\label{eq:tronconeuse_0}
\begin{split}
\|n^{a-b -3/2} \,  \mc D_nv_n\|^2_{2,n}&\leq C n^{2(a-b)} \int_{-n/2}^{n/2} \sin^2 \big (\pi \tfrac{\xi}{n}\big )\big| {\widehat f}_n (\xi) \big|^2 \big| I_n \big (\tfrac{\xi}{n}\big) \big|^2 \big| J_n \big (\tfrac{\xi}{n}\big) \big|^2    d\xi
\\
&+C n^{2(a-b)} \int_{-n/2}^{n/2} |\sin \big (\pi \tfrac{\xi}{n}\big )|^4\big| {\widehat f}_n (\xi) \big|^2 \big|\widetilde{I_n} \big (\tfrac{\xi}{n}\big) \big|^2 \big| J_n \big (\tfrac{\xi}{n}\big) \big|^2    d\xi\\
&=C  n^{2(a-b)+1} \int_{-1/2}^{1/2} |\sin (\pi y)|^2 | I_n (y)|^2 | J_n (y)|^2 |\widehat f_n (ny)|^2 dy\\
&+C n^{2(a-b)+1} \int_{-1/2}^{1/2} |\sin \big (\pi y\big )|^4 \big|\widetilde{I_n} (y) \big|^2 \big| J_n (y) \big|^2 \big| {\widehat f}_n (ny) \big|^2   dy.
\end{split}
\end{equation}
By using Lemma \ref{lem:j_nint}, Lemma \ref{lem:in18} and Lemma \ref{lem:sfp}, choosing a $p$ sufficiently large, we can bound the first term at the right hand side of (\ref{eq:tronconeuse_0}) by a constant times
\begin{equation}
\label{eq:tournedos}
\begin{split}
 n^{2a -4b +1} \int_0^{1/2} \tfrac{y^4}{1+ (ny)^p} \tfrac{1}{(|y| +n^{-b})^2} \, dy = n^{2a -2 -4b} \int_{0}^{\infty} \tfrac{z^4}{1+z^p} \tfrac{1}{(z + n^{1-b})^2}\, dz. 
\end{split}
\end{equation}
If $b \le 1$, the previous integral  is bounded from above by 
$$n^{2a -4 -2b} \int_{0}^{\infty} \tfrac{z^4}{1+z^p}\, dz$$
which goes to $0$ since $a=\inf (3/2 -3/2b, 2)$.  If $b > 1$, the integral (\ref{eq:tournedos})  is bounded from above by 
$$n^{2a -2 -4b} \int_{0}^{\infty} \tfrac{z^2}{1+z^p}\, dz$$
which goes to $0$ since $a=2$ in this case. 
 
The second term on the right hand side  of (\ref{eq:tronconeuse_0}) is proved to go to $0$ similarly by using Lemma \ref{lem:j_nint}, Lemma \ref{lem:in18} and Lemma \ref{lem:sfp}. This completes the proof of \eqref{est der vn}.

\section{Proof of \eqref{new_da}}

\begin{lemma}\label{lem:new_da}
Let $v_n:\frac{1}{n}\bb Z^2\to\bb R$ be the solution of  
\eqref{eq:poisson-v}
and let $a=\inf(3/2+3b/2, 2)$ and $b\in(0,1)$. For any $T>0$ we have that
 \begin{equation*}
\lim_{n\to\infty} \bb E\Big[\Big(\int_0^T Q_t^n (n^{a-2}\widetilde{\mc D}_n v_n)\,dt\Big)^2 \Big]=0.
 \end{equation*}
\end{lemma}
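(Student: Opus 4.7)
The plan is to iterate the bootstrap trick used in the proof of Lemma \ref{lem:pasteque}, this time taking $v_n$ as the input instead of $h_n$. The point is that $v_n$ is itself obtained from one inversion of the symbol of the Poisson operator \eqref{eq:poisson-v}, so its Fourier transform carries extra decay compared to $\widehat{h_n}$; one further inversion should therefore produce a function for which the elementary a priori bounds \eqref{apriori_bound_S} and \eqref{apriori_bound_Q} are already enough to conclude, so that no further iteration is needed.

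Concretely, I would define $w_n:\tfrac{1}{n}\bb Z^2\to\bb R$ as the unique $\ell^2$ solution of the Poisson equation
\begin{equation*}
n^{a-2}\Delta_n w_n\big(\tfrac{x}{n},\tfrac{y}{n}\big) + \gamma n^{a-1-b}\mc A_n w_n\big(\tfrac{x}{n},\tfrac{y}{n}\big) = n^{a-2}\widetilde{\mc D}_n v_n\big(\tfrac{x}{n},\tfrac{y}{n}\big).
\end{equation*}
Applying \eqref{ec3.15} to $h=w_n$ and integrating in time over $[0,T]$ yields the identity
\begin{equation*}
\begin{split}
\int_0^T Q_t^n\big(n^{a-2}\widetilde{\mc D}_n v_n\big)\,dt & = 2\gamma n^{a-b-3/2} \int_0^T \mc S_t^n(\mc D_n w_n)\,dt \\
& \quad - 2\int_0^T Q_t^n\big(n^{a-2}\widetilde{\mc D}_n w_n\big)\,dt + Q_T^n(w_n) - Q_0^n(w_n).
\end{split}
\end{equation*}
Squaring, using $(x_1+x_2+x_3)^2 \le 3(x_1^2+x_2^2+x_3^2)$ and Cauchy--Schwarz in time, the lemma reduces to the three $\bb L^2$ estimates
\begin{equation*}
\|w_n\|_{2,n}^2 \to 0, \qquad \|n^{a-b-3/2}\mc D_n w_n\|_{2,n}^2\to 0, \qquad \|n^{a-2}\widetilde{\mc D}_n w_n\|_{2,n}^2\to 0,
\end{equation*}
which, combined with \eqref{apriori_bound_S} and \eqref{apriori_bound_Q}, control each of the three terms on the right-hand side.

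Each of these bounds would be proved by Fourier analysis, following closely the scheme carried out for $v_n$ in Appendix \ref{subsec:secondterm}. Starting from the explicit expression \eqref{FT of vn} for $\widehat{v_n}$, I would compute $\widehat{\widetilde{\mc D}_n v_n}$ by the same manipulations that gave \eqref{exp_imp_1}--\eqref{exp_imp_2}, divide by the symbol $-n^2\Lambda + i\gamma n^{2-b}\Omega$ to obtain an explicit formula for $\widehat{w_n}$, and bound the resulting oscillatory integrals using the estimates of Lemma \ref{lem:in18}, Lemma \ref{lem:j_nint}, Lemma \ref{lem:sfp}, together with the bound \eqref{eq:E10} on $W_n$. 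Compared with the analysis of $v_n$ itself, the extra pass through the Poisson kernel produces an additional vanishing factor near the origin in Fourier space, which is what allows the three norms above to tend to zero on the whole range $b\in(0,1)$.

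The main obstacle is the delicate bookkeeping of the powers of $n$ entering the symbol (through the factor $n^{-b}$) and the time-scale exponent $a=\inf(3/2+3b/2,2)$, which changes regime at $b=1/3$; one must verify that the two regimes match at this crossover value and that all estimates remain uniform as $b\to 1^-$. The restriction $b<1$ is natural and mirrors that of Lemma \ref{lem:pasteque}: at $b=1$ the gain obtained from one further inversion of the Poisson operator is exactly cancelled by the growth of the symbol, and the argument breaks down. Once these Fourier-side bounds are obtained, the remaining steps are a routine repetition of the computations already performed for $v_n$.
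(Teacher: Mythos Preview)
Your proposal is correct and follows essentially the same route as the paper: define $w_n$ via the Poisson equation with right-hand side $n^{a-2}\widetilde{\mc D}_n v_n$, apply \eqref{ec3.15} and integrate in time, then reduce to the three $\ell^2$ estimates $\|w_n\|_{2,n}\to 0$, $\|n^{a-b-3/2}\mc D_n w_n\|_{2,n}\to 0$, and $\|n^{a-2}\widetilde{\mc D}_n w_n\|_{2,n}\to 0$, proved by Fourier analysis. The only addition worth noting is that the computation of $\widehat{\widetilde{\mc D}_n v_n}$ introduces new auxiliary integrals $K_n$ and $\widetilde{K}_n$ (analogous to $I_n$, $\widetilde{I}_n$) whose estimates (Lemma~\ref{lem:K_nint}) you will need alongside those you list.
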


\begin{proof}
In order to prove the result we do the following. We use again (\ref{ec3.15}) for the solution $w_n$ of the Poisson equation
\begin{equation}
\label{eq:poisson-w}
n^{a-2} \Delta_n w_n \big(\tfrac{\vphantom{y}x}{n},\tfrac{y}{n}\big) + \gamma n^{a-1-b} \mc A_n w_n\big(\tfrac{\vphantom{y}x}{n},\tfrac{y}{n}\big)  =n^{a-2} {\widetilde {\mc D}}_n v_n.
\end{equation}

Then, by integrating in time \eqref{ec3.15} we have
\begin{equation*}
\begin{split}
\int_0^T Q_t^n \big( n^{a-2} {\widetilde{\mc D}}_n v_n \big) dt &= 2\gamma n^{a-b-3/2}  \int_0^T {\mc S}_t^n ({\mc D}_n w_n) dt -2 \int_{0}^T {Q}_t^n  \big( n^{a-2} {\widetilde{\mc D}}_n w_n \big) dt \\
&+ Q_T^n (w_n) -Q_0^n (w_n).
\end{split}
\end{equation*}
Now, by using repeatedly the inequality $(x+y)^2\leq 2x^2+2y^2$ in order to conclude we have to show that 
 \begin{equation*}
\bb E\Big[\Big(n^{a-b-3/2}  \int_0^T {\mc S}_t^n ({\mc D}_n w_n)\,dt\Big)^2 \Big],
 \end{equation*} 
and 
\begin{equation*}
\bb E\Big[\Big( Q_T^n (w_n) -Q_0^n (w_n)\Big)^2 \Big]
 \end{equation*}
 and
 \begin{equation*}
\bb E\Big[\Big(\int_{0}^T {Q}_t^n  \big( n^{a-2} {\widetilde{\mc D}}_n w_n \big)\, dt \Big)^2 \Big]
 \end{equation*}
 vanish as $n$ goes to infinity. The first display above, by \eqref{apriori_bound_S} and \eqref{est der wn}, vanishes, as $n\to\infty$. Similarly the second (resp. third display), by \eqref{apriori_bound_Q} and \eqref{est wn} (resp. \eqref{est tilde d wn}), vanishes as $n\to\infty$.
\end{proof}

Therefore the previous lemma depends on the following estimates on $w_n$.
\begin{lemma}
\label{lem:est wn}
Let $a=\inf(3/2+3b/2, 2)$ and $b\in (0,1)$. The solution $w_n$ of \eqref{eq:poisson-w} satisfies
\begin{equation}
\label{est wn}
\lim_{n \to \infty}\|w_n\|_{2,n}^2=0,
\end{equation}
\begin{equation}\label{est der wn}
\lim_{n \to \infty}\|n^{a-b -3/2} \,  \mc D_nw_n\|^2_{2,n}=0,
\end{equation}
\begin{equation}\label{est tilde d wn}
\lim_{n \to \infty} \|n^{a-2} \,  \tilde{\mc D_n}w_n\|^2_{2,n}=0.
\end{equation}
\end{lemma}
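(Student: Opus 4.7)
The plan is to repeat the Fourier-analytic strategy of Lemma \ref{lem:5,7.}, iterating one level deeper with $v_n$ playing the role that $h_n$ had there. First, taking the Fourier transform of \eqref{eq:poisson-w} and using \eqref{ft_of_delta_n} and \eqref{ft_of_a_n}, I get
\begin{equation*}
\widehat{w}_n(k,\ell) = -\, \frac{n^{-2}\, \widehat{\widetilde{\mc D}_n v_n}(k,\ell)}{\Lambda\bigl(\tfrac{k}{n},\tfrac{\ell}{n}\bigr) - i\gamma n^{-b}\, \Omega\bigl(\tfrac{k}{n},\tfrac{\ell}{n}\bigr)},
\end{equation*}
which is the exact analogue of \eqref{FT of vn}. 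The numerator is expanded using the identity \eqref{eq:D_tilde} applied to $v_n$, which reduces the problem to computing $\widehat{\widetilde{\mc E}_n v_n}(\xi)$ and $\widehat{\widetilde{\mc F}_n v_n}(\xi)$; repeating the calculations \eqref{exp_imp_1}--\eqref{exp_imp_2}, these become one-dimensional integrals in $\ell$ of $\widehat{v}_n(\xi-\ell,\ell)$ against oscillatory weights of the form $1-e^{-2i\pi\ell/n}$ and $1-e^{-2i\pi\xi/n}$. Substituting \eqref{FT of vn}, \eqref{ft_of_E_n_h} and \eqref{ft_of_F_n_h}, I obtain a factorization $\widehat{w}_n(k,\ell) = \widehat{f}_n(k+\ell)\cdot K_n(k,\ell)$, where the kernel $K_n$ is an explicit product of two nested resolvent factors of the form $(\Lambda - i\gamma n^{-b}\Omega)^{-1}$ together with several $\sin$-type factors produced by the two $\widetilde{\mc D}_n$'s.

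The three norms \eqref{est wn}--\eqref{est tilde d wn} are then computed by Plancherel-Parseval combined with the change of variables $\xi=k+\ell$ from Lemma \ref{lem:cov}. For \eqref{est wn} this is direct. For \eqref{est der wn} one first applies \eqref{eq:d_n} to $w_n$ in order to introduce the kernel $J_n$, and for \eqref{est tilde d wn} one mimics \eqref{exp_imp_1}--\eqref{exp_imp_2} one more time, starting from $w_n$ itself. In each case the result is a one-dimensional integral over $y=\xi/n \in [-1/2,1/2]$ of $|\widehat{f}_n(ny)|^2$ times an explicit weight, which I would bound using Lemma \ref{lem:in18} (for $I_n$ and $\widetilde I_n$), Lemma \ref{lem:j_nint} (for $J_n$), the estimate \eqref{eq:E10} (for $W_n$, used here to absorb the inner resolvent after integration in the inner variable), and Lemma \ref{lem:sfp} (to exploit the fast decay of $\widehat{f}_n$).

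The main obstacle is the bookkeeping of exponents of $n$. Each of the two nested resolvent factors can produce up to $n^b$ near the singular point $y=0$, so in the worst case one pays $n^{2b}$; this must be absorbed by the combined $\sin$-power gained from the two $\widetilde{\mc D}_n$'s and, for \eqref{est der wn} and \eqref{est tilde d wn}, from the additional $\mc D_n$ or $\widetilde{\mc D}_n$ sitting outside $w_n$. In the spirit of \eqref{eq:tournedos}, the resulting one-dimensional integrals take the shape $n^{2a-4-c b}\int_0^\infty z^{\beta}(z+n^{1-b})^{-2}(1+z^p)^{-1}\,dz$ for explicit $\beta$ and $c>0$; the assumption $b<1$ gives $n^{1-b}\to\infty$ which, together with $a=\inf(3/2+3b/2,2)$, suffices to drive all three integrals to zero once $p$ is chosen large enough. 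No new analytic idea beyond those of Lemma \ref{lem:5,7.} is needed; the challenge is purely to carry through the nested Fourier calculus without losing powers of $n$.
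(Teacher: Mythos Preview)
Your plan is correct and matches the paper's proof: iterate the Fourier calculus of Lemma~\ref{lem:5,7.} one level deeper, with $v_n$ playing the role of $h_n$, then bound the resulting one-dimensional integrals in $y=\xi/n$ via Lemmas~\ref{lem:in18}, \ref{lem:j_nint}, \ref{lem:sfp} and \eqref{eq:E10}. One clarification: when you compute $\widehat{\widetilde{\mc E}_n v_n}$ and $\widehat{\widetilde{\mc F}_n v_n}$, the $\ell$-integration of the $v_n$-resolvent against the weights $(e^{-2i\pi\ell/n}-1)(e^{2i\pi(\xi-\ell)/n}+e^{2i\pi\ell/n})$ and $e^{2i\pi(\xi-\ell)/n}+e^{2i\pi\ell/n}$ does \emph{not} produce another copy of $W_n$ but two new scalar kernels $K_n(y)$ and $\widetilde K_n(y)$ (the paper isolates these in Lemma~\ref{lem:K_nint}); however, a one-line residue check shows $K_n(y)=-\gamma^{-1}n^{b}(1-\bar w)^{-1}I_n(y)$ and $\widetilde K_n(y)=\gamma^{-1}n^{b}(1-\bar w)^{-1}\widetilde I_n(y)$, so their bounds are immediate from Lemma~\ref{lem:in18} and your power-counting with $a\le 2$, $b<1$ goes through exactly as you describe.
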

\begin{proof}
We start by computing the Fourier transform of $w_n$. Repeating the computations done for  \eqref{FT of vn} and recalling that  $w_n $ is solution of \eqref{eq:poisson-w}, we obtain 
\begin{equation}
\label{FT of wn}
\begin{split}
{\widehat {w}}_n (k, \ell) & = -\cfrac{1}{n} \, \cfrac{e^{ \tfrac{2i \pi k}{n}} +e^{\tfrac{2i \pi \ell}{n}} }{  \Lambda\big( \tfrac{k}{n}, \tfrac{\ell}{n}\big) -i\gamma n^{-b}\, \Omega\big( \tfrac{k}{n}, \tfrac{\ell}{n}\big) } \, \Big\{\widehat{\widetilde{ \mc{E}_n}v_n}(k+\ell)-\Big(\tfrac{1-k}{2}\Big){\widehat{\widetilde{ \mc{F}_n}v_n}}(k+\ell)\Big\}
\end{split}
\end{equation}
\textcolor{black}{where ${\widetilde{\mc E}}_n v_n$ and ${\widetilde{\mc F}}_n v_n$
are defined as in \eqref{eq:En} and \eqref{eq:Fn}
 with $h_n$ replaced with $v_n$.}\\

$\bullet$ We start by proving \eqref{est wn}. As in Section \ref{subsec:secondterm} we have that 
\begin{equation*}
\begin{split}
&\|w_n\|_{2,n}^2
		\le \frac{C}{n} \int_{-n/2}^{n/2} \Big|\widehat{\widetilde{ \mc{E}_n}v_n}(\xi)-\Big(\tfrac{1-k}{2}\Big)\widehat{\widetilde{ \mc{F}_n}v_n}(\xi)\Big|^2 W_n \big( \tfrac{\xi}{n}\big) d\xi
\end{split}
\end{equation*}
where $W_n$ is given in \eqref{eq:int_omega}.
By the triangular inequality, in order to finish the proof,  we are reduced to show that
\begin{equation}\label{last_terms_new}
\begin{split}
 \frac{1}{n} \int_{-n/2}^{n/2}|\widehat{\widetilde{ \mc{E}_n}v_n}(\xi)|^2 \, W_n \big( \tfrac{\xi}{n}\big) d\xi \quad \textrm{and} \quad
  \frac{1}{n} \int_{-n/2}^{n/2}| \widehat{\widetilde{ \mc{F}_n}v_n}(\xi)|^2\,  W_n \big( \tfrac{\xi}{n}\big) d\xi
\end{split}
\end{equation} 
vanish as $n\to\infty$.
Now we compute  the  Fourier transform of the previous functions. As
in \eqref{exp_imp_1} and using \eqref{FT of vn} we have that
\begin{equation}\label{ft_of_E_n_v}
\begin{split}
\widehat{\widetilde{ \mc{E}_n}v_n} (\xi) &= - \cfrac{1}{n} \,\int_{-n/2}^{n/2}  \cfrac{e^{ \tfrac{2i \pi (\xi-\ell)}{n}} +e^{\tfrac{2i \pi \ell}{n}} }{  \Lambda\big( \tfrac{\xi-\ell}{n}, \tfrac{\ell}{n}\big) -i\gamma n^{-b}\, \Omega\big( \tfrac{\xi-\ell}{n}, \tfrac{\ell}{n}\big) } \; \widehat{\widetilde{ \mc{E}_n}h_n}(\xi)\big\{ e^{- \tfrac{2i\pi \ell}{n}} -1 \big\} \, d\ell\\
&+ \Big(\tfrac{1-\kappa}{2}\Big) \, \cfrac{1}{n} \,\int_{-n/2}^{n/2}  \cfrac{e^{ \tfrac{2i \pi (\xi-\ell)}{n}} +e^{\tfrac{2i \pi \ell}{n}} }{  \Lambda\big( \tfrac{\xi-\ell}{n}, \tfrac{\ell}{n}\big) -i\gamma n^{-b}\, \Omega\big( \tfrac{\xi-\ell}{n}, \tfrac{\ell}{n}\big) }\;  \widehat{\widetilde{ \mc{F}_n}h_n}(\xi)\big\{ e^{- \tfrac{2i\pi \ell}{n}} -1 \big\} \, d\ell\\
&=\sqrt n \,  K_n(\tfrac{\xi}{n})\; \Big\{ I_n(\tfrac{\xi}{n})-\Big(\tfrac{1-\kappa}{2}\Big)\big(1-e^{-\frac{2\pi i \xi}{n}}\big)\, {\widetilde I}(\tfrac{\xi}{n})\Big\} \; \widehat{f}_n(\xi),
\end{split}
\end{equation}
where above we used \eqref{ft_of_E_n_h}, \eqref{ft_of_F_n_h} and where $K_n$ is given by
\begin{equation}
\label{eq:int_K}
K_n(y) = \int_{-1/2}^{1/2} \cfrac{(e^{-2i \pi x} -1) (e^{2i \pi (y-x)}+ e^{2i \pi x}  )}{\Lambda(y-x,x) -i\gamma n^{-b} \Omega(y-x,x) }\, dx.
\end{equation}
Now, as in  \eqref{exp_imp_2}  and using \eqref{FT of vn} we have that
\begin{equation}\label{ft_of_F_n_v}
\begin{split}
&{\widehat {\widetilde{\mc F}_n v}}_n (\xi) =    -\cfrac{1}{n} \, \Big( e^{ -\tfrac{2i\pi \xi }{n}}-1\Big)\int_{-n/2}^{n/2} \, \cfrac{e^{ \tfrac{2i \pi (\xi-\ell)}{n}} +e^{\tfrac{2i \pi \ell}{n}} }{  \Lambda\big( \tfrac{\xi-\ell}{n}, \tfrac{\ell}{n}\big) -i\gamma n^{-b}\, \Omega\big( \tfrac{\xi-\ell}{n}, \tfrac{\ell}{n}\big) }\; {\widehat {\widetilde{\mc E}_n h_n}} (\xi) \, d\ell\\
&\quad \quad  + \cfrac{1}{n}\Big(\tfrac{1-\kappa}{2}\Big) \Big( e^{ -\tfrac{2i\pi \xi }{n}}-1\Big)\int_{-n/2}^{n/2}  \, \cfrac{e^{ \tfrac{2i \pi (\xi-\ell)}{n}} +e^{\tfrac{2i \pi \ell}{n}} }{  \Lambda\big( \tfrac{\xi-\ell}{n}, \tfrac{\ell}{n}\big) -i\gamma n^{-b}\, \Omega\big( \tfrac{\xi-\ell}{n}, \tfrac{\ell}{n}\big) } \; {\widehat {\widetilde{\mc F}_n h_n}} (\xi) \, d\ell\\
&= \sqrt n \; \big( e^{ -\tfrac{2i\pi \xi }{n}}-1\big)\; \widetilde{K_n} (\tfrac{\xi}{n})\; \Big\{I_n(\tfrac{\xi}{n})-\Big(\tfrac{1-k}{2}\Big)\Big(1-e^{-\frac{2\pi i \xi}{n}}\Big)\, {\widetilde I_n}(\tfrac{\xi}{n})\Big\} \, \widehat{f}_n(\xi),
\end{split}
\end{equation}
where above we used \eqref{ft_of_E_n_h}, \eqref{ft_of_F_n_h} and $\widetilde{K_n}$ is given by
\begin{equation}
\label{eq:int_K}
\widetilde{K_n} (y) = \int_{-1/2}^{1/2} \cfrac{ e^{2i \pi (y-x)}+ e^{2i \pi x}  }{\Lambda(y-x,x) -i\gamma n^{-b} \Omega(y-x,x) }\, dx.
\end{equation}
Now we estimate the term at the left hand side of \eqref{last_terms}, which,
by \eqref{ft_of_E_n_v} and the triangular inequality, can be bounded from above by the sum of the two terms below. The first one is equal to
\begin{equation*}
\begin{split}
&\int_{-n/2}^{n/2}|\widehat{f}_n(\xi)|^2 | I_n (\tfrac{\xi}{n})|^2  | K_n (\tfrac{\xi}{n})|^2W_n \big( \tfrac{\xi}{n}\big) d\xi\\
=&{n} \int_{-1/2}^{1/2}|\widehat{f}_n(ny)|^2 | I_n (y)|^2| K_n (y)|^2 W_n (y) dy,
\end{split}
\end{equation*}
which 
by Lemma \ref{lem:in18}, Lemma \ref{lem:K_nint}, Lemma \ref{lem:sfp} and (\ref{eq:E10}) is bounded from above by
\begin{equation*}
\begin{split}
& C {n} \int_{-1/2}^{1/2} \tfrac{1}{1+|ny|^p} \tfrac{|y|^{5/2}}{(|y| + n^{-b})^2} \, dy= 2C\, \tfrac{1}{\sqrt n} \,  \int_0^{n/2} \tfrac{1}{1+|z|^p} \cfrac{|z|^{5/2}}{(|z| + n^{1-b})^2} \, dz\\
& \le 2C\, \tfrac{1}{\sqrt n} \,  \int_0^{n/2} \tfrac{|z|^{1/2}}{1+|z|^p} \, dz
\end{split}
\end{equation*}
and goes to $0$ as $n \to \infty$ by choosing $p  \ge 3/2$. The second one is 
\begin{equation*}
\begin{split}
&\int_{-n/2}^{n/2}|\widehat{f}_n(\xi)|^2 | \widetilde{I_n} (\tfrac{\xi}{n})|^2 \Big|1-e^{-\tfrac{2\pi i\xi}{n}}\Big|^2 | K_n (\tfrac{\xi}{n})|^2W_n \big( \tfrac{\xi}{n}\big) d\xi\\=&{n} \int_{-1/2}^{1/2}|\widehat{f}_n(ny)|^2 | \widetilde{I_n} (y)|^2 \Big|1-e^{-{2\pi  iy}}\Big|^2| K_n (y)|^2 W_n (y) dy,
\end{split}
\end{equation*}
which Lemma \ref{lem:in18}, Lemma \ref{lem:K_nint}, Lemma \ref{lem:sfp} and (\ref{eq:E10}) is bounded from above by
\begin{equation*}
\begin{split}
& C {n} \int_{-1/2}^{1/2} \tfrac{1}{1+|ny|^p} \tfrac{|y|^{5/2}}{(|y| + n^{-b})^2} \, dy= 2C\, \tfrac{1}{\sqrt n} \,  \int_0^{n/2} \tfrac{1}{1+|z|^p} \cfrac{|z|^{5/2}}{(|z| + n^{1-b})^2} \, dz\\
& \le 2C\, \tfrac{1}{\sqrt n} \,  \int_0^{n/2} \tfrac{|z|^{1/2}}{1+|z|^p} \, dz
\end{split}
\end{equation*}
and goes to $0$ as $n \to \infty$ by choosing $p  \ge 3/2$. Therefore we have shown that the first term in \eqref{last_terms} goes to $0$ as $n$ goes to infinity. The estimate for the term at the right hand side of \eqref{last_terms} is similar and this proves (\ref{est wn}).\\

$\bullet$ Now we prove  \eqref{est der wn}.
As in \eqref{eq:d_n} together with \eqref{FT of wn} we have that
\begin{equation}
\begin{split}
&{\widehat{{\mc D}_n  w_n}}(\xi)=-{n}\big(1-e^{\tfrac{2i\pi \xi }{n}}\big)\Big\{{\widehat {\widetilde{\mc E}_n v}}_n (\xi) -\Big(\tfrac{1-k}{2}\Big){\widehat {\widetilde{\mc F}_n v}}_n (\xi) \Big\} J_n \big (\tfrac{\xi}{n}\big),
\end{split}
\end{equation}
where  $J_n$ is given in \eqref{eq:int_J}. Now, by using \eqref{ft_of_E_n_v} and \eqref{ft_of_F_n_v} we get
\begin{equation}
\begin{split}
{\widehat{{\mc D}_n w_n}}(\xi)&=-n^{3/2}\big( 1- e^{ \tfrac{2i \pi \xi}{n}} \big)  K_n  \big (\tfrac{\xi}{n}\big)I_n  \big (\tfrac{\xi}{n}\big)J_n(\tfrac{\xi}{n})\;   {\widehat f}_n (\xi)\\
+&4n^{3/2}\Big(\tfrac{1-k}{2}\Big) \sin^2(\tfrac{\pi \xi}{n}) K_n  \big (\tfrac{\xi}{n}\big) \widetilde {I_n}(\tfrac{\xi}{n}) J_n(\tfrac{\xi}{n})\,  \widehat{f}_n({\xi})\\
-&4n^{3/2}\Big(\tfrac{1-k}{2}\Big) \sin^2(\tfrac{\pi \xi}{n})\widetilde  K_n  \big (\tfrac{\xi}{n}\big){I_n}(\tfrac{\xi}{n})J_n(\tfrac{\xi}{n})\ ,\widehat{f}_n({\xi})\\
+&4n^{3/2}\Big(\tfrac{1-k}{2}\Big)^2 \sin^2(\tfrac{\pi \xi}{n})  \big( 1- e^{ \tfrac{-2i \pi \xi}{n}} \big)   \widetilde  K_n  \big (\tfrac{\xi}{n}\big){\widetilde I_n}(\tfrac{\xi}{n})J_n(\tfrac{\xi}{n}) \, \widehat{f}_n({\xi}).
\end{split}
\end{equation}
Finally, by 
the Plancherel-Parseval's relation we have that
\begin{equation}
\label{eq:tronconeuse}
\begin{split}
\|n^{a-b -3/2} \,  &\mc D_nw_n\|^2_{2,n}\\\leq&C  n^{2(a-b)+1} \int_{-1/2}^{1/2} |\sin (\pi y)|^2 | I_n (y)|^2| K_n (y)|^2  | J_n (y)|^2 |\widehat f_n (ny)|^2 dy\\
+&C n^{2(a-b)+1} \int_{-1/2}^{1/2} |\sin \big (\pi y\big )|^4 \big|\widetilde{I_n} (y) \big|^2| K_n (y)|^2  \big| J_n (y) \big|^2 \big| {\widehat f}_n (ny) \big|^2   dy\\
+&C n^{2(a-b)+1} \int_{-1/2}^{1/2} |\sin \big (\pi y\big )|^4 \big|{I_n} (y) \big|^2| \widetilde K_n (y)|^2  \big| J_n (y) \big|^2 \big| {\widehat f}_n (ny) \big|^2   dy\\
+&C n^{2(a-b)+1} \int_{-1/2}^{1/2} |\sin \big (\pi y\big )|^6 \big|\widetilde{I_n} (y) \big|^2| \widetilde K_n (y)|^2  \big| J_n (y) \big|^2 \big| {\widehat f}_n (ny) \big|^2   dy.
\end{split}
\end{equation}
By using Lemma \ref{lem:j_nint}, Lemma \ref{lem:in18}, Lemma \ref{lem:K_nint} and Lemma \ref{lem:sfp}, choosing a $p$ sufficiently large, we can bound the first term in on the RHS of (\ref{eq:tronconeuse}) by a constant times
\begin{equation*}
\label{eq:tournedos_new}
\begin{split}
 n^{2a -4b +1} \int_0^{1/2} \tfrac{y^5}{1+ (ny)^p} \tfrac{1}{(|y| +n^{-b})^3} \, dy = n^{2a -2 -4b} \int_{0}^{\infty} \tfrac{z^5}{1+z^p} \tfrac{1}{(z + n^{1-b})^3}\, dz. 
\end{split}
\end{equation*}
The previous integral is bounded from above by 
$$n^{2a -5 -b} \int_{0}^{\infty} \tfrac{z^5}{1+z^p}\, dz$$
which goes to $0$ since $a=\inf (3/2 -3/2b, 2) \le 2$ and $b \ge 0$.
%
%
The remaining terms at the right hand side of (\ref{eq:tronconeuse}) are proved to go to $0$ similarly by using Lemma \ref{lem:j_nint}, Lemma \ref{lem:in18} and Lemma \ref{lem:sfp}. This completes the proof of \eqref{est der wn}.\\

$\bullet$ Finally we prove \eqref{est tilde d wn}. 
We start by computing the  Fourier transform of $ {\widetilde{\mc E}_n w}_n$ and ${\widetilde{\mc F}_n w}_n $.  As
in \eqref{exp_imp_1} we have that
\begin{equation}\label{exp_imp_7}
\begin{split}
{\widehat {\widetilde{\mc E}_n w}}_n (\xi)  &=  \int_{-n/2}^{n/2}  {\widehat {w}}_n (\xi-\ell,\ell)\big\{ e^{- \tfrac{2i\pi \ell}{n}} -1 \big\} \, d\ell.
\end{split}
\end{equation}
By \eqref{FT of wn} last expression equals to
\begin{equation}\label{ft_of_E_n_w}
\begin{split}
&{\widehat {\widetilde{\mc E}_n w}}_n (\xi)  = - \cfrac{1}{n} \,\int_{-n/2}^{n/2}  \cfrac{e^{ \tfrac{2i \pi (\xi-\ell)}{n}} +e^{\tfrac{2i \pi \ell}{n}} }{  \Lambda\big( \tfrac{\xi-\ell}{n}, \tfrac{\ell}{n}\big) -i\gamma n^{-b}\, \Omega\big( \tfrac{\xi-\ell}{n}, \tfrac{\ell}{n}\big) } \; {\widehat {\widetilde{\mc E}_n v}}_n (\xi)\big\{ e^{- \tfrac{2i\pi \ell}{n}} -1 \big\} \, d\ell\\
&\quad \quad+ \Big(\tfrac{1-\kappa}{2}\Big) \, \cfrac{1}{n} \,\int_{-n/2}^{n/2}  \cfrac{e^{ \tfrac{2i \pi (\xi-\ell)}{n}} +e^{\tfrac{2i \pi \ell}{n}} }{  \Lambda\big( \tfrac{\xi-\ell}{n}, \tfrac{\ell}{n}\big) -i\gamma n^{-b}\, \Omega\big( \tfrac{\xi-\ell}{n}, \tfrac{\ell}{n}\big) }\;  {\widehat {\widetilde{\mc F}_n v}}_n (\xi)\big\{ e^{- \tfrac{2i\pi \ell}{n}} -1 \big\} \, d\ell\\
&=-\sqrt n \,  (K_n(\tfrac{\xi}{n}))^2\Big\{ I_n(\tfrac{\xi}{n})-\Big(\tfrac{1-\kappa}{2}\Big)\big(1-e^{-\frac{2\pi i \xi}{n}}\big){\widetilde I_n} (\tfrac{\xi}{n})\Big\} \; \widehat{f}_n(\xi)\\
&+\Big(\tfrac{1-\kappa}{2}\Big)\sqrt n \,  K_n(\tfrac{\xi}{n}))\widetilde K_n(\tfrac{\xi}{n})) (e^{-\tfrac{2\pi i \xi}{n}}-1)\Big\{ I_n(\tfrac{\xi}{n})-\Big(\tfrac{1-\kappa}{2}\Big)\big(1-e^{-\frac{2\pi i \xi}{n}}\big){\widetilde I_n} (\tfrac{\xi}{n})\Big\} \; \widehat{f}_n(\xi).
\end{split}
\end{equation}
Now, as in  \eqref{exp_imp_2} we have that
\begin{equation}\label{exp_imp_8}
\begin{split}
{\widehat {\widetilde{\mc F}_n w}}_n (\xi)  &=  \big( e^{ -\tfrac{2i\pi \xi }{n}}-1\big)
 \int_{-n/2}^{n/2}  {\widehat {w}}_n (\xi-\ell,\ell)\, d\ell.
\end{split}
\end{equation}
From \eqref{FT of wn} last expression is equal to 
\begin{equation}
\label{ft_of_F_n_w}
\begin{split}
&{\widehat {\widetilde{\mc F}_n w}}_n (\xi)  =    -\cfrac{1}{n} \, \Big( e^{ -\tfrac{2i\pi \xi }{n}}-1\Big)\int_{-n/2}^{n/2} \, \cfrac{e^{ \tfrac{2i \pi (\xi-\ell)}{n}} +e^{\tfrac{2i \pi \ell}{n}} }{  \Lambda\big( \tfrac{\xi-\ell}{n}, \tfrac{\ell}{n}\big) -i\gamma n^{-b}\, \Omega\big( \tfrac{\xi-\ell}{n}, \tfrac{\ell}{n}\big) }\, {\widehat {\widetilde{\mc E}_n v}}_n (\xi)  \, d\ell\\
&\quad \quad+ \cfrac{1}{n}\Big(\tfrac{1-\kappa}{2}\Big) \Big( e^{ -\tfrac{2i\pi \xi }{n}}-1\Big)\int_{-n/2}^{n/2}  \, \cfrac{e^{ \tfrac{2i \pi (\xi-\ell)}{n}} +e^{\tfrac{2i \pi \ell}{n}} }{  \Lambda\big( \tfrac{\xi-\ell}{n}, \tfrac{\ell}{n}\big) -i\gamma n^{-b}\, \Omega\big( \tfrac{\xi-\ell}{n}, \tfrac{\ell}{n}\big) } \, {\widehat {\widetilde{\mc F}_n v}}_n (\xi) d\ell\\
&= -\sqrt n \; \big( e^{ -\tfrac{2i\pi \xi }{n}}-1\big)\; {K}_n (\tfrac{\xi}{n}\big) \tilde{K}_n (\tfrac{\xi}{n})\Big\{I_n(\tfrac{\xi}{n})-\Big(\tfrac{1-k}{2}\Big)\Big(1-e^{-\frac{2\pi i \xi}{n}}\Big){\widetilde I_n} (\tfrac{\xi}{n})\Big\} \, \widehat{f}_n(\xi)\\
&+\sqrt n \Big(\tfrac{1-\kappa}{2}\Big) \; \big( e^{ -\tfrac{2i\pi \xi }{n}}-1\big)^2\;(\tilde{K}_n (\tfrac{\xi}{n}))^2\; \Big\{I_n(\tfrac{\xi}{n})-\Big(\tfrac{1-k}{2}\Big)\Big(1-e^{-\frac{2\pi i \xi}{n}}\Big){\widetilde I_n} (\tfrac{\xi}{n})\Big\} \, \widehat{f}_n(\xi).
\end{split}
\end{equation}
Now, we note that, in order to prove that
\begin{equation*}
 \|n^{a-2} \,  \tilde{\mc D_n}w_n\|^2_{2,n}
\end{equation*}
vanishes as $n\to\infty$, it is enough to show that
\begin{equation*}
 \|n^{a-1/2} \,  {\widetilde{\mc E}_n w}_n\|^2_{2,n}\quad \textrm{and}\quad 
 \|n^{a-1/2} \, {\widetilde{\mc F}_n w}_n\|^2_{2,n}
\end{equation*}
vanish as $n\to\infty$. We start with the term on the left hand side of last expression. 
From Plancherel-Parseval's relation, \eqref{ft_of_E_n_w} and the inequality $(x_1+\ldots x_k)^2 \le k [x_1^2 +\ldots +x_k^2]$, we see that we have to estimate four terms which are all of same order. One of them is 
\begin{equation*}
n^{2a} \int_{-n/2}^{n/2}   \big| K_n \big( \tfrac{\xi}{n} \big) \big|^4|I_n(\tfrac{\xi}{n})|^2 \, \big| {\widehat f}_n (\xi) \big|^2 \, d\xi.
\end{equation*}
By a change of variables last term is equal to 
\begin{equation*}
n^{2a+1} \int_{-1/2}^{1/2} \big| {\widehat f}_n (ny) \big|^2  \big| K_n \big(y \big) \big|^4|I_n(y)|^2 \, dy.
\end{equation*}
By using Lemma \ref{lem:in18}, Lemma \ref{lem:K_nint} and Lemma \ref{lem:sfp} and doing again a change of variables we bound the last term from above by
\begin{equation}
\begin{split}
 & C \; n^{2a-2b-2} \;  \int_0^{\infty} \tfrac{z^5}{1+z^p} \, \tfrac{1}{(z + n^{1-b})^3} \, dz \\
 & \le n^{b -1} \; \int_{0}^{\infty} \tfrac{z^5}{1+ z^p} \, dz
 \end{split}
\end{equation}
since $a \le 2$. This integral goes to $0$ as $n$ goes to infinity as long as $b<1$.
For the remaining integrals a similar computation can be done and the proof follows.  

\end{proof}

\section{Asymptotics of few integrals}

In this section, we compute or estimate several integrals. Some quantities are going to appear many times, therefore for the sake of clarity we introduce some notations. For any $y \in \big[-\frac{1}{2}, \frac{1}{2}\big]$ we denote by $w:=w(y)$ the complex number $w=e^{2i\pi y}$. We denote by $\mc C$ the unit circle positively oriented, and $z:=e^{2i\pi x}$ is the dummy variable  used in the complex integrals. With these notations we have
\begin{equation}
\begin{split}
&\Lambda ( y - x, x ) = 4 - z ( w^{-1} + 1 ) - z^{- 1} ( w + 1 ), \\ 
&i \Omega( y - x, x ) = z ( 1 - w^{ - 1} ) + z^{ - 1} ( w - 1 ).
\end{split}
\end{equation}
Hereafter, for any complex number $z$, we denote by $\sqrt z$ its principal square root, with positive real part. Precisely, if $z=re^{i\varphi}$, with $r \ge 0$ and $\varphi \in (-\pi,\pi]$, then the principal square root of $z$ is $\sqrt z=\sqrt r e^{i\varphi/2}.$
We introduce the degree two complex polynomial:
\begin{equation} 
P_w(z):=z^2-\frac{4}{(1+\bar w)+\gamma n^{-b} (1-\bar w)} z + w=(z-z_-)(z-z_+), 
\label{eq:pz}
\end{equation}
where $|z_-|<1$ and $|z_+|>1$. The important identities are \[
z_-z_+ =w, \qquad
z_-+z_+=\frac{4}{(1+\bar w)+\gamma n^{-b} (1-\bar w)}.\]
Finally, we denote 
\begin{align*}
a_n(w):&= (1+\bar w)+\gamma n^{-b} (1-\bar w)\\
\delta_n(w)  :&= 4- w \big[ (1+ {\bar w}) + \gamma n^{-b} (1-{\bar w}) \big]^2,
\end{align*}
so that the discriminant of $P_w$ is $4 \delta_n(w) / a_n^2 (w)$ and
\[
z_+ = \frac{2 +\sqrt{\delta_n(w)}}{a_n(w)},\qquad  z_-=\frac{2-\sqrt{\delta_n (w)}}{a_n(w)}.
\]

\begin{lemma}
\label{lem:G0345}
We have that
\begin{equation}
\begin{split}
&G_n (y) = \cfrac{1}{n^{b/2}}\;  {\sqrt{\cfrac{\gamma}{2}}}\;  |\sin(\pi y)|^{3/2} \; e^{i {\rm{sgn}} (y) \tfrac{\pi}{4}} + {\mc O} ( \sin^2 (\pi y) ) \quad \text{if} \quad b<1, \\
& G_n (y) = {\mc O} \Big( n^{-b} |\sin (\pi y)|\Big)\quad \text{if} \quad b\ge1.
\end{split}
\end{equation}
\end{lemma}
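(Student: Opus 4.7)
The plan is to compute $G_n(y)$ explicitly by rewriting it as a complex contour integral around the unit circle $\mathcal C$, applying the residue theorem, and then analyzing the resulting closed-form expression. Setting $z = e^{2i\pi x}$ (so $dx = dz/(2i\pi z)$), the identities recorded at the start of this section give
\[
\gamma^{-1} n^b \Lambda(y-x,x) - i\Omega(y-x,x) \;=\; -\,\frac{n^b\, a_n(w)}{\gamma\, z}\, P_w(z),
\]
with $P_w$ as in~\eqref{eq:pz}, and $z\cdot i\Omega(y-x,x) = z^2(1-w^{-1}) + (w-1)$. Substituting into the definition~\eqref{eq:fF} yields
\[
G_n(y) \;=\; -\,\frac{\gamma}{4 n^b\, a_n(w)} \oint_{\mathcal C} \frac{\bigl(z^2(1-w^{-1}) + (w-1)\bigr)^2}{z^2\, P_w(z)}\,\frac{dz}{2i\pi}.
\]

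The integrand is meromorphic with a double pole at $z=0$ and simple poles at $z_\pm$; only $z_-$ lies inside $\mathcal C$. Using $P_w(0) = w$, $P_w'(0) = -4/a_n(w)$, and the fact that the numerator at the origin equals $(w-1)^2$ while its first derivative vanishes, one obtains $\mathrm{Res}_{z=0} = -4(w-1)^2/(w^2 a_n(w))$. For the residue at $z_-$, the crucial Vieta identity $z_-z_+ = w$ gives $z_-^2 + w = z_-(z_-+z_+) = 4z_-/a_n(w)$, hence $z_-^2(1-w^{-1}) + (w-1) = 4z_-(w-1)/(w\,a_n(w))$; combined with $z_- - z_+ = -2\sqrt{\delta_n(w)}/a_n(w)$, $(w-1)^2 = -4w\sin^2(\pi y)$, and the algebraic identity $wa_n(w)^2 = 4 - \delta_n(w) = (2-\sqrt{\delta_n(w)})(2+\sqrt{\delta_n(w)})$, the two residues add and collapse to the clean form
\[
G_n(y) \;=\; \frac{4\gamma\,\sin^2(\pi y)}{n^b\,\sqrt{\delta_n(w)}\,\bigl(2 + \sqrt{\delta_n(w)}\bigr)}.
\]

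The asymptotic analysis then rests on the explicit expansion
\[
\delta_n(w) \;=\; 4\sin(\pi y)\bigl[(1+c^2)\sin(\pi y) - 2ic\cos(\pi y)\bigr], \qquad c = \gamma n^{-b},
\]
from which the principal square root $\sqrt{\delta_n(w)}$ has positive real part and imaginary part of sign $-\mathrm{sgn}(y)$. For $b<1$ and $|\sin(\pi y)| \lesssim c$ (the only regime where the target leading term is not absorbed in the error), the imaginary part of $\delta_n(w)$ dominates, so $\sqrt{\delta_n(w)} \approx 2\sqrt{2c\,|\sin(\pi y)|\,\cos(\pi y)}\,e^{-i\,\mathrm{sgn}(y)\pi/4}$ and $2+\sqrt{\delta_n(w)} \approx 2$, producing the leading contribution $\sqrt{\gamma/2}\,n^{-b/2}\,|\sin(\pi y)|^{3/2}\,e^{i\,\mathrm{sgn}(y)\pi/4}$ (using $\sqrt{2c} = \sqrt{2\gamma}\,n^{-b/2}$ and $\cos(\pi y) = 1+\mathcal O(\sin^2(\pi y))$). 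In the complementary regime $|\sin(\pi y)| \gg c$ one has $|\sqrt{\delta_n(w)}| \gtrsim 2|\sin(\pi y)|$, hence $|G_n(y)| \lesssim \sin^2(\pi y)/n^b \le \sin^2(\pi y)$, and the claimed leading term is itself $\mathcal O(\sin^2(\pi y))$ there, so the difference is absorbed into the error. For $b \ge 1$, $c \le \gamma/n$ and the lower bound $|\sqrt{\delta_n(w)}| \gtrsim \sqrt{|\sin(\pi y)|(|\sin(\pi y)|+c)}$ (which follows by separately estimating the real and imaginary parts of $\delta_n(w)$) gives $|G_n(y)| \lesssim n^{-b}|\sin(\pi y)|^{3/2}/\sqrt{|\sin(\pi y)|+c} \lesssim n^{-b}|\sin(\pi y)|$.

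The main obstacle is matching the expansions uniformly in $y \in [-1/2,1/2]$ through the crossover $|\sin(\pi y)| \sim c$, with error bounds independent of $n$. The phase $e^{i\,\mathrm{sgn}(y)\pi/4}$ emerges from a careful branch analysis: since $\mathrm{Im}\,\delta_n(w) = -8c\sin(\pi y)\cos(\pi y)$, the argument of $\delta_n(w)$ lies in $(-\pi/2,0)$ for $y>0$ and in $(0,\pi/2)$ for $y<0$, placing the principal square root in $(-\pi/4,0)$ and $(0,\pi/4)$ respectively.
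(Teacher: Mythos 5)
Your contour-integral setup is the same as the paper's: after the change of variables $z=e^{2i\pi x}$ you factor the denominator of $G_n$ as $-\frac{n^b a_n(w)}{\gamma z}P_w(z)$, apply the residue theorem at $z=0$ and $z=z_-$, and recover the paper's intermediate closed form $G_n(y)=\gamma n^{-b}(1-\bar w)^2 a_n(w)^{-2}\big[1-2/\sqrt{\delta_n(w)}\big]$. (Your prefactor carries an extra sign and your stated value of $\mathrm{Res}_{z=0}$ also carries a sign error — it should be $+4(w-1)^2/(w^2 a_n(w))$, since the derivative of $N/P_w$ at $0$ is $-N(0)P_w'(0)/P_w(0)^2$ with $P_w'(0)=-4/a_n(w)$ — but the two cancel and your final form matches.) Your further algebraic simplification
\[
G_n(y)=\frac{4\gamma \sin^2(\pi y)}{n^b\,\sqrt{\delta_n(w)}\,\bigl(2+\sqrt{\delta_n(w)}\bigr)}
\]
is correct and arguably cleaner than the factorization the paper works with. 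This all agrees with the paper's strategy.

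Where the proposal falls short is precisely where the real work lies: establishing, \emph{uniformly in $y$}, that the difference between $G_n(y)$ and $\sqrt{\gamma/2}\,n^{-b/2}|\sin(\pi y)|^{3/2}e^{i\,\mathrm{sgn}(y)\pi/4}$ is $O(\sin^2(\pi y))$ with a constant independent of $n$. You identify the crossover $|\sin(\pi y)|\sim c=\gamma n^{-b}$ as ``the main obstacle'' and correctly describe which term dominates in $\delta_n(w)$ on either side, but you never resolve the obstacle: in the transition regime the real and imaginary parts of $\delta_n(w)$ are comparable, so $\sqrt{\delta_n(w)}$ differs from your approximant $2\sqrt{2c\,|\sin(\pi y)|\cos(\pi y)}\,e^{-i\,\mathrm{sgn}(y)\pi/4}$ by a multiplicative factor that is bounded but not close to $1$, and you do not quantify the resulting error nor verify that it is $O(\sin^2(\pi y))$. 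The paper handles exactly this by isolating a bounded factor $g(u)=(1+\sqrt{1-u})^{-1}$, computing $|1-\tfrac{w}{4}a_n^2(w)|$ explicitly, and then proving $\varepsilon_n(y)=O(\sin^2(\pi y))$ via a two-case argument ($|\sin(\pi y)|\ge n^{-b}$ vs.\ $<n^{-b}$) and separately absorbing the phase error $|e^{i\varphi_n(y)}-e^{i\,\mathrm{sgn}(y)\pi/4}|=O(n^{b/2}|\sin(\pi y)|^{1/2}\wedge 1)$. Your sketch omits all of this. Also, the intermediate estimate in the regime $|\sin(\pi y)|\gg c$ should be $|G_n(y)|\lesssim |\sin(\pi y)|/n^b$ (not $\sin^2(\pi y)/n^b$); the conclusion $|G_n(y)|\lesssim \sin^2(\pi y)$ then requires the observation that $n^{-b}\lesssim |\sin(\pi y)|$ in that regime, which you use implicitly but should state. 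The remaining steps (the explicit formula for $\delta_n(w)$, the branch-of-square-root discussion, and the $b\ge 1$ bound) are correct.
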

\begin{proof}
We compute the function $G_n$ by using the residue theorem. We have that
\begin{equation}
G_n (y) = \cfrac{(1-{\bar w})^2}{8i\pi} \cfrac{\gamma n^{-b}}{a_n (w)} \oint_{\mc C} f_{w} (z) dz
\end{equation}
with
\begin{equation}
 f_{w} (z) =\cfrac{(z^2+w)^2 }{z^2 P_w (z) } =\cfrac{(z^2+w)^2}{z^2 (z-z_-) (z-z_+)}.
\end{equation}
Therefore we get that
\begin{equation}
\begin{split}
G_n (y) &= \cfrac{(1-{\bar w})^2}{4} \cfrac{\gamma n^{-b}}{a_n (w)} \Big[ {\rm{Res}} (f_w,0) + {\rm{Res}} (f_{w},z_-) \Big]\\
&=\cfrac{(1-{\bar w})^2}{4} \cfrac{\gamma n^{-b}}{a_n (w)} \Big[ z_-+z_+ + \cfrac{(z_{-}^2 +w)^2}{z_{-}^2 (z_- -z_+)} \Big]\\
&=\cfrac{(1-{\bar w})^2}{4} \cfrac{\gamma n^{-b}}{a_n (w)} \Big[ \cfrac{4}{a_n (w)} -\cfrac{8}{a_n(w) \sqrt{\delta_n (w)}}  \Big]\\
&=\gamma n^{-b} \cfrac{(1-{\bar w})^2}{a_n^2 (w)} \Big[ 1-\cfrac{2}{\sqrt{\delta_n (w)}} \Big]\\
&=\cfrac{\gamma}{4n^b} \cfrac{(1- {\bar w})^2 w}{\tfrac{w}{4} a_n^2 (w)}\, \left[ 1-\cfrac{1}{\sqrt{1-\tfrac{w}{4} a_n^2 (w)}} \right]\\
&=\cfrac{\gamma }{4n^b} (1- {\bar w})^2 w g\big( \tfrac{w}{4} a_n^2 (w)\big) - \cfrac{\gamma}{4n^b} \cfrac{ (1- {\bar w})^2 w }{\sqrt{1-\tfrac{w}{4} a_n^2 (w)}}
\end{split}
\end{equation}
where $g: u \in \CC \backslash\{1\} \to u^{-1} (1- (1-u)^{-1/2})+ (1-u)^{-1/2}$. We have that $g(u)=\tfrac{1}{1+ \sqrt{1-u}}$. Since $\sqrt{1-u}$ has a positive real part, we deduce that the function $g$ is uniformly bounded. Therefore, there exists a universal constant $C>0$ such that
\begin{equation*}
\left| G_n (y) + \cfrac{\gamma}{4n^b} \cfrac{ (1- {\bar w})^2 w }{\sqrt{1-\tfrac{w}{4} a_n^2 (w)}}\right| \le Cn^{-b} \sin^2 (\pi y).
\end{equation*}
Let us now observe that  
\begin{equation}
\label{eq:tournevis1}
1-\cfrac{w}{4} a_n^2 (w) = \Big( 1+\cfrac{\gamma^2}{n^{2b}} \Big)\sin^2 (\pi y) - i\cfrac{\gamma}{n^{b}} \sin (2\pi y)
\end{equation}
and that
\begin{equation}
{\rm{Arg}} \left( 1-\cfrac{w}{4} a_n^2 (w) \right) =-{\rm{sgn}} (y) \cfrac{\pi}{2} + \arctan \left( \cfrac{ \gamma^{-1} n^b (1+ \gamma^2 n^{-2b})}{2}\tan (\pi y)\right).
\end{equation}
Since $\cos^2(\pi y)= 1- \sin^2(\pi y)$, we have that
\begin{equation}
\label{eq:tournevis2}
\begin{split}
\left| 1-\cfrac{w}{4} a_n^2 (w) \right|^2 & = \sin^2 (\pi y) \left[ (1+\gamma^2n^{-2b})^2 \sin^2 (\pi y) +4 \gamma^2 n^{-2b} \cos^2 (\pi y)  \right]\\
&=\sin^2 (\pi y) \; \left\{ \Big( 1- \tfrac{\gamma^2}{n^{2b}} \Big)\textcolor{black}{^2} \sin^2 (\pi y) + \tfrac{4 \gamma^2}{n^{2b}} \right\}.
\end{split}
\end{equation}
It follows that
\begin{equation}
\label{eq:regia}
\begin{split}
G_n (y) &= \cfrac{\gamma}{n^b} \cfrac {|\sin (\pi y)|^{3/2}}{\Big[ (1+\gamma^2 n^{-2b})^2 \sin^2 (\pi y) + 4 \gamma^2 n^{-2b} \cos^2 (\pi y) \Big]^{1/4}} \; e^{i\varphi_n (y)} \\
&+ {\mc O} (  n^{-b} \sin^2 (\pi y) )
\end{split}
\end{equation}
with 
\begin{equation}
\varphi_n (y) = {\rm{sgn}} (y) \cfrac{\pi}{4} - \cfrac{1}{2} \arctan \Big( \frac{\gamma^{-1} n^b (1+\gamma^2 n^{-2b})}{2} \tan(\pi y) \Big).
\end{equation}
Assume first that $b<1$. Then, by (\ref{eq:tournevis2}), we have that
\begin{equation}
\begin{split}
 & \cfrac{\gamma}{n^b} \cfrac {|\sin (\pi y)|^{3/2}}{\Big[ (1+\gamma^2 n^{-2b})^2 \sin^2 (\pi y) + 4 \gamma^2 n^{-2b} \cos^2 (\pi y) \Big]^{1/4}} \\
 &= \cfrac{\gamma}{n^{b/2} \sqrt{2\gamma}} \cfrac{|\sin (\pi y)|^{3/2}}{\Big[ 1+ \tfrac{n^{2b}}{4\gamma^2} (1-\gamma^2 n^{-2b})^2 \, \sin^2 (\pi y)  \Big]^{1/4}}\\
 &=\cfrac{\sqrt \gamma}{n^{b/2} \sqrt{2}} |\sin(\pi y)|^{3/2} + {\varepsilon_n} (y).  
 \end{split}
\end{equation}
We claim that $\varepsilon_n (y)= {\mc O} \left( \sin^2 (\pi y) \right)$. To prove it we distinguish two cases
\begin{itemize}
\item $|\sin(\pi y)| \ge n^{-b}$ then since $|(1+t)^{-1/4} -1| \le 2$ for $t>0$, we have
$$|\varepsilon_n (y)| \le \cfrac{\sqrt 2 \gamma |\sin(\pi y)|^{3/2}}{n^{b/2} } \le C \sin^2 (\pi y).$$  
 \item $|\sin(\pi y)| \le n^{-b}$ then since $|(1+t)^{-1/4} -1| \le t$ for $t>0$, we have
 \begin{equation*}
 \begin{split}
 |\varepsilon_n (y)| &\le C\cfrac{|\sin(\pi y)|^{3/2}}{n^{b/2}} \; n^{2b} \sin^2 (\pi y)\\
 &=C n^{3b/2}|\sin(\pi y)|^{3/2} \; \sin^2 (\pi y) \le C \sin^2 (\pi y)
 \end{split}
 \end{equation*}
\end{itemize}
and the claim is proved. Therefore, we have that for any $y\in [-1/2, 1/2]$, 
\begin{equation}
\begin{split}
G_n (y)= \cfrac{\sqrt \gamma}{n^{b/2} \sqrt{2}} |\sin(\pi y)|^{3/2} \; e^{i \varphi_n (y)} + {\mc O} ( \sin^2 (\pi y) ).
\end{split}
\end{equation}
Observe also that 
\begin{equation}
\begin{split}
\big|e^{i \varphi_n (y)} -  e^{i {\rm{sgn}} (y) \tfrac{\pi}{4}} \big| & \le   \Big| \exp\big\{-\tfrac{i}{2} \arctan \Big(\frac{\gamma^{-1} n^b (1+\gamma^2 n^{-2b})}{2}\Big) \tan(\pi y) )\big\}-1 \Big|\\
&\le \tfrac{1}{2} \Big| \arctan (\gamma^{-1} n^b (1+\gamma^2 n^{-2b}) \tan(\pi y) ) \Big| \\
&\le {\mc O} \Big (\tfrac{\pi}{2} \wedge n^b |\tan(\pi y)|\Big).
\end{split}
\end{equation}

It follows that
\begin{equation}
\begin{split}
G_n (y)& = \cfrac{\sqrt \gamma}{n^{b/2} \sqrt{2}} |\sin(\pi y)|^{3/2} \; e^{i {\rm{sgn}} (y) \tfrac{\pi}{4}}\\
 &+ {\mc O} ( \sin^2 (\pi y) ) + {\mc O} \Big (n^{-b/2} |\sin(\pi y)|^{3/2} \wedge n^{b/2} |\sin(\pi y)|^{5/2} \Big).
\end{split}
\end{equation}
By considering the cases $|\sin (\pi y)| \le n^{-b}$ and $|\sin(\pi y)| > n^{-b}$ we see that 
$$ {\mc O} \Big (n^{-b/2} |\sin(\pi y)|^{3/2} \wedge n^{b/2} |\sin(\pi y)|^{5/2} \Big)= {\mc O} (\sin^2 (\pi y))$$
and this proves the first claim.

For the second item, assume that $b\ge1$ and start with the expression (\ref{eq:regia}) and we observe that for some constant $C>0$,
\begin{equation}
\begin{split}
 & \cfrac{\gamma}{n^b} \cfrac {|\sin (\pi y)|^{3/2}}{\Big[ (1+\gamma^2 n^{-2b})^2 \sin^2 (\pi y) + 4 \gamma^2 n^{-2b} \cos^2 (\pi y) \Big]^{1/4}} \\
 & \le \cfrac{C}{n^b} \cfrac{|\sin(\pi y)|^{3/2}}{|\sin (\pi y)|^{1/2}} = \cfrac{C}{n^b} |\sin(\pi y)|.
 \end{split}
\end{equation}

\end{proof}

\begin{lemma}
\label{lem:in18}
Let $b\ge 0$. We have that for any $y \in [-1/2, 1/2]$
$$| I_n (y) | \le C \;  n^{-b} \; \cfrac{|\sin (\pi y)|^{3/2}}{\sqrt{n^{-b} + |\sin (\pi y)|}} $$
and 
$$| {\widetilde I}_n (y) | \le C \; n^{-b} \; \cfrac{|\sin (\pi y)|^{1/2}}{\sqrt{n^{-b} + |\sin (\pi y)|}}.$$

\end{lemma}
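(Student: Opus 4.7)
The plan is to evaluate both $I_n(y)$ and $\widetilde I_n(y)$ by residue calculus, in parallel with the computation of $G_n(y)$ carried out in Lemma~\ref{lem:G0345}. After the change of variable $z=e^{2i\pi x}$, both integrals become contour integrals over the unit circle $\mc C$, and the factorizations
\begin{equation*}
i\,\Omega(y-x,x)=\frac{(1-\bar w)(z^2+w)}{z},\qquad \gamma^{-1}n^{b}\Lambda(y-x,x)-i\,\Omega(y-x,x)=-\frac{\gamma^{-1}n^{b}a_n(w)}{z}\,P_w(z)
\end{equation*}
show that the integrand of $\widetilde I_n$ has simple poles at $0$ and $z_-$ inside $\mc C$, while the integrand of $I_n$, carrying the additional factor $(1-z^{-1})$, has a double pole at $0$ and a simple pole at $z_-$.

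For $\widetilde I_n$ the residue calculation is direct. Using $z_-^2+w=z_-(z_-+z_+)=4z_-/a_n(w)$ and $z_--z_+=-2\sqrt{\delta_n(w)}/a_n(w)$, one obtains
\begin{equation*}
\widetilde I_n(y)=-\frac{(1-\bar w)\gamma n^{-b}}{a_n(w)}\left(1-\frac{2}{\sqrt{\delta_n(w)}}\right)=-\frac{a_n(w)}{1-\bar w}\,G_n(y),
\end{equation*}
where the last equality uses the explicit formula $G_n(y)=\gamma n^{-b}(1-\bar w)^2 a_n(w)^{-2}\bigl(1-2/\sqrt{\delta_n(w)}\bigr)$ derived in the proof of Lemma~\ref{lem:G0345}. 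For $I_n$, the residue at the double pole $z=0$ requires differentiating $(z-1)(z^2+w)/[(z-z_-)(z-z_+)]$ at $z=0$; it equals $1-4\bar w/a_n(w)$. Combining with the residue at $z_-$ and simplifying via the identities $wa_n(w)=(w+1)+\gamma n^{-b}(w-1)$, $a_n(w)-2\bar w=(1-\bar w)(1+\gamma n^{-b})$ and $1/z_-=\bar w z_+$, the two residues collapse into the same factor $(1-2/\sqrt{\delta_n(w)})$ that appears in $G_n$, producing the clean identity
\begin{equation*}
I_n(y)=-(1+\gamma n^{-b})\,G_n(y).
\end{equation*}

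It then suffices to establish the unconditional estimate
\begin{equation*}
|G_n(y)|\le C\,\frac{n^{-b}\,|\sin(\pi y)|^{3/2}}{\sqrt{n^{-b}+|\sin(\pi y)|}},
\end{equation*}
valid for every $y\in[-1/2,1/2]$ and every $b\ge 0$. This follows from the proof of Lemma~\ref{lem:G0345}: the main term of $G_n$ is $-\tfrac{\gamma}{4n^{b}}(1-\bar w)^2 w/\sqrt{1-wa_n(w)^2/4}$, and the identity $|1-wa_n(w)^2/4|^2=\sin^2(\pi y)\bigl[(1-\gamma^2 n^{-2b})^2\sin^2(\pi y)+4\gamma^2 n^{-2b}\bigr]$ combined with the elementary equivalence $(1-\gamma^2 n^{-2b})^2\sin^2(\pi y)+4\gamma^2 n^{-2b}\asymp \sin^2(\pi y)+n^{-2b}$ (uniform for $n$ large, with constants depending only on $\gamma$) yields precisely the desired bound; the $O(n^{-b}\sin^2(\pi y))$ remainder from Lemma~\ref{lem:G0345} is absorbed since $|\sin(\pi y)|\le 1$ on $[-1/2,1/2]$. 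Inserting this into the two identities above and using the elementary bounds $1+\gamma n^{-b}\le 1+\gamma$, $|a_n(w)|\le 2\sqrt{1+\gamma^2}$ and $|1-\bar w|=2|\sin(\pi y)|$ produces the two claimed estimates.

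The main obstacle is the algebraic simplification of the $I_n$ residue sum: a priori the residues at $z=0$ and at $z=z_-$ have quite different shapes, and their collapse into a scalar multiple of $G_n$ hinges on the precise identities linking $a_n(w)$, $P_w$ and $\delta_n(w)$ recalled above. Once this identity is in hand, Lemma~\ref{lem:G0345} immediately supplies the required bounds for both $I_n$ and $\widetilde I_n$, finishing the proof.
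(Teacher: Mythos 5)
Your proof is correct, and it takes a genuinely cleaner route than the paper's. The paper also evaluates $I_n$ and $\widetilde I_n$ by residues, but it packages the residue sum as $I_n(y)=-\tfrac{\gamma(1-\bar w)}{2n^b}\,\mc K_w\!\big(\tfrac{a_n(w)}{2}\big)$ (and similarly with $\widetilde{\mc K}_w$ for $\widetilde I_n$), where $\mc K_w(u)$ is a fairly complicated explicit function, and then proves the required bound by a three-regime case analysis on $w$ (near $1$, near $-1$, away from both), leaning on the approximations $\tfrac{a_n(w)}{2}=\tfrac{1+\bar w}{2}+\mc O(|1-w|n^{-b})$ and $|1-w a_n^2(w)/4|\asymp |w-1|\,(|w-1|+n^{-b})$. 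You short-circuit all of that by observing that the two residue sums factor algebraically, yielding the exact identities
\[
I_n(y)=-(1+\gamma n^{-b})\,G_n(y),\qquad \widetilde I_n(y)=-\frac{a_n(w)}{1-\bar w}\,G_n(y),
\]
which I have checked (the crucial step being $\mathrm{Res}_0+\mathrm{Res}_{z_-}=\big(1-\tfrac{2\bar w}{a_n}\big)\big(1-\tfrac{2}{\sqrt{\delta_n}}\big)$ together with $a_n(w)-2\bar w=(1+\gamma n^{-b})(1-\bar w)$). This reduces the lemma to the single uniform bound $|G_n(y)|\le C\,n^{-b}|\sin(\pi y)|^{3/2}/\sqrt{n^{-b}+|\sin(\pi y)|}$, which indeed follows from the intermediate identity \eqref{eq:regia} in the paper's proof of Lemma~\ref{lem:G0345} (the pointwise equivalence $(1-\gamma^2 n^{-2b})^2\sin^2(\pi y)+4\gamma^2 n^{-2b}\asymp \sin^2(\pi y)+n^{-2b}$ holds uniformly in $n\ge 1$ for fixed $\gamma$, and the $\mc O(n^{-b}\sin^2(\pi y))$ remainder is dominated by the main term since $|\sin(\pi y)|,\,n^{-b}\le 1$). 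What your approach buys is a shorter, structurally transparent argument that avoids the delicate case-splitting around $w=\pm 1$; what the paper's approach buys is that it does not rely on the coincidence that $I_n$ and $\widetilde I_n$ are scalar multiples of $G_n$, and so it would generalize more readily to residue sums that do not factor. Two tiny points worth making explicit if you write this up: the identities should be stated as holding pointwise in $y$ (the scalar factors $1+\gamma n^{-b}$ and $a_n(w)/(1-\bar w)$ both depend on $y$ through $w$), and the uniform bound on $G_n$ should be flagged as being for all $b\ge 0$, extracted from \eqref{eq:regia} rather than from the two-case statement of Lemma~\ref{lem:G0345}.
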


\begin{proof}
We have that
\begin{equation}
I_n (y)= -\cfrac{\gamma}{n^b} \cfrac{(1-\bar w)}{a_n (w)}\, \cfrac{1}{2i \pi} \oint_{\mc C} g_w (z)   \, dz
\end{equation}
with
\begin{equation}
\label{eq:gwin}
g_{w} (z)=\cfrac{(z-1) (z^2 +w)}{z^2 P_w(z)}=\cfrac{(z-1) (z^2 +w)}{z^2 (z-z_-)(z-z_+)}.
\end{equation}
It follows that
\begin{equation*}
\begin{split}
I_n (y)&= -\cfrac{\gamma}{n^b} \cfrac{(1-\bar w)}{a_n (w)} \; \left\{ {\rm{Res}}(g_w,0)  +{\rm{Res}}(g_w,z_-)\right\}\\
&=-\cfrac{\gamma}{n^b} \cfrac{(1-\bar w)}{a_n (w)} \left\{ 1- \cfrac{z_- + z_+}{w} +\Big( \cfrac{1}{z_{-}} -1 \Big) \cfrac{z_- + z_+}{z_+ - z_-}  \right\}\\
&= -\cfrac{\gamma}{n^b} \cfrac{1-\bar w}{a_n (w)} \; \left\{1- \cfrac{2}{w\tfrac{a_n (w)}{2}} +\cfrac{1}{\sqrt{1- \tfrac{w}{4}a_n^2 (w)}} \;  \cfrac{\tfrac{a_n (w)}{2} -1 + \sqrt{1- \tfrac{w}{4}a_n^2 (w)}}{1-\sqrt{1- \tfrac{w}{4}a_n^2 (w)} } \right\}\\
&:= -\cfrac{\gamma (1- \bar w) }{2 n^b} {\mc K}_w \Big(\tfrac{ a_{n} (w)}{2}\Big) 
\end{split}
\end{equation*}
with 
\begin{equation}
{\mc K}_w (u) =\cfrac{1}{u} \left\{1- \cfrac{2}{w u} +\cfrac{1}{\sqrt{1- w u^2}} \;  \cfrac{u -1 + \sqrt{1- w u^2}}{1-\sqrt{1- w u^2} } \right\}, \quad |u| <1.
\end{equation}
We observe first that uniformly in $w$ we have by (\ref{eq:tournevis1})
\begin{equation}
\label{eq:marteau1}
\cfrac{a_n (w)}{2} =\cfrac{1+ {\bar w}}{2} + {\mc O} \big( \tfrac{|1-w|}{n^b} \big)
\end{equation}
and by (\ref{eq:tournevis2}) that
\begin{equation}
\label{eq:marteau2}
c \, |w-1| \, \left( |w-1| + n^{-b}\right) \le \Big|1- w \cfrac{a_n^2 (w)}{4} \Big| \le C \,  |w-1| \, \left( |w-1| + n^{-b}\right).
\end{equation}
It follows that if $w$ is not close to $\pm 1$, say $|w \pm 1| \ge \ve>0$, then $ {\mc K}_w \Big(\tfrac{ a_{n} (w)}{2}\Big)$ can be uniformly bounded by a constant $C_\ve$ independently of $n$. 

If $w$ is close to $-1$ then $a_n(w)$ is close to $0$. Performing a Taylor expansion of ${\mc K}_w$ around $u=0$, we obtain that uniformly in $w$,
\begin{equation*}
{\mc K}_w (u) = {\mc O} (1).
\end{equation*} 

It remains thus only to consider the case where $w$ is close to $1$, say $|w-1| \le {\ve}$, which implies that $| a_n (w)/2 -1| \le \ve$ for $n$ sufficiently large. We rewrite, for $|w-1| + |u-1| \le \ve$, 
\begin{equation}
\begin{split}
|{\mc K}_w (u)| &=\left|\cfrac{1}{u} \left\{1- \cfrac{2}{w u} + \cfrac{1}{1-\sqrt{1- w u^2}} +\cfrac{u-1}{\sqrt{1- w u^2}} \;  \cfrac{1}{1-\sqrt{1- w u^2} } \right\} \right|\\
&\le C_{\ve} \left[ 1 + \cfrac{|u-1|}{\sqrt{|1- w u^2|}} \right].
\end{split}
\end{equation}
We use now (\ref{eq:marteau1}) and (\ref{eq:marteau2}) to obtain
\begin{equation}
\Big| {\mc K}_w \Big(\tfrac{ a_{n} (w)}{2}\Big) \Big| \le C_{\ve} \left[ 1+ \cfrac{ \sqrt{|w-1|} }{\sqrt{ |w-1| +n^{-b} }} \right] \le C_{\ve} \cfrac{ \sqrt{|w-1|} }{\sqrt{ |w-1| +n^{-b} }}.
\end{equation} 

The conclusion of the first item follows.

Similarly, we have that
\begin{equation}
{\widetilde{I_n}} (y)= -\cfrac{\gamma}{n^b} \cfrac{1-\bar w}{a_n (w)}\;\cfrac{1}{2i \pi} \oint_{\mc C} h_w (z)   \, dz
\end{equation}
with
\begin{equation}
\label{eq:gwin2}
h_{w} (z)=\cfrac{(z^2 +w)}{z P_w (z) }=\cfrac{(z^2 +w)}{z (z-z_-)(z-z_+)}.
\end{equation}
It follows that
\begin{equation*}
\begin{split}
{\widetilde I}_n (y)&= -\cfrac{\gamma}{n^b} \cfrac{(1-\bar w)}{a_n (w)} \; \left\{ {\rm{Res}}(h_w,0)  +{\rm{Res}}(h_w,z_-) \right\}\\
&= \cfrac{\gamma (1-\bar w) }{2n^b} \, \widetilde{\mc K}_w \Big( \tfrac{a_n (w)}{2}\Big)
\end{split}
\end{equation*}
where
$$\widetilde{\mc K}_w (u)= \cfrac{1}{u} \; \left\{ 1- \cfrac{1}{\sqrt{1- w u^2}}\right\}, \quad |u| <1.$$ 
Let $\ve>0$ small be fixed. If $|w\pm 1| \ge \ve$ then by (\ref{eq:marteau1}) we deduce that for $n$ sufficiently large (uniformly in $w$ in this domain) 
$$\left| \widetilde{\mc K}_w \Big( \tfrac{a_n (w)}{2}\Big) \right| \le C_\ve.$$
If $|w +1| \le \ve$, then for $n$ sufficiently large, uniformly in $w$ in this domain, $|a_n (w)| \le \ve$. And we have that for $|u| \le \ve$, uniformly in $w$, $|{\mc K}_{w} (u)| \le C u$.Therefore we deduce that in this case
 $$\left|\widetilde{\mc K}_w \Big( \tfrac{a_n (w)}{2}\Big) \right| \le C_\ve.$$
If $|w -1| \le \ve$, then for $n$ sufficiently large, uniformly in $w$ in this domain, $|a_n (w) /2 -1| \ge \ve/2$ and therefore, by (\ref{eq:marteau2}) we have
 $$\Big| \widetilde{\mc K}_w \Big( \tfrac{a_n (w)}{2}\Big) \Big| \le  \cfrac{C_{\ve} }{\sqrt{|w-1|^2 + |w-1| n^{-b} }}.$$
The conclusion of the second item follows.
\end{proof}

\begin{lemma}
\label{lem:j_nint}
Let $b\ge 0$. We have that for any $y \in [-1/2, 1/2]$
$$| J_n (y) | \le C \; \cfrac{|\sin (\pi y)|^{-1/2}}{\sqrt{n^{-b} + |\sin (\pi y)|}} $$

\end{lemma}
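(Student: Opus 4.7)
The plan is to evaluate $J_n(y)$ exactly via contour integration, mirroring the computations of $G_n$, $I_n$ and $\widetilde I_n$ performed earlier in this appendix, and then apply the already-established estimate on $|1-wa_n^2(w)/4|$. With $w = e^{2i\pi y}$ and $z = e^{2i\pi x}$, the elementary identity $1+e^{2i\pi(y-2x)} = (z^2+w)/z^2$ together with the factorization $\Lambda(y-x,x) - i\gamma n^{-b}\Omega(y-x,x) = -a_n(w) P_w(z)/z$ used systematically in this section reduces $J_n(y)$ to
$$J_n(y) = -\frac{1}{2i\pi\, a_n(w)}\oint_{\mc C} \frac{z^2+w}{z^2\, P_w(z)}\, dz.$$

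The integrand has two poles inside the unit circle: a double pole at $z=0$ and a simple pole at $z=z_-$. Using $P_w'(0)=-4/a_n(w)$ one gets $\mathrm{Res}(\cdot,0) = 4/(w\,a_n(w))$, and the identity $z_-^2+w = z_-(z_-+z_+)$ (since $z_-z_+=w$) together with $z_++z_-=4/a_n(w)$ and $z_+-z_-=2\sqrt{\delta_n(w)}/a_n(w)$ yields $\mathrm{Res}(\cdot,z_-) = -2a_n(w)/[(2-\sqrt{\delta_n(w)})\sqrt{\delta_n(w)}]$. Summing the two residues and using $w a_n^2(w)=4-\delta_n(w) = (2-\sqrt{\delta_n(w)})(2+\sqrt{\delta_n(w)})$, the algebra collapses to the clean closed form
$$J_n(y) = \frac{2}{(2+\sqrt{\delta_n(w)})\,\sqrt{\delta_n(w)}}.$$

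Since $\sqrt{\delta_n(w)}$ is the principal square root, $\mathrm{Re}(\sqrt{\delta_n(w)})\ge 0$, hence $|2+\sqrt{\delta_n(w)}|\ge 2$ and $|J_n(y)|\le C/|\sqrt{\delta_n(w)}|$. From \eqref{eq:tournevis2} one has
$$|\sqrt{\delta_n(w)}|^{2} \;=\; 4|\sin(\pi y)|\;\big[(1-\gamma^{2}n^{-2b})^{2}\sin^{2}(\pi y)+4\gamma^{2}n^{-2b}\big]^{1/2}.$$
For $n$ large enough that $(1-\gamma^{2}n^{-2b})^{2}\ge 1/4$ (which always holds eventually), the bracket is bounded below by $c(\sin^{2}(\pi y)+n^{-2b})$, and the elementary inequality $(n^{-b}+|\sin(\pi y)|)^{2}\le 2(n^{-2b}+\sin^{2}(\pi y))$ then gives $|\sqrt{\delta_n(w)}|\ge c'|\sin(\pi y)|^{1/2}(n^{-b}+|\sin(\pi y)|)^{1/2}$. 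The announced bound follows.

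The main obstacle is purely algebraic: simplifying the sum of the two residues, using the relations among $z_\pm$, $a_n(w)$, $w$ and $\delta_n(w)$, into the compact expression for $J_n(y)$ above. There is no new analytic input; this lemma is the \emph{unweighted} sibling of Lemma~\ref{lem:in18}, and the absence of the vanishing factors $(1-e^{-2i\pi x})$ or $(1-\bar w)$ that appeared in the definitions of $I_n$ and $\widetilde I_n$ is exactly what produces the additional $|\sin(\pi y)|^{-1/2}$ singularity at $y=0$ in the final bound.
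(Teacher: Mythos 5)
Your proof is essentially the same in spirit as the paper's: both evaluate $J_n$ by contour integration via the factorization $\Lambda - i\gamma n^{-b}\Omega = -a_n(w)P_w(z)/z$, pick up the residues at $z=0$ (double pole) and $z=z_-$, and reduce the estimate to controlling $|\delta_n(w)| = 4|1-wa_n^2(w)/4|$ through \eqref{eq:tournevis2}. The paper, however, stops its algebra at $J_n(y) = {\mc H}_w(a_n(w)/2)$ and then runs a three-way case analysis in $w$ (near $1$, near $-1$, and bounded away from $\pm 1$), invoking \eqref{eq:marteau1}--\eqref{eq:marteau2} in the critical case. You push the simplification one step further, to the closed form $J_n(y) = 2/[(2+\sqrt{\delta_n(w)})\sqrt{\delta_n(w)}]$, and then the bound follows at once from $\mathrm{Re}\sqrt{\delta_n(w)}\ge 0$, so $|2+\sqrt{\delta_n(w)}|\ge 2$, plus the explicit modulus $|\delta_n(w)|$. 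I checked the algebra: writing $wa_n^2(w) = (2-\sqrt{\delta_n(w)})(2+\sqrt{\delta_n(w)})$, your expression does collapse as claimed, and your formula for $|\sqrt{\delta_n(w)}|^2$ follows correctly from \eqref{eq:tournevis2}. This is a cleaner route and avoids the case analysis entirely; it also makes transparent your closing remark that $J_n$ has the $|\sin(\pi y)|^{-1/2}$ singularity precisely because it lacks the $(1-\bar w)$ or $(1-e^{-2i\pi x})$ prefactors of $I_n$ and $\widetilde I_n$.

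One small flaw: your parenthetical ``(which always holds eventually)'' justifying $(1-\gamma^2 n^{-2b})^2\ge 1/4$ is false when $b=0$ (and the lemma is stated for $b\ge 0$), since then the quantity is the fixed constant $(1-\gamma^2)^2$, which can be smaller than $1/4$ (for instance $\gamma=1$ gives $0$). This is easily repaired without changing your conclusion: when $(1-\gamma^2 n^{-2b})^2<1/4$ one has $\gamma^2 n^{-2b}>1/2$, so $n^{-2b}>1/(2\gamma^2)$, and then $\sin^2(\pi y) + n^{-2b} \le 1 + n^{-2b} \le (1+2\gamma^2)n^{-2b}$, while the bracket is at least $4\gamma^2 n^{-2b}$; so the bracket is still $\ge c(\sin^2(\pi y)+n^{-2b})$ with $c$ depending only on $\gamma$. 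With that patch the argument is complete and correct for all $b\ge 0$.
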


\begin{proof}
We have that
\begin{equation}
J_n (y)= - \cfrac{1}{(1+\bar w)+\gamma n^{-b} (1-\bar w)}\, \cfrac{1}{2i \pi} \oint_{\mc C} k_w (z)   \, dz
\end{equation}
with
\begin{equation}
k_{w} (z)=\cfrac{(z^2 +w)}{z^2 {P}_w (z)}
\end{equation}
It follows that
\begin{equation*}
\begin{split}
J_n (y)&= -  \cfrac{1}{(1+\bar w)+\gamma n^{-b} (1-\bar w)} \; \left\{ {\rm{Res}}(k_w,0)  +{\rm{Res}}(k_w,z_-) \right\}\\
&=  \cfrac{1}{(1+\bar w)+\gamma n^{-b} (1-\bar w)} \; \left\{ \cfrac{{z}_+ +{z}_-}{w} -\cfrac{1}{{z}_-} \; \cfrac{{z}_+ + {z}_-}{{z}_+ -{z}_-} \right\}\\
&= {\mc H}_w \Big(\tfrac{{a_n} (w)}{2} \Big)
\end{split}
\end{equation*}
with
$${\mc H}_w (u) = \cfrac{1}{u} \left\{ \cfrac{2}{w u} - \cfrac{1}{\sqrt{1-w u^2}} \, \cfrac{u}{1- \sqrt{1- wu^2} }\right\}, \quad |u| <1.$$
Since uniformly in $w$ we have ${\mc H}_w (u) = {\mc O} (1)$ for $u \to 0$, we have only to study the behavior of ${\mc H}_w \Big(\tfrac{{a_n} (w)}{2} \Big)$ for $w$ close to $1$, which implies $a_n (w) /2$ close to $1$ by recalling (\ref{eq:marteau1}). Therefore, for say $|w-1| \le \ve$ with $\ve>0$ small, we have
\begin{equation}
\Big| {\mc H}_w \Big(\tfrac{{a_n} (w)}{2} \Big) \Big| \le C_\ve \left[ 1+ \cfrac{1}{\sqrt{\Big|1- w \tfrac{a_n^2 (w)}{4} \Big|}}\right].
\end{equation}
Taking into account (\ref{eq:marteau2}) we get the claim.
\end{proof}

%

\begin{lemma}
\label{lem:K_nint}
Let $b\ge 0$. We have that for any $y \in [-1/2, 1/2]$
$$| K_n (y) | \le C \; \cfrac{|\sin (\pi y)|^{1/2}}{\sqrt{n^{-b} + |\sin (\pi y)|}} $$
and
$$| {\widetilde K}_n (y) | \le C\; \cfrac{|\sin (\pi y)|^{-1/2}}{\sqrt{n^{-b} + |\sin (\pi y)|}}.$$
\end{lemma}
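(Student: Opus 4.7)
The plan is to follow the residue-calculus template already used in Lemmas \ref{lem:G0345}, \ref{lem:in18}, and \ref{lem:j_nint}. First I would perform the substitution $z = e^{2i\pi x}$ (so $dx = dz/(2i\pi z)$), which, using the factorization $\Lambda(y-x,x) - i\gamma n^{-b}\Omega(y-x,x) = -a_n(w) P_w(z)/z$ with $w = e^{2i\pi y}$, turns the integrals defining $K_n$ and $\widetilde K_n$ into contour integrals over the unit circle $\mc C$. Simplifying the numerators (using $e^{2i\pi(y-x)} + e^{2i\pi x} = z^{-1}(z^2 + w)$ for $\widetilde K_n$, together with the additional factor $e^{-2i\pi x} - 1 = (1-z)/z$ for $K_n$), one obtains rational contour integrals whose integrands have poles only at $z = 0$ and at the two roots $z_\pm$ of $P_w$.

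Second, I would apply the residue theorem, noting that only $z = 0$ and $z = z_-$ lie inside $\mc C$. Using the identities $z_- z_+ = w$, $z_- + z_+ = 4/a_n(w)$, and $z_+ - z_- = 2\sqrt{\delta_n(w)}/a_n(w)$, together with $z_-^2 + w = z_-(z_- + z_+)$, the residues combine into the closed forms
\begin{equation*}
\widetilde K_n(y) = -\frac{1}{a_n(w)}\left[1 - \frac{2}{\sqrt{\delta_n(w)}}\right], \qquad K_n(y) = \frac{2 - a_n(w)}{a_n^2(w)}\left[1 - \frac{2}{\sqrt{\delta_n(w)}}\right].
\end{equation*}
The key algebraic point for $K_n$ is that, after combining the residues at $0$ and $z_-$ and invoking the identity $1 - z_- = (a_n(w) - 2 + \sqrt{\delta_n(w)})/a_n(w)$, the bracket $1 - 2(1 - z_-)/\sqrt{\delta_n(w)}$ factors as $\tfrac{a_n(w) - 2}{a_n(w)}\bigl(1 - 2/\sqrt{\delta_n(w)}\bigr)$; this extra factor $(2 - a_n(w))$ is what will later upgrade the numerator of the $K_n$ bound from $|\sin(\pi y)|^{-1/2}$ to $|\sin(\pi y)|^{+1/2}$.

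Third, I would analyze these closed forms by reusing the uniform estimates \eqref{eq:marteau1} and \eqref{eq:marteau2} already established in the proof of Lemma \ref{lem:in18}, splitting $[-1/2, 1/2]$ into three regimes according to whether $w$ is bounded away from $\pm 1$, close to $-1$, or close to $+1$. When $w$ is bounded away from both $\pm 1$, all factors in the closed forms are uniformly $O(1)$. For $w$ near $-1$ (i.e.\ $y$ near $\pm 1/2$), $1/a_n(w) = O(n^b)$ is large but a Taylor expansion of $1 - 1/\sqrt{1 - w a_n^2(w)/4}$ around $a_n(w) = 0$ shows the bracket vanishes like $a_n^2(w) = O(n^{-2b})$, keeping both $K_n$ and $\widetilde K_n$ of order $O(1)$; this matches the target bounds, since $|\sin(\pi y)| \asymp 1$ there. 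For $w$ near $+1$ (i.e.\ $y$ near $0$), $a_n(w)$ stays bounded away from $0$ while $|\delta_n(w)|^{1/2} \asymp \sqrt{|1-w|(|1-w| + n^{-b})}$ by \eqref{eq:marteau2}, so $|1 - 2/\sqrt{\delta_n(w)}| \asymp 1/\sqrt{|1-w|(|1-w| + n^{-b})}$; combined with $|2 - a_n(w)| \asymp |1-w| \asymp |\sin(\pi y)|$ in the case of $K_n$ (and the absence of such a factor for $\widetilde K_n$), this produces exactly the claimed estimates. The main obstacle I anticipate is the algebraic manipulation in the second step: isolating the cancellation that yields the $(2 - a_n(w))$ prefactor for $K_n$ is essential, since without it a direct residue computation leaves an expression whose size near $y = 0$ is not manifestly the one the lemma claims.
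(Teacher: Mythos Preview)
Your approach is correct and would yield the stated bounds, but it is considerably more work than what the paper actually does. The paper observes that once you write $K_n$ and $\widetilde K_n$ as contour integrals, the integrands are \emph{exactly} the functions $g_w$ and $h_w$ from the proof of Lemma~\ref{lem:in18}, only with a different prefactor: comparing the definitions of $I_n,\widetilde I_n$ (denominator $\gamma^{-1}n^b\Lambda-i\Omega$) with those of $K_n,\widetilde K_n$ (denominator $\Lambda-i\gamma n^{-b}\Omega=\gamma n^{-b}(\gamma^{-1}n^b\Lambda-i\Omega)$) gives directly
\[
K_n(y)=-\tfrac{n^b}{\gamma(1-\bar w)}\,I_n(y),\qquad \widetilde K_n(y)=\tfrac{n^b}{\gamma(1-\bar w)}\,\widetilde I_n(y).
\]
Since $|1-\bar w|\asymp|\sin(\pi y)|$, the bounds of Lemma~\ref{lem:in18} immediately transfer to the claimed bounds on $K_n$ and $\widetilde K_n$, with no new residue computation and no regime analysis. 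Your route, by contrast, recomputes the residues from scratch and then redoes the three-regime analysis; this is self-contained and produces explicit closed forms, but it duplicates work already done.

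One small algebraic slip to flag: your closed form for $K_n$ is not quite right. Carrying out the residue sum for $g_w$ and factoring gives
\[
K_n(y)=\frac{w\,a_n(w)-2}{w\,a_n(w)^2}\left[1-\frac{2}{\sqrt{\delta_n(w)}}\right],
\]
not $(2-a_n(w))/a_n(w)^2$ times the bracket. The discrepancy is harmless for the lemma, since $w a_n(w)-2=(w-1)(1+\gamma n^{-b})$ and $2-a_n(w)=(1-\bar w)(1-\gamma n^{-b})$ have the same modulus up to a bounded factor, so both are $\asymp|\sin(\pi y)|$ near $y=0$; but you should correct the formula if you keep this argument.
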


\begin{proof}
We have that
\begin{equation*}
K_n (y) =  \cfrac{1}{a_n (w)} \cfrac{1}{2i \pi}\oint_{\mc C}  g_{w} (z) dz 
\end{equation*}
where
$$g_w (z) = \cfrac{(z-1)(z^2 +w)}{z^2 P_{w} (z)}$$
has been introduced in (\ref{eq:gwin}) during the proof of Lemma \ref{lem:in18}. We have therefore that
\begin{equation}
K_n (y)= -\cfrac{n^b}{\gamma} \cfrac{1}{1- {\bar w}}\; I_n (y). 
\end{equation}
The estimate on $K_n$ follows from Lemma \ref{lem:in18}.

Similarly we have that
\begin{equation*}
{\widetilde K_n} (y) =  -\cfrac{1}{a_n (w)} \cfrac{1}{2i \pi}\oint_{\mc C}  h_{w} (z) dz 
\end{equation*}
where
$$h_w (z) = \cfrac{z^2 +w}{z P_{w} (z)}$$
has been introduced in (\ref{eq:gwin2}) during the proof of Lemma \ref{lem:in18}. We have therefore that
\begin{equation}
{\widetilde K_n} (y)=\cfrac{n^b}{\gamma} \cfrac{1}{1- {\bar w}}\; {\widetilde I}_n (y). 
\end{equation}
The estimate on $\widetilde K_n$ follows from Lemma \ref{lem:in18}.

\end{proof}

\bibliographystyle{plain}


\end{document}